\definecolor{ddarkbrown}{rgb}{0.5,0.2,0.05} 
\definecolor{bbluegray}{rgb}{0.05,0,0.5}
\definecolor{mygreen}{RGB}{28,172,0}
\definecolor{mylilas}{RGB}{170,55,241}
\newtheorem{theorem}{Theorem}[section]
\newtheorem{proposition}[theorem]{Proposition}
\newtheorem{definition}[theorem]{Definition}
\newtheorem{lemma}[theorem]{Lemma}
\newtheorem{assumption}[theorem]{Assumption}
\newtheorem{example}[theorem]{Example}
\newtheorem{remark}{Remark}
\newcommand{\innp}[1]{\langle #1 \rangle}
\newcommand{\norm}[1]{\| #1 \|} 
\newcommand{\TxM}{T_x\mathcal{M}}
\newcommand{\TyM}{T_y\mathcal{M}}
\newcommand{\TgtM}{T_{\gamma(t)}\mathcal{M}}
\newcommand\curvtensor{\mathfrak{R}}
\newcommand{\defi}{\stackrel{\mathrm{\scriptscriptstyle def}}{=}}
\newcommand{\abs}[1]{\lvert #1 \rvert} 
\newcommand\M{\mathcal{M}}
\newcommand\expon[1]{\operatorname{Exp}_{#1}}
\newcommand\exponinv[1]{\operatorname{Exp}^{-1}_{#1}} 
\newcommand\ball{\bar{B}}
\newcommand\deltar[1]{\delta_{#1}}
\newcommand\zetar[1]{\zeta_{#1}}
\newcommand*\kmin{\kappa_{\min}}
\newcommand*\kmax{\kappa_{\max}}
\newcommand\B{\mathcal{B}}
\newcommand{\bigo}[1]{O( #1 )}
\newcommand{\bigotilde}[1]{\widetilde{O}( #1 )}
\newcommand{\argmin}{\mathop{\rm argmin}}
\newcommand\blfootnote[1]{%
  \begingroup
  \renewcommand\thefootnote{}\footnote{#1}%
  \addtocounter{footnote}{-1}%
  \endgroup
}
\def\isArxiv{1}
\begin{document}

\title{Strong Convexity of Sets in Riemannian Manifolds}
\date{}

\author{Damien Scieur$^{\ast,\ddagger}$, David Martínez-Rubio$^{\ast,\mathsection}$, Thomas Kerdreux $^{\ast,\mathsection}$,\\ Alexandre d'Aspremont$^{\diamond}$, Sebastian Pokutta$^{\mathsection}$. \blfootnote{Equal Contribution (randomized order).\\
\indent$^\ddagger$Samsung SAIL Montreal, Canada.\\
\indent$^\diamond$CNRS \& D.I. \'Ecole Normale Sup\'erieure, Paris, France.\\
\indent$^\mathsection$Zuse Institute Berlin \& Technische Universit{\"a}t, Berlin, Germany.}}

\maketitle

    \abstract{Curvature properties of convex objects, such as strong convexity, are important in designing and analyzing convex optimization algorithms in the Hilbertian or Riemannian settings. In the case of the Hilbertian setting, strongly convex sets are well studied. Herein, we propose various definitions of strong convexity for uniquely geodesic sets in a Riemannian manifold. We study their relationship, propose tools to determine the geodesic strongly convex nature of sets, and analyze the convergence of optimization algorithms over those sets. In particular, we demonstrate that the Riemannian Frank-Wolfe algorithm enjoys a global linear convergence rate when the Riemannian scaling inequalities hold.}

{
    \setcounter{tocdepth}{1}
    \small
    \tableofcontents
}

\section{Introduction}
\textit{Strong convexity} is a fundamental property characterizing convex objects such as normed spaces, functions, or sets. Algorithms can leverage strong (or uniform) convexity structures in optimization and learning problems to accelerate convergence, enhance generalization bounds, and strengthen concentration properties.

For instance, uniform convexity (that interpolates between plain and strong convexity) has a direct effect on the concentration inequalities of vector-valued martingales when measured via this space norm \citep{pisier1975martingales,pisier2011martingales}.
Also, assuming strong convexity for objective functions has become standard for studying optimization algorithm performance in Hilbert spaces and proving generalization bounds.
Similarly, the strong convexity of sets plays a role in optimization algorithms \citep{demyanov1970,garber2015faster,Molinaro20,kerdreux2021projection,kerdreux2021affine} or learning algorithms, e.g., to characterize action sets in online learning \citep{huang2017following,Molinaro20,kerdreux2021local,dekel2017online,bhaskara2020online,bhaskara2020onlineMany} or bandit algorithms \citep{bubeck2018sparsity,kerdreux2021linear}.
These notions have a well-understood and well-exploited generalization in normed space settings  \citep{goncharov2017strong}.

However, many machine-learning problems are being formulated and solved beyond normed spaces, e.g., metric spaces and Riemannian manifolds.
For instance, Sinkhorn divergences \citep{cuturi2013sinkhorn,feydy2019interpolating} are regularized formulations of optimal transport distances \citep{villani2009optimal}. Those methods approximate the optimal transport distance, breaking some computational bottlenecks.
This has resulted in modeling many problems using these geometries over probability spaces, and hence, metric spaces.
Similarly, some progress has recently been made in optimization algorithms on Riemannian manifolds \citep{sun2016complete,allen2018operator,hosseini2019alternative,hosseini2020recent}.
The definition of these optimization problems on these structures produces local-global properties when identifying geodesically convex problems that may be non-convex.

Unfortunately, the notion of strong convexity in these more complex frameworks is more difficult to formalize.
Several definitions of strong convexity for metric spaces have been developed, such as those for the most notable \textit{non-positively curved}, also known as $\text{CAT}(0)$ spaces \citep{Alek57}. 
Those spaces require the square of the distance function to be geodesically strongly convex.
Similar results, such as those in the Hilbertian case, have been obtained with concentration results \citep{gouic2019fast,ahidar2020convergence,merigot2020quantitative} or in online learning \citep{Paris20,Paris21}.
Nevertheless, the situation is not as well-behaved as in normed spaces, as several non-equivalent notions emerge when describing the problem structures.

\textbf{Motivation.} There are multiple consequences of analyzing strongly convex sets defined in manifolds. In the Euclidean case, such an analysis made the identification of the strong convexity of sets easier: sublevel sets of smooth, strong convex functions are strongly convex sets. In addition, the Frank--Wolfe (FW) algorithm \cite{braun2022conditional} converges faster over strongly convex sets. Generalizing the notion of strong convexity for functions in metric spaces is relatively straightforward (\cref{def:strongly_cvx_geo_fct}). 
However, to our knowledge, this notion has not been studied in the case of sets. Therefore, we propose to fill this gap in the context of a Riemannian manifold.

\textbf{Contributions.} We introduce novel structural properties of sets defined in Riemannian manifolds, provide examples, and demonstrate improved algorithmic performance when minimizing functions over those sets.

\begin{enumerate}
    \item In \cref{sec:geodesic_strong_convexity}, we introduce different definitions of strong convexity of a set in Riemannian manifolds. 
    Each definition relies on two perspectives of uniquely geodesic subsets, either seen as a geodesic metric space or a smooth manifold. 
    \item We establish some relationships between the aforementioned various definitions of strongly convex sets in \cref{sec:relation_between_definitions} and introduce the notion of approximate scaling inequality and its link with what we call \textit{double geodesic strong convexity} property (\cref{def:double_geodesic_strong_convexity}) in \cref{sec:double_geodesic_convexity}.
    \item In \cref{sec:level_set_geodesically}, we prove that sublevel sets of geodesically smooth, strongly convex functions are strongly convex in Riemannian manifolds under mild conditions.
    Similar results have been obtained in the case of Hilbert spaces \citep{journee2010generalized,garber2015faster}.
    This perspective is valuable for identifying strongly convex sets in Riemannian manifolds.
    \item We also provide examples for the strongest of our notions of Riemannian strong geodesic convexity. Hence, those examples also satisfy all other other notions, cf. \cref{ex:riemannian_strongly_convex_sets}.
    \item In \cref{sec:FW_geodesically_strongly_convex_set}, we derive a global linear convergence bound for the Riemannian FW algorithm when the constraint set satisfies the \textit{Riemannian scaling inequality} (\cref{def:riemannian_scaling_inequality}).
\end{enumerate}

\section{Preliminaries and Notations}\label{sec:preliminaries}
Our primary goal is defining several notions of strong convexity for sets in metric spaces that are well-suited for analyzing and designing optimization algorithms.
In this study, we first focus on the case of complete and connected $n$-dimensional Riemannian manifolds.
This setting combines a simple manifold structure, e.g., $n$-dimensional tangent spaces with scalar products, with a \textit{geodesically metric space} structure. 
We now recall some useful concepts.

\subsection{Geodesic Metric Spaces}
\paragraph*{Metric space} A \textit{metric space} is a pair $(\mathcal{M}, d_{\mathcal{M}})$, where $\mathcal{M}$ is a set, and $d_{\mathcal{M}}:\mathcal{M}\times\mathcal{M}\rightarrow \mathbb{R}^+$ is a distance function (the metric), i.e., it satisfies positivity, symmetry, and the triangle inequality.

\paragraph*{Geodesics} A geodesic $\gamma(t):[0,1]\rightarrow \mathcal{M}$ between two points $x,\,y\in\mathcal{M}$ is a smooth curve such that $\gamma(0)=x$, $\gamma(1)=y$, and $\gamma''(t)=0$.
A metric set is a \textit{geodesic metric space} if, for all $x,y\in\mathcal{M}$, there exists a geodesic that connects $x$ and $y$. When this geodesic is unique, the set is considered uniquely geodesic.

\paragraph*{Cartan--Hadamard spaces} In particular, \textit{Cartan--Hadamard spaces} are geodesic metric spaces generalizing Hilbert spaces which are well-suited for convex optimization purposes \citep{jost2012nonpositive,bridson2013metric,bacak2014convex,bacak2018old}.
By definition, a \mbox{Cartan--Hadamard} space $\mathcal{M}$ (a.k.a. \textit{CAT(0) space} or \textit{space of non-positive curvature}) is a geodesically metric space that is \textit{non-positively curved}. Non-positively curved spaces have the particularity that, for all geodesics $\gamma$ and for all elements $x\in\mathcal{M}$, the metric $d_{\mathcal{M}}$ satisfies (e.g., \citep[Corollary 2.5.]{Paris21})
\begin{equation}\label{eq:non_positive_curvature}\tag{non-positive curvature}
    d_{\mathcal{M}}(x,\gamma(t))^2\leq (1-t) d_{\mathcal{M}}(x,\gamma(0))^2 + t d_{\mathcal{M}}(x, \gamma(1))^2 - t (1-t) d_{\mathcal{M}}(\gamma(0),\gamma(1))^2.
\end{equation}
This non-positive curvature indicates that the function $f(\cdot)=d_{\mathcal{M}}(\cdot, x)^2$ is geodesically strongly convex (see \cref{def:strongly_cvx_geo_fct}) for all reference point $x\in\mathcal{M}$.

\subsection{Riemannian Manifolds}

In the case of Banach spaces, wherein the strong convexity has been crucial to algorithm analysis, significantly fewer tools have been developed for directly characterizing sets that appear in geodesic metric spaces and, therefore, in Riemannian manifolds. Those tools are needed to analyze online learning algorithms or some constrained optimization algorithms. We recall some key concepts below.

Throughout this paper, we consider an $n$-dimensional Riemannian manifold $\mathcal{M}$, such as the Stiefel manifold $\operatorname{St}(n,p)$ for $p < n$; the group of rotations $\operatorname{SO}(n)$; hyperbolic spaces; spheres; or the manifold of symmetric positive definite matrices. 

\paragraph*{Connected manifolds} A manifold $\mathcal{M}$ is \textit{connected} if, for all $x,y \in \mathcal{M}$, there exists a continuous path joining those points. An $n$-dimensional manifold is a topological space that is locally homeomorphic to the $n$-dimensional Euclidean space.

\paragraph*{Tangent space} For a point $x\in\mathcal{M}$, a tangent vector is the tangent of a parameterized curve passing through $x$, and all the tangent vectors at $x$ form the tangent space $\TxM $.

\paragraph*{Metric tensor and inner product} A Riemannian manifold $(\mathcal{M}, g_x)$ is such that the Riemannian metric $g$ is a $C^{\infty}$ metric tensor, where for all point $x\in\mathcal{M}$, $g_x$ defines a positive definite inner product in the tangent space $\TxM $, and $x\mapsto g_x$ is $C^{\infty}$.
For $u,v\in\mathcal{M}$, we write $g_x(u,v):=\langle u;v\rangle_x$, and $\|u\|_x : = \langle u ; u \rangle_x$. 

\paragraph*{Complete Riemannian manifold and Riemannian metric} A connected Riemannian manifold is also a metric space.
Furthermore, the length of a continuous piecewise smooth path $\gamma: [0,1]\rightarrow \mathcal{M}$ is defined as $L(\gamma):=\int_{0}^{1}\|\gamma^\prime(t)\|_{\gamma(t)}dt$.
We denote $\mathcal{P}_{x,y}$ as the set of continuous piecewise smooth paths joining $x$ and $y$. We have the \textit{Riemannian metric} $d_{\mathcal{M}}(x,y) := \inf_{\gamma\in\mathcal{P}_{x,y}}L(\gamma)$ as a distance \citep[Theorem 10.2]{boumal2020introduction} and ($\mathcal{M},d_{\mathcal{M}})$ is a metric space.
We refer to \textit{complete manifold} as a manifold that is complete as a metric space.
Moreover, \citep[Theorem 10.8]{boumal2020introduction} states that any connected and complete Riemannian manifold is a geodesic metric space.
In particular, the infimum in the definition of the Riemannian metric is attained at a geodesic (w.r.t. $d_{\mathcal{M}}$) called the \textit{minimizing geodesic}.

\subsection{Sets in Riemannian Manifolds}

\textbf{Importance of sets when optimizing on Manifolds.} When $\mathcal{M}$ is a connected compact Riemannian manifold, and $f$ is a geodesically convex function on $\mathcal{M}$ (\cref{def:strongly_cvx_geo_fct}), then $f$ is constant \citep[Corollary 11.10]{boumal2020introduction}. 
This result motivates the optimization of geodesically convex functions over subsets $\mathcal{C}$ of these manifolds. 

\paragraph*{Assumptions for Sets in Riemannian Manifolds}

In this paper, we will make the assumption that we are working on a subset $\mathcal{C}\subset \mathcal{M}$ that is \emph{uniquely geodesically convex}, defined as the set $\mathcal{C}$ being both convex and uniquely geodesic.

\begin{definition}[Geodesic Convex Closed Subset of a Manifold] 
We say that a closed subset $\mathcal{C}$ of a manifold $\mathcal{M}$ is convex if for any two points $x, y \in \mathcal{C}$, there exists a geodesic from $x$ to $y$ that is distance minimizing and is contained in $\mathcal{C}$.
\end{definition}

\begin{definition}[Uniquely Geodesic Subset of a Manifold] 
We say that a subset $\mathcal{C}$ of a manifold $\mathcal{M}$ is uniquely geodesic if for every two points $x, y \in \mathcal{C}$, there is only one geodesic between $x$ and $y$ that is contained in $\mathcal{C}$.
\end{definition}

\begin{assumption}
    The set $\mathcal{C}\subseteq\mathcal{M}$ is compact, convex, and uniquely geodesic. 
\end{assumption}

For instance, open hemispheres and Cartan--Hadamard manifolds, that is, complete simply connected manifolds of non-positive sectional curvature everywhere, are uniquely geodesic. Hence, their compact counterparts are geodesically convex subsets. However, a closed hemisphere is not uniquely geodesic.

The \textit{exponential map} parameterizes the manifold by mapping vectors in the tangent spaces to $\mathcal{M}$.
For $x\in\mathcal{M}$, it is in general a local diffeomorphism; however, when $\mathcal{M}$ is complete \citep{hopf1931begriff}, it is defined on the whole tangent space as follows.

\begin{definition}[Exponential Map]
Let $\mathcal{M}$ be a Riemannian manifold. For all $x\in\mathcal{M}$ and $v\in \TxM $, let the geodesic $\gamma:[0,1]\rightarrow \mathcal{M}$ satisfying $\gamma(0)=x$ with $\gamma'(0)=v$. The exponential map at x, $\operatorname{Exp}_x:\TxM \rightarrow\mathcal{M}$, is defined as $\operatorname{Exp}_x(v): =\gamma(1)$.
\end{definition}

\begin{remark}[Bijective Exponential Map and Logarithmic Map in the Right Domain]\label{remark:inverse_exponential_map}
    For uniquely geodesically convex sets $\mathcal{C}$ and $x \in \mathcal{C}$, define the set 
\[
\mathcal{C}_x\defi \{v \in T_x\mathcal{M}: t \mapsto \operatorname{Exp}_x(tv), \text{ for } t \in [0, 1], \text{ is minimizing and } \operatorname{Exp}_x(v) \in \mathcal{C} \},
\] 
    then, the exponential map from $x$ restricted to this set is well defined and bijective in $C$, that is, $\operatorname{Exp}_x(y) : \mathcal{C}_x \rightarrow \mathcal{C}$ is a bijective function. In this work we will always refer to the inverse of the exponential map, or logarithmic map, as the inverse of this restriction: $\operatorname{Exp}_x^{-1}: \mathcal{C} \to \mathcal{C}_x$. Note that generally in the literature the logarithmic map is not defined using this restriction.
\end{remark} 

The exponential map links the manifold and its tangent spaces, but vectors belonging to different tangent spaces are not directly comparable. Using the Levi-Civita connection, we can define a parallel transport operator between two points $x,y\in\M$ in the manifold \citep{ambrose1956parallel}.

\begin{definition}[Parallel Transport Operator]
For all $x,y\in\M$, the parallel transport map $\Gamma_x^y:\TxM \rightarrow \TyM $ combines vectors from different tangent spaces by transporting them along geodesics and such that $\langle \Gamma_x^y u; \Gamma_x^y v \rangle_y = \langle u; v\rangle_x$ for all $(u,v)\in \TxM $.
\end{definition}

\subsection{Functions over Manifolds} 

We first define the Riemannian gradient of $f$ at $x$ as the unique element $\nabla f(x) \in \TxM $, such that the directional derivative $Df(x)[v] =  \langle v; \nabla f(x) \rangle_x$ for all  $v\in \TxM $.

We now recall the notion of relative geodesic (strong) convexity and smoothness for a function \mbox{$f:\mathcal{M}\rightarrow\mathbb{R}$} w.r.t. a distance function $d:\mathcal{M}\times \mathcal{M} \rightarrow \mathbb R^+$ (see, for instance, \citep{zhang2016first}).

\begin{definition}[Distance function] A function $d:\mathcal{M}\times\mathcal{M}\rightarrow\mathbb{R}^+$ is called a distance function if it satisfies positivity, symmetry, and the triangle inequality.
\end{definition}

\begin{remark}
The distance $d$ could differ from the distance function $d_{\mathcal{M}}$ associated with the manifold $\mathcal{M}$.
\end{remark}

\begin{definition}[Geodesic (Strong) Convexity]\label{def:strongly_cvx_geo_fct}
    Let $\mathcal{C}$ be a uniquely geodesic set. A function $f:\mathcal{C}\subseteq\mathcal{M}\rightarrow\mathbb{R}$ \emph{is geodesically $\mu$-strongly convex} in $\mathcal{C}$ (resp. convex if $\mu=0)$ w.r.t. $d$ if, for all $x,y\in\mathcal{C}$, we have
\begin{equation}\label{eq:strongly_cvx_geo_fct}
\forall t\in[0,1],\;\;f(\gamma(t)) \leq (1-t)f(x) + t f(y) - \tfrac{\mu}{2} t (1-t) d^2(x,y),\;\;\mu\geq 0.
\end{equation}
\end{definition}

\begin{definition}[Geodesic Smoothness]\label{def:geodesic_smoothness_fct}
    Let $\mathcal{C}$ be a uniquely geodesic set. A function $f:\mathcal{C}\subseteq\mathcal{M}\rightarrow\mathbb{R}$ is \emph{geodesically $L$-smooth} in $\mathcal{C}$ w.r.t. $d$ if, for all $x, y\in\mathcal{C}$, we have
\[
    \abs{f(y) -  f(x) - \langle \nabla f(x); \operatorname{Exp}^{-1}_x(y) \rangle_x} \leq  \tfrac{L}{2}d^2(x,y),\;\; L\geq 0.
\]
\end{definition}
Note that when the function is also geodesic (strongly) convex, the smoothness condition can be simplified by removing the absolute value. Indeed, (strong) convexity implies that the argument of the absolute value is nonnegative.

Finally, we state the following assumption on the relation between the distance function used to define smoothness and strong convexity and the distance function associated with the manifold $\mathcal{M}$. This assumption allows us to use distances whose value is within some constant factors from the Riemannian distance $\norm{\exponinv{x}(y)}$, which corresponds to the definition below with $\ell_\M = L_\M = 1$.

\begin{assumption}[Distances Equivalence]\label{ass:equivalence_distance_exponential}
Let $\mathcal{M}$ be a Riemannian manifold and let $d:\M\times\M\to\mathbb{R}$ be a distance function, possibly different from the Riemannian distance.
There exists $0< l_{\mathcal{M}}\leq L_\mathcal{M}$ such that for all pair $(x,y)\in\mathcal{M}^2$,
\begin{equation}\label{eq:regularity_exponential_map}
\ell_{\mathcal{M}} \|\operatorname{Exp}^{-1}_x(y)\|_x \leq d(x,y) \leq L_{\mathcal{M}}\|\operatorname{Exp}^{-1}_x(y)\|_x.
\end{equation}
\end{assumption}

\section{Strong Convexity of Sets in Riemannian Manifolds}\label{sec:geodesic_strong_convexity}

This section provides several variants of the definition of a strongly convex set in Riemannian manifolds. Some variants are instrumental in analyzing algorithms or proving a set's strongly convex nature.

\subsection{Strong Convexity in Hilbert Spaces}
Many equivalent characterizations of a set's strong convexity exist in Hilbert spaces \citep{goncharov2017strong}. We recall two characterizations in \cref{thm:equivalence_euclidean}. Intuitively, a set is called strongly convex if, for all straight lines in the set and all points $p$ in the line, there exists a ball around $p$ of a certain radius that is contained in the set.

\begin{restatable}[Equivalent Notions of Strong Convexity of Sets in Hilbert Spaces \citep{goncharov2017strong}]{proposition}{equivalenceeuclidean}\label{thm:equivalence_euclidean}
    Consider a compact convex set $\mathcal{C} \subset \mathbb{R}^n$, $\alpha>0$, and a norm $\|x\| = \sqrt{\innp{x, x}}$. 

We say that the set $\mathcal{C}$ is $\alpha$-strongly convex w.r.t. $\|\cdot\|$ if and only if it satisfies the following equivalent assertions.\\
    (a) For all $(x,y)\in\mathcal{C}$, $t\in [0,1]$, and $z\in \mathbb{R}^n$ s.t. $\|z\|=1$, we have
    \begin{equation}\label{eq:strong_convexity_euclidean}
        (1-t) x + t y + \alpha (1-t) t \|x-y\|^2 z\in\mathcal{C}.
    \end{equation}
    (b) For all $(x,v)\in\mathcal{C}\times\partial\mathcal{C}$ and $w\in N_{\mathcal{C}}(v)$ (the normal cone of $\mathcal{C}$ at $x$), we have
    \begin{equation}\label{eq:scaling_inequality}\tag{scaling inequality}
        \langle w; v - x \rangle \geq \alpha \|w\| \|v - x\|^2.
    \end{equation}
\end{restatable}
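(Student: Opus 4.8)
The plan is to prove the two implications (a)$\Rightarrow$(b) and (b)$\Rightarrow$(a) separately. Both arguments use the Hilbert-space structure only through the Cauchy--Schwarz inequality and the variational characterization of the metric projection $\Pi_{\mathcal{C}}$ onto the compact convex set $\mathcal{C}$, namely $\innp{p-\Pi_{\mathcal{C}}(p),\, c - \Pi_{\mathcal{C}}(p)} \le 0$ for all $c\in\mathcal{C}$ (equivalently, $p - \Pi_{\mathcal{C}}(p)\in N_{\mathcal{C}}(\Pi_{\mathcal{C}}(p))$).

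For (a)$\Rightarrow$(b): fix $x\in\mathcal{C}$, $v\in\partial\mathcal{C}$ and $w\in N_{\mathcal{C}}(v)$. If $w=0$ the inequality in (b) is trivially $0\ge 0$, so assume $\norm{w}\ne 0$ and set $z\defi w/\norm{w}$, a unit vector. Applying (a) to the pair $(x,v)$ with this $z$ shows that $p_t\defi (1-t)x + tv + \alpha(1-t)t\norm{x-v}^2 z \in\mathcal{C}$ for every $t\in(0,1)$. Since $w\in N_{\mathcal{C}}(v)$, we get $\innp{w,\, p_t - v}\le 0$; expanding $p_t - v = (1-t)(x-v) + \alpha(1-t)t\norm{x-v}^2 z$, using $\innp{w,z}=\norm{w}$, and dividing by $(1-t)>0$ yields $\innp{w,\,x-v} + \alpha t\norm{x-v}^2\norm{w}\le 0$. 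Letting $t\to 1^-$ gives $\innp{w,\,v-x}\ge\alpha\norm{w}\norm{v-x}^2$, which is assertion (b).

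For (b)$\Rightarrow$(a): fix $x,y\in\mathcal{C}$, $t\in[0,1]$ and a unit vector $z$; the cases $t\in\{0,1\}$ or $x=y$ are immediate, so assume $t\in(0,1)$ and suppose toward a contradiction that $p\defi(1-t)x + ty + \alpha(1-t)t\norm{x-y}^2 z\notin\mathcal{C}$. Let $v\defi\Pi_{\mathcal{C}}(p)$; then $v\ne p$, so $v\in\partial\mathcal{C}$, and $w\defi p-v$ is a nonzero element of $N_{\mathcal{C}}(v)$ with $\innp{w,\,p-v}=\norm{w}^2$. Writing $p-v = (1-t)(x-v) + t(y-v) + \alpha(1-t)t\norm{x-y}^2 z$, invoking (b) for both $(x,v)$ and $(y,v)$ to lower-bound $\innp{w,\,v-x}$ and $\innp{w,\,v-y}$, and using $\innp{w,z}\le\norm{w}$, one obtains
\[
\norm{w}^2 \le -\alpha\,\norm{w}\left[(1-t)\norm{v-x}^2 + t\norm{v-y}^2 - (1-t)t\norm{x-y}^2\right].
\]
By the identity $(1-t)\norm{v-x}^2 + t\norm{v-y}^2 = \norm{v-m}^2 + (1-t)t\norm{x-y}^2$ with $m\defi(1-t)x+ty$ (a two-point Jensen gap for the squared norm), the bracketed quantity equals $\norm{v-m}^2$, so $\norm{w}/\alpha \le -\norm{v-m}^2 \le 0$, contradicting $\norm{w}>0$. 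Hence $p\in\mathcal{C}$, which is (a).

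The only genuinely delicate point is the (b)$\Rightarrow$(a) direction: one must recognize that the right object to feed into the scaling inequality is the metric projection $v=\Pi_{\mathcal{C}}(p)$ of the hypothetically excluded point $p$, and then that the quadratic combination of squared distances produced by the two applications of (b) collapses, via the parallelogram-type identity, to exactly $-\norm{v-m}^2$ — which is what forces the contradiction. Everything else, including the normalizations and the forward implication, is routine.
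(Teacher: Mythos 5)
Your proof is correct, but note that the paper itself does not prove this proposition at all: it is quoted as a known equivalence from Goncharov and Ivanov (cited as \citep{goncharov2017strong}), so there is no in-paper argument to compare against. Your two implications are sound and self-contained. For (a)$\Rightarrow$(b), testing the normal-cone inequality at the perturbed point $p_t$ with $z=w/\|w\|$, dividing by $1-t$, and letting $t\to 1^-$ gives exactly the scaling inequality with the same constant $\alpha$, and the degenerate cases ($w=0$, $x=v$) are handled. For (b)$\Rightarrow$(a), projecting the hypothetically excluded point $p$ onto $\mathcal{C}$, using $w=p-\Pi_{\mathcal{C}}(p)\in N_{\mathcal{C}}(\Pi_{\mathcal{C}}(p))$, applying (b) at both $x$ and $y$, and invoking the identity $(1-t)\|v-x\|^2+t\|v-y\|^2=\|v-m\|^2+t(1-t)\|x-y\|^2$ with $m=(1-t)x+ty$ indeed collapses the bound to $\|w\|^2\le-\alpha\|w\|\,\|v-m\|^2\le 0$, contradicting $w\ne 0$; I checked the identity and it is exact, so the contradiction is airtight. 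Two small remarks: you implicitly use that $\partial\mathcal{C}\subseteq\mathcal{C}$ (true here since $\mathcal{C}$ is compact, hence closed) so that (a) may be applied to the pair $(x,v)$, and you correctly read the statement's parenthetical ``normal cone of $\mathcal{C}$ at $x$'' as a typo for the normal cone at $v$; it would be worth stating both points explicitly. Your argument also preserves the constant $\alpha$ in both directions, which is slightly sharper than what the statement strictly requires and matches the cited source, where the equivalence is usually routed through intersections of balls of radius $R=1/(2\alpha)$ rather than the direct projection argument you give.
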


Going back to the original definitions in the literature, one can find these properties where $\alpha$ takes the value $1 /2R$, where $R > 0$ is the radius of some balls that are used for an alternative equivalent definition of strong convexity of sets \citep{vial1982strong,vial1983strong,goncharov2017strong}. We use $\alpha$ in this work since it is simpler for our purposes.
We also note that \eqref{eq:scaling_inequality} is equivalent to the following condition \citep[Lemma 2.1]{kerdreux2021projection}:
    \begin{equation}\label{eq:scaling_inequality2}
        \langle w; v - x \rangle \geq \alpha \|w\| \|v - x\|^2 \;\; \text{where}\;\; v \in \underset{z\in\mathcal{C}}{\operatorname{argmax}} \langle w; z\rangle.
    \end{equation}

\subsection{Strong Convexity of Sets in Riemannian Manifolds}

In this section, we propose several definitions that extend \eqref{eq:strong_convexity_euclidean} for Riemannian manifolds, which all collapse to the known strong convexity notion when $\mathcal{M}$ is the Euclidean space. We study their relationships and provide examples in \cref{ex:riemannian_strongly_convex_sets} for the strongest conditions; therefore, the examples apply to all of our definitions. 

\subsubsection{Geodesic Strong Convexity} 

\cref{def:geodesic_strong_convexity} adapts \eqref{eq:strong_convexity_euclidean} but relies on the geodesic metric structure of $\mathcal{M}$ only, without considering the Riemannian metric structure (i.e., the definition does not even use the tangent space). We refer to it as the \textit{geodesic strong convexity}.

\begin{definition}[Geodesic Strong Convexity]\label{def:geodesic_strong_convexity}
Let $\mathcal{M}$ be a Riemannian manifold, and let  $\mathcal{C}\subseteq\mathcal{M}$ be a uniquely geodesic set.
    The set $\mathcal{C}$ is geodesically $\alpha$-strongly convex w.r.t. the distance function $d:\mathcal{M}\times\mathcal{M}\rightarrow\mathbb{R}^+$ if, for every geodesic $\gamma$ joining $x,y\in\mathcal{C}$ and every $t\in[0,1]$, we have that the following ball is in $\mathcal{C}$:
\begin{equation}\label{eq:distance_geodesic_strong_convexity}
    \{z\in\mathcal{M} \mid d(\gamma(t), z) \leq \alpha t(1-t)d^2(x,y)\}\subseteq \mathcal{C}.
\end{equation}
\end{definition}

\subsubsection{Riemannian Strong Convexity} The following definition leverages the Riemannian structure of $\mathcal{M}$ via an assumption on the exponential map. The definition states that the inverse image of the set $\mathcal{C}\subset\mathcal{M}$ by the inverse exponential map at each $x\in\mathcal{M}$ must be strongly convex in $\TxM $ for all $x$ in the Euclidean sense. Recall that for a uniquely geodesic set $\mathcal{C}$, the inverse exponential map is always well defined for any two points in $\mathcal{C}$, cf. \cref{remark:inverse_exponential_map}.

\begin{definition}[Riemannian Strong Convexity]\label{def:riemannian_strong_convexity}
Let $\mathcal{M}$ be a Riemannian manifold, and $\mathcal{C}\subseteq\mathcal{M}$ be a uniquely geodesic set.
Then, $\mathcal{C}$ is a Riemannian $\alpha$-strongly convex set if, for all $x\in\mathcal{C}$, the set
\[
    \operatorname{Exp}_x^{-1}(\mathcal{C}) := \{ y \in \TxM : y = \operatorname{Exp}^{-1}_x(z),\;\;z\in\mathcal{C}  \}
\]
is $\alpha$-strongly convex w.r.t. $\|\cdot\|_x$ in the Euclidean sense \eqref{eq:strong_convexity_euclidean}.
\end{definition}

\subsubsection{Double Geodesic Strong Convexity} In \cref{def:double_geodesic_strong_convexity}, we now leverage the Riemannian structure through the exponential map to provide another notion of strong convexity of a set, analogous to the Euclidean formulation in \eqref{eq:strong_convexity_euclidean}.
For $t\in[0,1]$, we make the parallel between the term $t x + (1-t)y$ in \eqref{eq:strong_convexity_euclidean} and $\gamma(t)$, the geodesic $\gamma$ joining $y$, and $x$ in $\mathcal{M}$. 
Then, \eqref{eq:strong_convexity_euclidean} in $\mathcal{M}$ ensures that, for all $z\in \TgtM $ with unit norm $\norm{z}=1$, we have $\operatorname{Exp}_{\gamma(t)}\big(\alpha t(1-t)d^2(x,y)z\big) \in \mathcal{C}$.

\begin{definition}[Double Geodesic Strong Convexity]\label{def:double_geodesic_strong_convexity}
Let $\mathcal{M}$ be a Riemannian manifold equipped with a distance function $d(\cdot,\cdot)$, and let $\mathcal{C}\subseteq\mathcal{M}$ be a set that is uniquely geodesic.
    The set $\mathcal{C}$ is a double geodesically $\alpha$-strongly convex set w.r.t. $d(\cdot, \cdot)$ if, for every geodesic $\gamma$ joining $x,y\in\mathcal{C}$,
\begin{equation}\label{eq:strong_convexity_riemannian}
    \forall t\in[0,1],\;\;\forall z\in \TgtM :\|z\|_{\gamma(t)} \leq  \alpha t (1-t) d^2(x,y) \quad \Rightarrow \quad \operatorname{Exp}_{\gamma(t)}(z) \text{ exists and is in }\mathcal{C}.
\end{equation}
\end{definition}

The double geodesic strong convexity can also be rewritten in terms of the exponential map if $\operatorname{Exp}_{\gamma(t)}(\alpha t (1-t) d^2(x,y)z)\in\mathcal{C}$ for all $z$ of unit norm in $\TgtM $.
In this manner, it mirrors the algebraic expression of the Euclidean definition we provided in \eqref{eq:strong_convexity_euclidean} but within the Riemannian setup.
We denote it as the \textit{double geodesic strong convexity} also because the point $\operatorname{Exp}_{\gamma(t)}(\alpha t (1-t) d^2(x,y)z)$ is built via two geodesics, one between $x$ and $y$, and another starting at $\gamma(t)$.
In \cref{sec:double_geodesic_convexity}, we define a characterization of the double geodesic convexity via the classical \textit{double exponential map} (\cref{def:double_exponential_map}).

\subsubsection{Riemannian Scaling Inequality} 
In Euclidean space, \eqref{eq:scaling_inequality} is an equivalent definition  of the strong convexity of the sets via \cref{thm:equivalence_euclidean}, which helps in establishing convergence proofs of various algorithms.
In \cref{def:riemannian_scaling_inequality}, we propose the notion of the Riemannian scaling inequality, which is the the Riemannian counterpart of~\eqref{eq:scaling_inequality2}.

\begin{definition}[Riemannian Scaling Inequality]\label{def:riemannian_scaling_inequality}
Let $\mathcal{M}$ be a Riemannian manifold, and let $\mathcal{C}\subset\mathcal{M}$ be compact and uniquely geodesically convex.
The elements in the set $\mathcal{C}$ then satisfy the Riemannian scaling inequality if, for some $\alpha > 0$, for all $x\in\mathcal{C}$, $w\in \TxM $, and $v \in \underset{z\in\mathcal{C}}{\operatorname{argmax}} \langle w; \operatorname{Exp}^{-1}_x(z)\rangle_x$,
\begin{equation}\label{eq:scaling_inequality_riemannian}\tag{Riemannian scaling inequality}
    \langle w; \operatorname{Exp}^{-1}_x(v)\rangle_x \geq \alpha \|w\|_x \|\operatorname{Exp}_x^{-1}(v)\|_x^2.
\end{equation}
\end{definition}

\section{Relations between the Definitions of Strong Convexity}\label{sec:relation_between_definitions}

This section establishes some implications and equivalences between strong convexity notions and scaling inequality for manifolds. We summarize the links between these notions here.
\begin{align}\nonumber
    & \begin{matrix}
        \text{\scriptsize \hyperref[def:riemannian_strong_convexity]{\normalcolor Riemannian}}\\
        \text{\scriptsize \hyperref[def:riemannian_strong_convexity]{\normalcolor Strong Convexity}}
    \end{matrix}  
    \xRightarrow{\begin{matrix}
    \text{\footnotesize  Prop. \ref{prop:riemannian_implies_geodesic},}\vspace{-1ex}\\\text{\footnotesize  Hadamard}
    \end{matrix}} 
    \begin{matrix}
        \text{\scriptsize \hyperref[def:geodesic_strong_convexity]{\normalcolor Geodesic Strong}}\\
        \text{\scriptsize \hyperref[def:geodesic_strong_convexity]{\normalcolor Convexity}}
    \end{matrix} 
    \xLeftrightarrow{\begin{matrix}
    \text{\footnotesize  Prop. \ref{prop:equivalence_double_str_cvx_hadamard}}\vspace{-1ex} \\ \text{\footnotesize A. \ref{ass:equivalence_distance_exponential}}\end{matrix}}  
    \begin{matrix}
        \text{\scriptsize \hyperref[def:double_geodesic_strong_convexity]{\normalcolor Double Geodesic} }\\ \text{\scriptsize \hyperref[def:double_geodesic_strong_convexity]{\normalcolor Strong Convexity}  } 
    \end{matrix}
    \xRightarrow{\text{\footnotesize Prop. \ref{prop:implication_approximate_riemannian_scaling_inequality}}}    
     \begin{matrix}
        \text{\scriptsize \hyperref[def:approximate_riemannian_scaling_inequality]{\normalcolor Approx. Riemannian} }\\\text{\scriptsize \hyperref[def:approximate_riemannian_scaling_inequality]{\normalcolor Scaling Inequality} }
    \end{matrix}\\
    & \begin{matrix}
        \text{\scriptsize \hyperref[def:riemannian_strong_convexity]{\normalcolor Riemannian}}\\
        \text{\scriptsize \hyperref[def:riemannian_strong_convexity]{\normalcolor Strong Convexity}}
    \end{matrix}
    \xRightarrow{\text{\footnotesize Prop. \ref{prop:riemannian_scaling_inequality}}}
    \begin{matrix}
        \text{\scriptsize \hyperref[def:riemannian_scaling_inequality]{\normalcolor Riemannian}}\\\text{\scriptsize \hyperref[def:riemannian_scaling_inequality]{\normalcolor Scaling Inequality} }
    \end{matrix}  
    \xRightarrow{\hspace{5ex}}
    \begin{matrix}
        \text{\scriptsize \hyperref[def:approximate_riemannian_scaling_inequality]{\normalcolor Approx. Riemannian} }\\\text{\scriptsize \hyperref[def:approximate_riemannian_scaling_inequality]{\normalcolor Scaling Inequality} }
    \end{matrix} \label{eq:implication_def}
\end{align}

The scaling inequality and global strong convexity of a set are equivalent notions (\cref{thm:equivalence_euclidean}) in Hilbert spaces, but this won't be the case when working with Riemannian manifolds.
Instead, \cref{prop:riemannian_scaling_inequality} states that the Riemannian strong convexity implies a Riemannian scaling inequality, the latter being valuable for analyzing algorithms (\cref{sec:FW_geodesically_strongly_convex_set}).

\begin{restatable}[Riemannian Strong Convexity implies Riemannian Scaling Inequality]{proposition}{riemannianscalinginequality}\label{prop:riemannian_scaling_inequality} Let $\mathcal{M}$ be a Riemannian manifold, and $\mathcal{C}\subset\mathcal{M}$ be compact and uniquely geodesically convex. 
Let us assume that $\mathcal{C}$ is a Riemannian $\alpha$-strongly convex set (\cref{def:riemannian_strong_convexity}).
The elements in the set $\mathcal{C}$ then also satisfy the Riemannian scaling inequality (\cref{def:riemannian_scaling_inequality}).
\end{restatable}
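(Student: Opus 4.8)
The plan is to reduce the Riemannian scaling inequality at a point $x\in\mathcal{C}$ to the Euclidean scaling inequality for the set $\operatorname{Exp}_x^{-1}(\mathcal{C})\subset T_x\mathcal{M}$, which holds by hypothesis and by the equivalence (a)$\Leftrightarrow$(b) in \cref{thm:equivalence_euclidean}. Fix $x\in\mathcal{C}$ and $w\in T_x\mathcal{M}$. Set $\mathcal{C}_x \defi \operatorname{Exp}_x^{-1}(\mathcal{C})$, which is a compact convex subset of the Hilbert space $(T_x\mathcal{M}, \langle\cdot;\cdot\rangle_x)$ and, by assumption, $\alpha$-strongly convex with respect to $\|\cdot\|_x$. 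The Riemannian scaling inequality to be proven is exactly
\[
    \langle w; \operatorname{Exp}^{-1}_x(v)\rangle_x \geq \alpha \|w\|_x \|\operatorname{Exp}_x^{-1}(v)\|_x^2, \qquad v \in \underset{z\in\mathcal{C}}{\operatorname{argmax}} \langle w; \operatorname{Exp}^{-1}_x(z)\rangle_x.
\]
Writing $\tilde v \defi \operatorname{Exp}_x^{-1}(v)$, the bijectivity of $\operatorname{Exp}_x^{-1}:\mathcal{C}\to\mathcal{C}_x$ from \cref{remark:inverse_exponential_map} shows that $\tilde v \in \operatorname{argmax}_{u\in\mathcal{C}_x}\langle w; u\rangle_x$, i.e. $\tilde v$ is a maximizer of the linear functional $u\mapsto\langle w;u\rangle_x$ over $\mathcal{C}_x$. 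So the claim becomes the purely Euclidean statement $\langle w; \tilde v\rangle_x \geq \alpha\|w\|_x\|\tilde v\|_x^2$ for $\tilde v\in\operatorname{argmax}_{u\in\mathcal{C}_x}\langle w;u\rangle_x$.

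Next I would invoke \eqref{eq:scaling_inequality2}: since $\mathcal{C}_x$ is $\alpha$-strongly convex in the Euclidean sense, applying the scaling inequality in the form of \eqref{eq:scaling_inequality2} with the point $0\in T_x\mathcal{M}$ playing the role of ``$x$'' there and $w$ the normal direction gives exactly $\langle w; \tilde v - 0\rangle_x \geq \alpha\|w\|_x\|\tilde v - 0\|_x^2$, provided $0\in\mathcal{C}_x$ — and indeed $0 = \operatorname{Exp}_x^{-1}(x)\in\mathcal{C}_x$ since $x\in\mathcal{C}$. This is precisely the desired inequality, so the argument closes. If one prefers to avoid referencing \eqref{eq:scaling_inequality2} directly, the same conclusion follows from characterization (b) of \cref{thm:equivalence_euclidean} by taking $v$ there to be the maximizer $\tilde v$ (which lies on $\partial\mathcal{C}_x$ whenever $w\neq 0$, as a linear functional is maximized on the boundary of a compact convex set) and noting $w\in N_{\mathcal{C}_x}(\tilde v)$ by the first-order optimality condition for $\tilde v$; the case $w=0$ is trivial.

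The only genuinely delicate point is checking that the inner product and norm appearing in \cref{def:riemannian_scaling_inequality} and in \cref{def:riemannian_strong_convexity} are the \emph{same} Euclidean structure, namely $\langle\cdot;\cdot\rangle_x$ on $T_x\mathcal{M}$ — but this is immediate from the way both definitions are phrased, so there is no curvature or transport issue to manage: the entire statement lives in the single fixed tangent space $T_x\mathcal{M}$. A secondary bookkeeping step is confirming that the argmax in the Riemannian definition, taken over $z\in\mathcal{C}$ of $\langle w;\operatorname{Exp}_x^{-1}(z)\rangle_x$, corresponds under the bijection $\operatorname{Exp}_x^{-1}$ to the argmax over $\mathcal{C}_x$ of the linear functional, which is exactly the content of \cref{remark:inverse_exponential_map}. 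I expect the main obstacle, such as it is, to be merely stating these correspondences cleanly rather than any substantive analytic difficulty; the proof is essentially a change of variables pushing the Euclidean result of \cref{thm:equivalence_euclidean} through the logarithmic map at each base point $x$.
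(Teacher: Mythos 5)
Your proposal is correct and follows essentially the same route as the paper: both pull the statement back to the tangent space via the bijection $\operatorname{Exp}_x^{-1}:\mathcal{C}\to\operatorname{Exp}_x^{-1}(\mathcal{C})$ and apply the Euclidean scaling inequality \eqref{eq:scaling_inequality2} there, identifying the argmax over $\mathcal{C}$ with the argmax of the linear functional over $\operatorname{Exp}_x^{-1}(\mathcal{C})$. Your explicit use of the base point $0=\operatorname{Exp}_x^{-1}(x)$ is in fact slightly cleaner than the paper's notation, which writes $\|u-x\|_x$ where it means $\|u-\operatorname{Exp}_x^{-1}(x)\|_x=\|u\|_x$.
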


\begin{proof}
    As the set is strongly convex in the Euclidean sense, we obtain from \eqref{eq:scaling_inequality} that, for all $x\in\mathcal{C}$ and for all $w \in \TxM $, we have
    \[
        \langle w; u \rangle_x \geq \alpha \|w\|_x \|u - x\|_x^2,\quad u \in \underset{z\in\operatorname{Exp}_x^{-1}(\mathcal{C})}{\operatorname{argmax}} \langle w; z\rangle_x.
    \]
    As the exponential map is bijective over the set $\mathcal{C}$, 
    \[
        \underset{z\in\operatorname{Exp}_x^{-1}(\mathcal{C})}{\operatorname{argmax}} \langle w; z\rangle_x = \operatorname{Exp}_x^{-1}\left( \underset{z\in\mathcal{C}}{\operatorname{argmax}} \langle w; \operatorname{Exp}_x^{-1}(z)\rangle_x\right).
    \]
   Therefore, using $v = \operatorname{Exp}_x(u)$ gives us
    \[
        \langle w; \operatorname{Exp}^{-1}_x(v) \rangle_x \geq \alpha \|w\|_x \|\operatorname{Exp}^{-1}_x(v)\|_x^2,\quad v \in \underset{z\in\mathcal{C}}{\operatorname{argmax}} \langle w; \operatorname{Exp}_x^{-1}(z)\rangle_x.
    \]
\end{proof}

\begin{restatable}[Riemannian Strong Convexity implies Geodesic Strong Convexity]{proposition}{riemannianimpliesgeodesic}\label{prop:riemannian_implies_geodesic}
    Let $\mathcal{M}$ be a Riemannian manifold, and let $\mathcal{C}\subset\mathcal{M}$ be a set that is uniquely geodesically convex. Consider the distance $d(\cdot, \cdot)$ satisfying \cref{ass:equivalence_distance_exponential}.
Let us assume that the set $\mathcal{C}$ is a Riemannian $\alpha$-strongly convex set (\cref{def:riemannian_strong_convexity}). 
    The set $\mathcal{C}$ is then a geodesic $(\alpha\ell_{\M}/L_{\M}^2)$-strongly convex set (\cref{def:geodesic_strong_convexity}).
\end{restatable}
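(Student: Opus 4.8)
The plan is to push the whole question to the tangent space at the point $p\defi\gamma(t)$, apply the Euclidean characterization \cref{thm:equivalence_euclidean}(a) there, and then translate the resulting tangent-space ball back into a $d$-ball in $\mathcal{M}$ via \cref{ass:equivalence_distance_exponential}. Concretely, I would fix $x,y\in\mathcal{C}$, the unique distance-minimizing geodesic $\gamma$ joining them, and $t\in[0,1]$; since $\mathcal{C}$ is convex and uniquely geodesic, $\gamma\subseteq\mathcal{C}$, so $p=\gamma(t)\in\mathcal{C}$ and $\operatorname{Exp}_p^{-1}$ is defined on all of $\mathcal{C}$ (\cref{remark:inverse_exponential_map}). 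Writing $v\defi\gamma'(t)\in T_p\mathcal{M}$, the key elementary observation is that the two sub-arcs of $\gamma$ remain minimizing geodesics when re-based at $p$, so that $\operatorname{Exp}_p^{-1}(x)=-tv$ and $\operatorname{Exp}_p^{-1}(y)=(1-t)v$. Hence these two preimages are antipodal on the line $\mathbb{R}v$, with $\|\operatorname{Exp}_p^{-1}(x)-\operatorname{Exp}_p^{-1}(y)\|_p=\|v\|_p$ and $(1-t)\operatorname{Exp}_p^{-1}(x)+t\operatorname{Exp}_p^{-1}(y)=0$; moreover, geodesics have constant speed, so $\|v\|_p=\|\gamma'(0)\|_x=\|\operatorname{Exp}_x^{-1}(y)\|_x$.

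Next I would invoke the hypothesis at the point $p$: the set $K\defi\operatorname{Exp}_p^{-1}(\mathcal{C})\subseteq T_p\mathcal{M}$ is compact, convex, and $\alpha$-strongly convex w.r.t. $\|\cdot\|_p$ in the Euclidean sense. Applying \cref{thm:equivalence_euclidean}(a) to $K$ with the two points $\operatorname{Exp}_p^{-1}(x),\operatorname{Exp}_p^{-1}(y)\in K$ and the parameter $t$, the affine-combination term equals $0$ and the squared norm of the difference equals $\|v\|_p^2$, so $\alpha t(1-t)\|v\|_p^2\,z\in K$ for every unit $z\in T_p\mathcal{M}$; together with $0=\operatorname{Exp}_p^{-1}(p)\in K$ and convexity of $K$, this shows that the closed Euclidean ball of radius $\alpha t(1-t)\|v\|_p^2$ about the origin is contained in $K=\mathcal{C}_p$. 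Since $\operatorname{Exp}_p$ maps $\mathcal{C}_p$ bijectively onto $\mathcal{C}$ (\cref{remark:inverse_exponential_map}), every $w\in T_p\mathcal{M}$ with $\|w\|_p\le\alpha t(1-t)\|v\|_p^2$ satisfies $\operatorname{Exp}_p(w)\in\mathcal{C}$.

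It then remains to convert this into the required $d$-ball. By \cref{ass:equivalence_distance_exponential}, $\|v\|_p=\|\operatorname{Exp}_x^{-1}(y)\|_x\ge d(x,y)/L_\M$, hence $\alpha t(1-t)\|v\|_p^2\ge(\alpha/L_\M^2)\,t(1-t)\,d^2(x,y)$. Now take any $z$ with $d(p,z)\le(\alpha\ell_\M/L_\M^2)\,t(1-t)\,d^2(x,y)$; the lower inequality of \cref{ass:equivalence_distance_exponential} gives $\|\operatorname{Exp}_p^{-1}(z)\|_p\le d(p,z)/\ell_\M\le(\alpha/L_\M^2)\,t(1-t)\,d^2(x,y)\le\alpha t(1-t)\|v\|_p^2$, so $\operatorname{Exp}_p^{-1}(z)$ lies in the tangent ball of the previous step and therefore $z=\operatorname{Exp}_p(\operatorname{Exp}_p^{-1}(z))\in\mathcal{C}$. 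This is exactly the inclusion \eqref{eq:distance_geodesic_strong_convexity} with constant $\alpha\ell_\M/L_\M^2$.

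I expect the main obstacle to be the geometric bookkeeping in the first paragraph: justifying cleanly that re-basing $\gamma$ at the interior point $p$ keeps both pieces minimizing, so that the paper's restricted logarithm at $p$ really returns $-tv$ and $(1-t)v$ with the correct speed $\|v\|_p$, and checking that $\operatorname{Exp}_p^{-1}(z)$ is meaningful for every candidate point $z$ in the $d$-ball — this last point is where completeness of $\mathcal{M}$ (which supplies minimizing geodesics out of $p$) and the global form of \cref{ass:equivalence_distance_exponential} enter. The tracking of the constants $\ell_\M$ and $L_\M$ is then routine.
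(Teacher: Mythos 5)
Your proof is correct, but it takes a genuinely different route from the paper's. The paper pulls everything back to the tangent space at the \emph{endpoint} $x$: it bounds $\|\operatorname{Exp}_x^{-1}(\tilde z)-\operatorname{Exp}_x^{-1}(\gamma(t))\|_x$ by $\|\operatorname{Exp}_{\gamma(t)}^{-1}(\tilde z)\|_{\gamma(t)}$ using the Riemannian cosine-law inequality \citep[Corollary 24]{martinez2022accelerated}, which is a non-positive-curvature comparison (this is why the implication chart labels this arrow ``Hadamard''), and only then invokes the Euclidean strong convexity of $\operatorname{Exp}_x^{-1}(\mathcal{C})$ with $p=\operatorname{Exp}_x^{-1}(y)$, $q=0$. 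You instead apply the hypothesis at the interior point $p=\gamma(t)\in\mathcal{C}$ itself, where the two endpoints pull back to the collinear vectors $-tv$ and $(1-t)v$ (legitimate, since the sub-arcs of the minimizing geodesic are minimizing and the restricted logarithm of \cref{remark:inverse_exponential_map} is bijective), so \cref{thm:equivalence_euclidean}(a) with parameter $t$ immediately places the sphere of radius $\alpha t(1-t)\|v\|_p^2$ about the origin inside $\operatorname{Exp}_p^{-1}(\mathcal{C})$, the convex-hull step (using the convexity built into the Euclidean notion) upgrades it to the full ball, and \cref{ass:equivalence_distance_exponential} converts it into the required $d$-ball with exactly the constant $\alpha\ell_{\M}/L_{\M}^2$. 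Your route avoids the comparison theorem altogether and hence does not need the Cartan--Hadamard structure that the paper's own proof quietly assumes; the only shared caveat, which you correctly flag, is that both arguments evaluate an inverse exponential at candidate points $z$ not yet known to lie in $\mathcal{C}$, which completeness (minimizing geodesics out of $p$ exist) and the global form of \cref{ass:equivalence_distance_exponential} cover, exactly as in the paper. In short: the paper's proof showcases the triangle-comparison machinery it reuses elsewhere, while yours is more elementary and proves the proposition as stated, without the curvature restriction.
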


Before proving the theorem, we introduce the important notion of \textit{geodesic triangle}.
For the three points $p,q,r\in\mathcal{M}$, the set $\Delta pqr$ of the three minimizing geodesics joining these three points is a \textit{geodesic triangle}.
A \textit{comparison triangle} $\Delta\bar{p}\bar{q}\bar{r}$ is then a triangle with the same side length as $\Delta pqr$ in a metric space with a constant \textit{sectional curvature}.
\textit{Comparison theorems} are then used to compare the angle between these triangles according to a lower (Toponogov’s theorem) or an upper bound (Rauch's theorem) on the sectional curvature of the geodesic metric space $\mathcal{M}$.
We refer to \citep{burago1992ad,meyer1989toponogov,burago2001course} for a detailed treatment of such comparison theorems and to \citep{zhang2016first,zhang2018estimate} for their use in optimization contexts.
It should be noted that comparison theorems do not only compare triangles, but also \textit{hinges}, and result in angle or length comparisons \citep[Theorem 2.2. B]{meyer1989toponogov}.

\begin{proof}
As the set $\mathcal{C}$ is a Riemannian strongly convex set (\cref{def:riemannian_strong_convexity}), by using the definition of the strong convexity of sets in Hilbert spaces \eqref{eq:strong_convexity_euclidean}, we obtain
    \begin{align}\label{alg:str_besoin}
        & \forall x\in\mathcal{M},\;\;\forall p,\,q\in\operatorname{Exp}_x^{-1}(\mathcal{C}),\;\forall t\in[0,1],\nonumber\\
        & \;\text{if}\; z\in \TxM :\left\|z-\left(tp+(1-t)q\right)\right\|\leq \alpha t(1-t)\|p-q\|^2, \;\;\text{then} \;\; z\in\operatorname{Exp}_x^{-1}(\mathcal{C}),
    \end{align}
    for some parameter $\alpha > 0$. We now consider arbitrary points $ x, y \in\mathcal{M}$ and $\tilde z\in\mathcal{M}$ s.t. $d(\gamma(t),\tilde z) \leq \tilde{\alpha} t(1-t)d^2( x,  y)$, where $\gamma(t):[0, 1]\to\M$ is the geodesic between $ x$ and $y$ and $\tilde{\alpha} \defi \alpha\ell_{\M}L_{\M}^{-2} > 0$. Due to \cref{ass:equivalence_distance_exponential} we have  
    \begin{equation}\label{eq:our_assumption}
        \ell_{\M}\norm{\exponinv{\gamma(t)}(\tilde{z})} \leq L_{\M}^2\tilde{\alpha} t(1-t) \norm{\exponinv{x}(y)}^2.
    \end{equation}
    Now, \citep[Corollary 24]{martinez2022accelerated} using the Riemannian cosine law inequality for our Cartan--Hadamard manifold in the geodesic triangle with vertices $x$, $\tilde{z}$ and $\gamma(t)$, and the corresponding triangle in $T_x\M$ via $\exponinv{x}(\cdot)$, we have
\begin{align*}\label{alg:str_besoin}
    2\innp{\exponinv{x}(\tilde{z}), \exponinv{x}(\gamma(t))} &\geq \norm{\exponinv{x}(\tilde{z})}^2  + \norm{\exponinv{x}(\gamma(t))}^2- \norm{\exponinv{\gamma(t)}(\tilde{z})}^2  \\
    2\innp{\exponinv{x}(\tilde{z}), \exponinv{x}(\gamma(t))} &= \norm{\exponinv{x}(\tilde{z})}^2  + \norm{\exponinv{x}(\gamma(t))}^2 - \norm{\exponinv{x}(\tilde{z})- \exponinv{x}(\gamma(t))}^2,
\end{align*}
which implies 
\begin{equation}\label{eq:estimated_distance_is_less_in_hadamard}
    \norm{\operatorname{Exp}_x^{-1}(\tilde{z}) - \operatorname{Exp}_x^{-1}(\gamma(t))}_x \leq \norm{\operatorname{Exp}^{-1}_{\gamma(t)}(\tilde{z})}_{\gamma(t)}.
\end{equation}
    Now, let $p=\operatorname{Exp}^{-1}_{ x}( y), q=\operatorname{Exp}^{-1}_{ x}( x) = 0$, and $z = \operatorname{Exp}^{-1}_x(\tilde z)$, and note that $\operatorname{Exp}^{-1}_x(\gamma(t))= t \operatorname{Exp}_x^{-1}(y)$. Hence, combining \eqref{eq:our_assumption} with \eqref{eq:estimated_distance_is_less_in_hadamard} and using our new notation, we obtain
    \[
        \ell_{\M} \norm{z - (tp + (1-t)q)}_x \leq L_{\M}^2\tilde{\alpha} t (1-t) \norm{p - q}_x^2,
    \]
    and after using the value of $\tilde{\alpha} = \alpha\ell_{\M}L_{\M}^{-2}$ and \eqref{alg:str_besoin}, we conclude that $z = \operatorname{Exp}^{-1}_x(\tilde z)\in \operatorname{Exp}^{-1}_x(\mathcal{C})$, and hence $\tilde z\in\mathcal{C}$.
\end{proof}

By the same arguments presented in \cref{prop:riemannian_implies_geodesic}, one can establish that Riemannian strong convexity implies the geodesic strong convexity of sets in Cartan--Hadamard manifolds. This situation bears resemblance to the various notions of geodesically convex sets for Cartan--Hadamard manifolds \citep{boumal2020introduction}. It should be noted that geodesic and double geodesic strong convexity (\cref{def:geodesic_strong_convexity,def:double_geodesic_strong_convexity}) become equivalent under mild assumptions, which is noteworthy as \cref{def:geodesic_strong_convexity} relies on the geodesic metric space structure of $\mathcal{M}$, while \cref{def:double_geodesic_strong_convexity} leverages the manifold structure of $\mathcal{M}$.

\begin{restatable}[Equivalence between Geodesic and Double Geodesic Strong Convexity]{proposition}{equivalencedoublestrcvxhadamard}\label{prop:equivalence_double_str_cvx_hadamard}
Let $\mathcal{M}$ be a complete, connected Riemannian manifold, and let us assume that \cref{ass:equivalence_distance_exponential} holds.
If the subset $\mathcal{C}\subset\mathcal{M}$ is a geodesic $\alpha$-strongly convex set, then it is also a double geodesic $\frac{\alpha}{L_{\mathcal{M}}}$-strongly convex set. If the set $\mathcal{C}$ is a double geodesic $\alpha$-strongly convex set, then it is also a geodesic $\ell_{\mathcal{M}}\alpha$-strongly convex set.
\end{restatable}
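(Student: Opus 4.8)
The plan is to prove the two implications separately, in each case converting between the metric ball appearing in \cref{def:geodesic_strong_convexity} and the tangent-space ball appearing in \cref{def:double_geodesic_strong_convexity}. Two elementary facts do all the work. First, reading $\norm{\exponinv{x}(y)}_x$ as $d_{\mathcal{M}}(x,y)$, the length of a minimizing geodesic, \cref{ass:equivalence_distance_exponential} says exactly that $\ell_{\mathcal{M}} d_{\mathcal{M}} \leq d \leq L_{\mathcal{M}} d_{\mathcal{M}}$ on all of $\mathcal{M}^2$. Second, for any $x\in\mathcal{M}$ and $v\in\TxM$ the curve $s\mapsto\expon{x}(sv)$ on $[0,1]$ is a geodesic of constant speed $\norm{v}_x$, hence $d_{\mathcal{M}}(x,\expon{x}(v))\leq\norm{v}_x$, with equality when this geodesic is minimizing.

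For the implication from geodesic $\alpha$-strong convexity to double geodesic $(\alpha/L_{\mathcal{M}})$-strong convexity, I would fix a geodesic $\gamma$ joining $x,y\in\mathcal{C}$, a time $t\in[0,1]$, and a vector $z\in T_{\gamma(t)}\mathcal{M}$ with $\norm{z}_{\gamma(t)}\leq\tfrac{\alpha}{L_{\mathcal{M}}}t(1-t)d^2(x,y)$, and set $p\defi\expon{\gamma(t)}(z)$, which exists because $\mathcal{M}$ is complete. Chaining the two facts above gives $d(\gamma(t),p)\leq L_{\mathcal{M}}d_{\mathcal{M}}(\gamma(t),p)\leq L_{\mathcal{M}}\norm{z}_{\gamma(t)}\leq\alpha t(1-t)d^2(x,y)$, so $p$ lies in the ball \eqref{eq:distance_geodesic_strong_convexity}, which is contained in $\mathcal{C}$ by hypothesis. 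Hence $\expon{\gamma(t)}(z)\in\mathcal{C}$, which is precisely \eqref{eq:strong_convexity_riemannian} with constant $\alpha/L_{\mathcal{M}}$.

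For the converse, from double geodesic $\alpha$-strong convexity to geodesic $\ell_{\mathcal{M}}\alpha$-strong convexity, I would fix $\gamma$ joining $x,y\in\mathcal{C}$, $t\in[0,1]$, and a point $p$ with $d(\gamma(t),p)\leq\ell_{\mathcal{M}}\alpha\,t(1-t)d^2(x,y)$, and let $z\in T_{\gamma(t)}\mathcal{M}$ be the initial velocity of a minimizing geodesic from $\gamma(t)$ to $p$, so that $\expon{\gamma(t)}(z)=p$ and $\norm{z}_{\gamma(t)}=d_{\mathcal{M}}(\gamma(t),p)$. Then $\ell_{\mathcal{M}}\norm{z}_{\gamma(t)}\leq d(\gamma(t),p)\leq\ell_{\mathcal{M}}\alpha\,t(1-t)d^2(x,y)$ yields $\norm{z}_{\gamma(t)}\leq\alpha t(1-t)d^2(x,y)$, so double geodesic $\alpha$-strong convexity forces $p=\expon{\gamma(t)}(z)\in\mathcal{C}$, which is \eqref{eq:distance_geodesic_strong_convexity} with constant $\ell_{\mathcal{M}}\alpha$.

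The computation is routine; the one subtlety worth flagging is the asymmetry in the second fact above, which is what makes the two directions lose different constants. In the forward direction the geodesic produced by $z$ need not be minimizing, so only the inequality $d_{\mathcal{M}}(\gamma(t),p)\leq\norm{z}_{\gamma(t)}$ is available and one pays the factor $L_{\mathcal{M}}$; in the converse one deliberately picks $z$ minimizing, obtains an equality, and pays $\ell_{\mathcal{M}}$ instead. One should also check that \cref{ass:equivalence_distance_exponential} is only invoked for pairs it is stated to hold for, which is fine since it is assumed globally on $\mathcal{M}^2$ and in the forward direction $p$ need not be known to lie in $\mathcal{C}$ in advance.
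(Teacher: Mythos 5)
Your proposal is correct and follows essentially the same route as the paper's proof: both directions are handled by converting between the metric ball and the tangent-space ball using the two-sided bound of \cref{ass:equivalence_distance_exponential}, yielding the same constants $\alpha/L_{\mathcal{M}}$ and $\ell_{\mathcal{M}}\alpha$. Your explicit remark that in the forward direction the geodesic generated by $z$ need not be minimizing, so that only $d_{\mathcal{M}}(\gamma(t),\operatorname{Exp}_{\gamma(t)}(z))\leq\norm{z}_{\gamma(t)}$ is available, is a slightly more careful articulation of a step the paper leaves implicit, but the argument is the same.
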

\begin{proof}
($\Rightarrow$) We start with $\mathcal{C}\subset\mathcal{M}$ being an $\alpha$-geodesically strongly convex set. Now, let $ z\in \TgtM $ such that
\[
    \| z\| \leq \frac{\alpha}{L_{\mathcal{M}}} t(1-t)d^2(x,y).
\]
As the distance function $d$ can be bounded as $d(x,y)\leq L_{\mathcal{M}}\|\operatorname{Exp}^{-1}_{x}(y)\|$, we have 
\[
    \| z\| \leq \frac{\alpha}{L_{\mathcal{M}}} t(1-t)d^2(x,y) \quad \Rightarrow \quad d\left(\gamma(t),\operatorname{Exp}_{\gamma(t)}( z)\right)\leq \alpha t(1-t)d^2(x,y)
\]
As the set is geodesically strongly convex, we have $\operatorname{Exp}_{\gamma(t)}(z)\in\mathcal{C}$.

($\Leftarrow$) Now, we assume $\mathcal{C}$ to be a doubly exponentially strongly convex set with the parameter $\alpha$. We construct the point $z\in\mathcal{M}$ such that
\[
    d(\gamma(t),z) \leq \ell_{\mathcal{M}} \alpha t(1-t)d^2(x,y).
\]
    As the distance function $d$ can be bounded as $\ell_{\mathcal{M}}\|\operatorname{Exp}^{-1}_{\gamma(t)}(z)\| \leq d(\gamma(t),z)$, we have 
\[
    d(\gamma(t),z) \leq \ell_{\mathcal{M}} \alpha t(1-t)d^2(x,y) \quad \Rightarrow \quad \|\operatorname{Exp}^{-1}_{\gamma(t)}(z)\| \leq \alpha t(1-t)d^2(x,y)
\]
As the set is $\alpha$-double exponentially strongly convex, we have $z\in\mathcal{C}$.
\end{proof}

\section{Sublevel Sets of Geodesically Strongly Convex Functions}\label{sec:level_set_geodesically}

\cref{def:geodesic_strong_convexity,def:riemannian_strong_convexity,def:double_geodesic_strong_convexity} formalize the concept of strong convexity for subsets of a Riemannian manifold. Additionally, \cref{def:riemannian_scaling_inequality} serves as a key tool in \cref{sec:FW_geodesically_strongly_convex_set} to establish linear convergence. However, proving that a set is strongly convex might be difficult in practice. Hence, this section aims to develop the necessary framework for demonstrating the strong convexity of sets within Riemannian manifolds.

\subsection{Euclidean Case}
In the context of Hilbert spaces, the uniform convexity of $\ell_p$ or $p$-Schatten balls has been studied in the theory of Banach spaces \citep{clarkson1936uniformly,boas1940some,ball1994sharp}. However, for less standard cases, the strong convexity of sets can often be most efficiently demonstrated by showing that they correspond to the sublevel sets of strongly convex functions \citep{journee2010generalized,garber2015faster}.
We present the Riemannian counterpart of \citep[Theorem 12]{journee2010generalized} for sublevel sets of geodesically strongly convex functions (\cref{def:strongly_cvx_geo_fct}). These findings extend to the notion of geodesically convex functions, as observed in \citep{rapcsak2013smooth} and \citep[Proposition 11.8]{boumal2020introduction}.

In the Euclidean setting, \citep[Theorem 12]{journee2010generalized} demonstrates that the sublevel sets of $L$-smooth, $\mu$-strongly convex functions are strongly convex sets. In particular, the following set is $\mu/2\sqrt{2Ls}$ strongly convex,
\[
    Q_s := \{ x ~|~ f(x)-f^* \leq s \}, \quad \text{where} \;\; f^* = \min_x f(x).
\]

\subsection{Non-Euclidean Case} We now demonstrate that the sublevel sets of a geodesically smooth, strongly convex function are geodesic strongly convex sets. This result relies heavily on the following lemma.

\begin{restatable}[Smoothness property]{lemma}{Lsmoothnessbound}\label{lem:L_smoothness_bound}
Let us consider $f$ as a geodesically $L$-smooth function on the geodesically closed convex subset $\mathcal{C}\subset \mathcal{M}$, where $\mathcal{M}$ is an Cartan--Hadamard manifold. We denote $x^*\in\operatorname{argmin}_{x\in\mathcal{C}} f(x)$. Then,
\begin{equation}\label{eq:L_smoothness_bound}
    \|\nabla f(x)\|_x \leq \sqrt{2L(f(x) - f(x^*))}.
\end{equation}
\end{restatable}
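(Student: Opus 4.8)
The plan is to mimic the classical Euclidean proof that $\|\nabla f(x)\|^2 \leq 2L(f(x)-f^*)$, which proceeds by taking a gradient step from $x$ and using smoothness to bound the decrease. In the Riemannian setting the natural analogue of a ``gradient step'' is to move along the geodesic emanating from $x$ in the direction $-\nabla f(x)$, i.e.\ to consider the point $y_s \defi \operatorname{Exp}_x(-s\,\nabla f(x))$ for an appropriate step size $s>0$. The key structural facts I would use are: (i) geodesic $L$-smoothness of $f$ (\cref{def:geodesic_smoothness_fct}), which gives the quadratic upper bound $f(y) \leq f(x) + \langle \nabla f(x); \operatorname{Exp}_x^{-1}(y)\rangle_x + \tfrac{L}{2}d^2(x,y)$; and (ii) the fact that $x^*$ is the minimizer, so $f(y_s) \geq f(x^*)$ for any admissible $y_s \in \mathcal{C}$.

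First I would verify that, for $s$ small enough, the point $y_s = \operatorname{Exp}_x(-s\nabla f(x))$ lies in $\mathcal{C}$ and that the geodesic segment from $x$ to $y_s$ is minimizing; here I would lean on the Cartan--Hadamard hypothesis (so the exponential map is a global diffeomorphism and geodesics are minimizing) together with the closedness/convexity of $\mathcal{C}$, possibly restricting attention to the relevant sublevel set. Then $\operatorname{Exp}_x^{-1}(y_s) = -s\nabla f(x)$ and $d(x,y_s) = s\|\nabla f(x)\|_x$ (using that $d$ here is the Riemannian distance, or applying \cref{ass:equivalence_distance_exponential} if a comparable distance is used — though since the statement uses $\|\cdot\|_x$ directly I expect $d = d_{\mathcal{M}}$). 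Plugging into smoothness:
\[
    f(x^*) \leq f(y_s) \leq f(x) - s\|\nabla f(x)\|_x^2 + \tfrac{L}{2}s^2\|\nabla f(x)\|_x^2.
\]
Rearranging gives $f(x) - f(x^*) \geq \left(s - \tfrac{L}{2}s^2\right)\|\nabla f(x)\|_x^2$, and optimizing over $s$ (the maximum of $s - \tfrac{L}{2}s^2$ is $\tfrac{1}{2L}$ at $s = 1/L$) yields $f(x)-f(x^*) \geq \tfrac{1}{2L}\|\nabla f(x)\|_x^2$, which is exactly \eqref{eq:L_smoothness_bound}.

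The main obstacle is the admissibility issue in the first step: one must ensure the trial point $y_{1/L}$ (or at least $y_s$ for some $s$ achieving a large enough value of $s - \tfrac L2 s^2$) actually belongs to $\mathcal{C}$, since $f$ is only assumed smooth on $\mathcal{C}$ and the minimizer is taken over $\mathcal{C}$. In the unconstrained Euclidean case this is free, but on a constrained domain one typically needs either that $x$ is interior, or a more careful argument. I would handle this by noting that the inequality is vacuous when $\nabla f(x)=0$, and otherwise argue that since $f$ is geodesically convex on $\mathcal{C}$ and $x^* $ minimizes it, the whole minimizing geodesic from $x$ to $x^*$ lies in $\mathcal{C}$; one can then run the smoothness estimate along the direction $\operatorname{Exp}_x^{-1}(x^*)$ toward $x^*$ rather than along $-\nabla f(x)$, or invoke that the sublevel set $\{f \le f(x)\}$ is geodesically convex and contains a full neighborhood of the descent direction. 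A cleaner route, if convexity of $f$ is available in this lemma's hypotheses, is: by convexity $f(x^*) \geq f(x) + \langle \nabla f(x); \operatorname{Exp}_x^{-1}(x^*)\rangle_x$, combined with smoothness evaluated suitably, to squeeze out the bound without ever leaving $\mathcal{C}$; but the pure smoothness argument above is the expected intended proof and the domain subtlety is the only place requiring genuine care.
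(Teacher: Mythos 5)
Your argument is correct in substance, but it reaches the bound by a different route than the paper. The paper's proof (\cref{ap:proofs_level_set}) goes through Riemannian Fenchel duality: it introduces the $m$-Fenchel conjugate \eqref{eq:Riemannian_conjugate} of Bergmann et al., applies the order-reversal property (\cref{lem:fenchel_dual_inequality}) to $f$ and its quadratic upper model $\tilde f(p)=f(x)+\langle\nabla f(x);\operatorname{Exp}_x^{-1}(p)\rangle_x+\tfrac{L}{2}d^2(x,p)$, computes $\tilde f_x^*(\xi_x)=-f(x)+\tfrac{1}{2L}\|\xi_x-\nabla f(x)\|_x^2$, and evaluates at $\xi_x=0$ to conclude $\tfrac{1}{2L}\|\nabla f(x)\|_x^2\leq f(x)-f^*$. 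Your primal argument (step to $\operatorname{Exp}_x(-s\nabla f(x))$, apply geodesic smoothness, optimize $s=1/L$) is the same computation seen from the other side: the conjugate of $\tilde f$ at zero is exactly minus the value of $\tilde f$ after a $1/L$-step, so your route is the more elementary phrasing, while the paper's buys a clean placement inside the Fenchel-duality framework it cites. Concerning the admissibility issue you flag: it is a genuine subtlety, but the paper does not resolve it either, since the supremum in \eqref{eq:Riemannian_conjugate} is taken over all of $T_x\mathcal{M}$, which is tantamount to assuming the quadratic model may be minimized without regard to $\mathcal{C}$, i.e., precisely the assumption that your trial point $y_{1/L}$ is admissible; so your main argument matches the paper's in both content and level of rigor. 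One caution about your proposed fallbacks: running the smoothness estimate only along the direction $\operatorname{Exp}_x^{-1}(x^*)$ bounds merely the component of $\nabla f(x)$ along that direction, not the full norm $\|\nabla f(x)\|_x$, so that variant would not recover \eqref{eq:L_smoothness_bound}; a fully constraint-safe version needs an extra hypothesis (e.g., the stationarity $\nabla f(x^*)=0$ assumed elsewhere in the paper, or the $1/L$-step remaining in $\mathcal{C}$), just as the paper's proof implicitly requires.
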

This result is based on the concept of functional duality in a Riemannian manifold, which has been comprehensively studied in \citep{bergmann2021fenchel}. The corresponding proof can be referred to in \cref{ap:proofs_level_set}. 

\begin{restatable}[Geodesic Strong Convexity of sublevel sets]{theorem}{geodesicstrongconvexitylevelsets}\label{thm:geodesic_strong_convexity_level_sets} 
Let $\mathcal{M}$ be a Riemannian manifold. Suppose that $\mathcal{C}\subseteq\mathcal{M}$ is uniquely geodesically convex and $f:\mathcal{C}\subseteq\mathcal{M}\rightarrow \mathbb{R}$ is a proper, geodesically $L$-smooth, and $\mu$-strongly convex function on $\mathcal{C}$ w.r.t. the distance function $d$ satisfying \cref{ass:equivalence_distance_exponential}. Let $x^*\in\mathcal{C}$ satisfying $\nabla f(x^*) = 0$. Let $Q_s:= \big\{ x ~|~ f(x)-f^*\leq s \big\}\subseteq \mathcal{C}$ be geodesically strictly convex for some $s>0$, that is, every geodesic segment in $Q_s$ is in the interior of $Q_s$ except possibly for its endpoints.
Then, $Q_s$ is a geodesic strongly convex set with $\alpha = \mu/2\sqrt{2sL\max\{\ell^{-2}_\mathcal{M};\,1\}}$ (\cref{def:geodesic_strong_convexity}).
\end{restatable}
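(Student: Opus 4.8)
The plan is to mimic the Euclidean argument from \citep[Theorem 12]{journee2010generalized}, translating it into the geodesic-metric language of \cref{def:geodesic_strong_convexity}. Fix a geodesic $\gamma:[0,1]\to Q_s$ joining $x,y\in Q_s$, fix $t\in[0,1]$, and let $m=\gamma(t)$. We must show that every point $\tilde z$ with $d(m,\tilde z)\le \alpha t(1-t)d^2(x,y)$ lies in $Q_s$, i.e. $f(\tilde z)-f^*\le s$. The natural route is: (i) bound $f(m)$ away from the boundary level $f^*+s$ using $\mu$-strong convexity of $f$ along $\gamma$; (ii) bound how much $f$ can grow from $m$ to $\tilde z$ using $L$-smoothness and \cref{lem:L_smoothness_bound}; (iii) check the two bounds are compatible for the claimed $\alpha$.

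For step (i), strong convexity \eqref{eq:strongly_cvx_geo_fct} along $\gamma$ gives $f(m)\le (1-t)f(x)+t f(y)-\tfrac{\mu}{2}t(1-t)d^2(x,y)\le f^*+s-\tfrac{\mu}{2}t(1-t)d^2(x,y)$, so the ``budget'' between $f(m)$ and the threshold is at least $\tfrac{\mu}{2}t(1-t)d^2(x,y)$. For step (ii), geodesic $L$-smoothness at $m$ gives $f(\tilde z)\le f(m)+\langle\nabla f(m);\exponinv{m}(\tilde z)\rangle_m+\tfrac L2 d^2(m,\tilde z)\le f(m)+\|\nabla f(m)\|_m\,\|\exponinv{m}(\tilde z)\|_m+\tfrac L2 d^2(m,\tilde z)$, and \cref{lem:L_smoothness_bound} bounds $\|\nabla f(m)\|_m\le\sqrt{2L(f(m)-f^*)}\le\sqrt{2Ls}$. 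Using \cref{ass:equivalence_distance_exponential} to pass between $\|\exponinv{m}(\tilde z)\|_m$ and $d(m,\tilde z)$ (this is where the $\ell_\M^{-2}$ and the $\max\{\ell_\M^{-2};1\}$ in $\alpha$ enter), write $r\defi d(m,\tilde z)\le\alpha t(1-t)d^2(x,y)$; then $f(\tilde z)-f^*\le (f(m)-f^*)+\sqrt{2Ls}\,\ell_\M^{-1} r+\tfrac L2\ell_\M^{-2}r^2$. Since $r$ is of order $\alpha t(1-t)d^2(x,y)$ and we want the right-hand side $\le s$, after substituting the bound on $f(m)-f^*$ from step (i) it suffices that $\sqrt{2Ls}\,\ell_\M^{-1} r+\tfrac L2\ell_\M^{-2}r^2\le \tfrac\mu2 t(1-t)d^2(x,y)$. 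The second (quadratic) term is lower order; plugging $r=\alpha t(1-t)d^2(x,y)$ with $\alpha=\mu/(2\sqrt{2sL\max\{\ell_\M^{-2};1\}})$ makes the linear term $\sqrt{2Ls}\,\ell_\M^{-1}\alpha t(1-t)d^2(x,y)$ at most (roughly) $\tfrac\mu4 t(1-t)d^2(x,y)$, leaving room for the quadratic term, which one controls using compactness of $\mathcal C$ (so $d^2(x,y)$, $t(1-t)$, and hence $r$ are uniformly bounded) — this is the only place a genuine ``for the constant to work'' estimate is needed.

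The main obstacle I anticipate is making step (ii) rigorous when $m$ is \emph{interior} versus on the boundary: \cref{lem:L_smoothness_bound} and the smoothness inequality are stated on $\mathcal C$, but the key inequality $f(m)-f^*\le s-\tfrac\mu2 t(1-t)d^2(x,y)$ only guarantees $m\in Q_s$, and we need $\tilde z\in\mathcal C$ (not merely in some ambient ball) for the smoothness inequality at $m$ to even be applicable — this is exactly why the hypothesis that $Q_s$ is geodesically strictly convex is imposed. Concretely, one argues that if $t\in(0,1)$ then $m$ lies in the \emph{interior} of $Q_s$, so a whole neighborhood of $m$ lies in $Q_s\subseteq\mathcal C$, ensuring $\tilde z\in\mathcal C$ for the small radius $r$ at hand; the endpoint cases $t\in\{0,1\}$ give $r=0$ and are trivial. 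A secondary subtlety is that \cref{lem:L_smoothness_bound} is stated for Cartan--Hadamard $\mathcal M$, so either we invoke it under that additional standing hypothesis or replace its use by a direct consequence of $L$-smoothness plus $\nabla f(x^*)=0$ restricted to the geodesic from $x^*$ to $m$; I would handle this by localizing: $f(m)-f^* \le \tfrac L2 d^2(x^*,m)$ and $\|\nabla f(m)\|_m\le L\,d(x^*,m)$ along that minimizing geodesic, which yields $\|\nabla f(m)\|_m\le\sqrt{2L(f(m)-f^*)}$ without curvature assumptions, and then combine as above.
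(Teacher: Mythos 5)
Your overall skeleton matches the paper's (strong convexity gives a budget at $\gamma(t)$; smoothness plus Cauchy--Schwarz plus \cref{lem:L_smoothness_bound} controls the growth from $\gamma(t)$ to $\tilde z$; \cref{ass:equivalence_distance_exponential} converts $\|\exponinv{\gamma(t)}(\tilde z)\|$ into $d(\gamma(t),\tilde z)$), but the quantitative step in your step (ii)--(iii) has a genuine gap, and it is precisely the step the paper does differently. By replacing $\|\nabla f(\gamma(t))\|_{\gamma(t)}\le\sqrt{2L(f(\gamma(t))-f^*)}$ with the crude bound $\sqrt{2Ls}$, you throw away the slack: with the stated $\alpha=\mu/(2\sqrt{2sL\max\{\ell_{\M}^{-2},1\}})$ and $r=\alpha t(1-t)d^2(x,y)$, the linear term is $\sqrt{2Ls}\,\ell_{\M}^{-1}r=\min\{1,\ell_{\M}^{-1}\}\cdot\tfrac{\mu}{2}t(1-t)d^2(x,y)$, which when $\ell_{\M}\le 1$ already equals the entire strong-convexity budget $\tfrac{\mu}{2}t(1-t)d^2(x,y)$ --- not ``roughly $\mu/4$'' as you claim --- so there is no room at all for the strictly positive quadratic term $\tfrac{L}{2}r^2$. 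Patching this with compactness of $\mathcal{C}$ only yields a smaller, diameter-dependent constant, not the theorem's $\alpha$. The paper avoids this by never discarding the gap: it keeps $\sqrt{f(\gamma(t))-f^*}$ and completes the square, bounding $f(c_t(\tilde t))-f^*\le\bigl(\sqrt{f(\gamma(t))-f^*}+\sqrt{\tfrac{L}{2}\max\{\ell_{\M}^{-2},1\}}\,d(\gamma(t),c_t(\tilde t))\bigr)^2$, so that the gradient term and the quadratic smoothness term are absorbed simultaneously; the sufficient condition becomes $d\le\sqrt{2/(L\max\{\ell_{\M}^{-2},1\})}\bigl(\sqrt s-\sqrt{f(\gamma(t))-f^*}\bigr)$, and the concavity estimate $\sqrt{s-\Delta}\le\sqrt s-\Delta/(2\sqrt s)$ with $\Delta=\tfrac{\mu}{2}t(1-t)d^2(x,y)$ then delivers exactly the claimed $\alpha$, with no diameter or burn-in restriction.

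A secondary issue: your proposed curvature-free replacement for \cref{lem:L_smoothness_bound} does not work as stated. From $f(m)-f^*\le\tfrac{L}{2}d^2(x^*,m)$ and $\|\nabla f(m)\|_m\le L\,d(x^*,m)$ both quantities are merely bounded above by $L\,d(x^*,m)$; this gives no inequality between $\|\nabla f(m)\|_m$ and $\sqrt{2L(f(m)-f^*)}$ (you would need a lower bound on $f(m)-f^*$, which is exactly what the lemma provides via Fenchel duality on a Cartan--Hadamard manifold). Your observation about needing $\tilde z\in\mathcal{C}$ for the smoothness inequality, and the role of the strict-convexity hypothesis on $Q_s$, is a fair point that the paper's proof does not dwell on, but it does not rescue the constant.
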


\begin{proof}
Let $s>0$, and let us consider $(x,y)\in Q_s^2$ and write $\gamma$ as the geodesic between $x$ and $y$. 
On successively using the geodesic smoothness of $f$, Cauchy-Schwartz, and \cref{lem:L_smoothness_bound}, for all smooth curve $c_{t}(\tilde t): c_t(0) = \gamma(t)$ and $t,\tilde t\in[0,1]$, we obtain
\begin{align}
f(c_t(\tilde t))-f^* \leq & f(\gamma(t))-f^* +  \langle \nabla f(\gamma(t)); \operatorname{Exp}^{-1}_{\gamma(t)}(c_t(\tilde t))\rangle_{\gamma(t)} + \frac{L}{2} d^2(\gamma(t), c_t(\tilde t)) \nonumber \\
\leq & f(\gamma(t))-f^* + \| \nabla f(\gamma(t))\|_{\gamma(t)} \|\operatorname{Exp}^{-1}_{\gamma(t)}(c_t(\tilde t))\|_{\gamma(t)} + \frac{L}{2} d^2(\gamma(t), c_t(\tilde t)) \nonumber\\
\leq &f(\gamma(t))-f^* + \sqrt{2L(f(\gamma(t))-f^*)} \ell^{-1}_{\mathcal{M}}d(\gamma(t), c_t(\tilde t)) + \frac{L}{2} d^2(\gamma(t), c_t(\tilde t)) \nonumber\\
\leq& \Big( \sqrt{f(\gamma(t))-f^*} + \sqrt{ \frac{L\max\{\ell^{-2}_{\mathcal{M}};\,1\}}{2} }  d(\gamma(t), c_t(\tilde t)) \Big)^2. \label{eq:condition_function_value}
\end{align}
Therefore, to ensure that $c_t(\tilde t)\in Q_s$, we can identify a sufficient condition on $d(\gamma(t), c_t(\tilde t))$, such that
\begin{align}
    & \Big( \sqrt{f(\gamma(t))-f^*} + \sqrt{ \frac{L\max\{\ell^{-2}_{\mathcal{M}};\,1\}}{2}} d(\gamma(t), c_t(\tilde t))\Big)^2 \leq s, \nonumber \\
    \Leftarrow &  d(\gamma(t), c_t(\tilde t)) \leq \text{sufficient condition} \leq \sqrt{\frac{2}{L\max\{\ell^{-2}_{\mathcal{M}};\,1\}}}\left(\sqrt{s} - \sqrt{f(\gamma(t))-f^*}\right) \label{eq:sufficient_condition_d}
 \end{align}
As $f$ is strongly convex, on using \cref{def:strongly_cvx_geo_fct} and because $(x,y)\in Q_s$, we obtain 
\[
    f(\gamma(t))-f^* \leq s - (1-t)t \frac{\mu}{2} d^2(x,y),
\]
Since $\sqrt{\cdot}$ is a concave function, we have $\sqrt{x-y} \leq \sqrt{x} - \frac{y}{2\sqrt{x}}$. Therefore,
\[
    \sqrt{f(\gamma(t))-f^*} \leq \sqrt{s - (1-t)t \frac{\mu d^2(x,y)}{2}} \leq \sqrt{s} - \frac{(1-t)t \mu d^2(x,y)}{4\sqrt{s}}.
\]
Hence, we have
\begin{align}
      \sqrt{\frac{2}{L\max\{\ell^{-2}_{\mathcal{M}};\,1\}}}\left(\sqrt{s} - \sqrt{f(\gamma(t))-f^*}\right) \geq & \sqrt{\frac{2}{L\max\{\ell^{-2}_{\mathcal{M}};\,1\}}}\left(\frac{(1-t)t \mu d^2(x,y)}{4\sqrt{s}}\right) \\
      = & (1-t)t  \frac{\mu}{2\sqrt{2sL\max\{\ell^{-2}_{\mathcal{M}};\,1\}}} d^2(x,y).\label{eq:relation_dsquare}
\end{align}
Therefore,
\begin{align*}
    & d(\gamma(t), c_t(\tilde t)) \leq  (1-t)t  \frac{\mu}{2\sqrt{2sL\max\{\ell^{-2}_{\mathcal{M}};\,1\}}} d^2(x,y) \\
    \xRightarrow{\eqref{eq:relation_dsquare}} \;\; & d(\gamma(t), c_t(\tilde t)) \leq \sqrt{\frac{2}{L\max\{\ell^{-2}_{\mathcal{M}};\,1\}}}\left(\sqrt{s} - \sqrt{f(\gamma(t))-f^*}\right)  \\
    \xRightarrow{\eqref{eq:sufficient_condition_d}} \;\; & \Big( \sqrt{f(\gamma(t))-f^*} + \sqrt{ \frac{L\max\{\ell^{-2}_{\mathcal{M}};\,1\}}{2}} d(\gamma(t), c_t(\tilde t))\Big)^2 \leq s \\
    \xRightarrow{\eqref{eq:condition_function_value}} \;\; & f(c_t(\tilde t))-f^*\leq s.
\end{align*}
Hence, $c_t(\tilde t)$ is in the set $Q_s$, which is the definition of geodesic strong convexity.
\end{proof}

\textbf{Example: Unit sphere.} Let us consider $\mathbb{S}^{n-1}$, the unit sphere manifold embedded in $\mathbb{R}^n$, with the distance function $d(x,y) = \arccos(\langle x;\;y\rangle)$. Let us fix $x_0\in\mathcal{M}$, and let $f(x)=d^2(x_0,x)$. Let the set $Q_s := \{x:f(x)\leq s\}$. When $s<\left(\frac{\pi}{2}\right)^2$, the squared distance function is a geodesically smooth and strongly convex function (the constants of which depend on $s$) \citep[Lem. 12.15]{lee2018introduction}, \citep[pp153–154]{sakai1996riemannian}. As $Q_s$ is also a strictly convex set for $s<(\pi/2)^2$, the set $Q_s$ is a geodesically strongly convex set, as shown in \cref{thm:geodesic_strong_convexity_level_sets}.

\textbf{Example: Symmetric Positive Definite Matrices.} Let $\mathcal{M}$ be the set of symmetric positive definite matrices with the affine-invariance metric, which yields the distance function $d_\mathcal{M}(X,Y) =\sqrt{ \sum_i \log^2 \lambda_i(X^{-1}Y)) }$ and is a Cartan--Hadamard manifold, cf. \citep{hosseini2015matrix}. 
Let us fix $X_0\in\mathcal{M}$, and let $f(X)=d_\mathcal{M}^2(X_0,X)$. As $\mathcal{M}$ is a Cartan--Hadamard manifold, its distance function is strongly convex, and $d$ is also smooth in bounded sets. Therefore, the sets $Q_s$, $s<f(0)$, are strongly convex, as shown in \cref{thm:geodesic_strong_convexity_level_sets}.

\section{Double Geodesic Strong Convexity and Approximate Riemannian Scaling Inequality} \label{sec:double_geodesic_convexity}

Under mild assumptions, \cref{sec:FW_geodesically_strongly_convex_set} shows that the Riemannian FW algorithm (\cref{algo:RFW}) admits a global linear convergence rate (\cref{th:RFW_riemannian_scaling_inequality}) when the feasible set in \eqref{eq:riemannian_manifold_opt} satisfies a Riemannian scaling inequality.
However, there is no apparent link between (double) geodesic strong convexity (\cref{def:riemannian_strong_convexity} or \cref{def:double_geodesic_strong_convexity}) and the Riemannian scaling inequality. Therefore, in this section, we explore the link between double geodesic strong convexity and the Riemannian scaling inequality. We introduce the notion of \textit{approximate Riemannian scaling inequality} (\cref{def:approximate_riemannian_scaling_inequality}) and demonstrate that the quality of the approximation depends on the \textit{exponential map operator} (\cref{def:double_exponential_map}).

\subsection{Double Geodesic Strong Convexity and Double Exponential Map}

In this section, we link the double geodesic strong convexity (\cref{def:double_geodesic_strong_convexity}) with an approximate version of the scaling inequality (introduced subsequently in \cref{def:approximate_riemannian_scaling_inequality}). We first introduce the \textit{double exponential map} \citep{gavrilov2007double,dzhepko2008double,nikonorov2013double} and rewrite the definition of the double geodesic strong convexity using two geodesic paths.

\begin{definition}[Double Exponential Map]\label{def:double_exponential_map}
Let $\mathcal{M}$ be a complete, connected Riemannian manifold. Let $\TxM $ be the tangent space to $\mathcal{M}$ at $x\in\mathcal{M}$, $\operatorname{Exp}_x(\cdot):\TxM \rightarrow\mathcal{M}$ be the exponential map at $x$, and $\Gamma_x^y: \TxM \rightarrow \TyM $ be the transportation map between the tangent spaces $\TxM $ and $\TyM $.
We define the \textit{double exponential map} at $x\in\mathcal{M}$ as the function $ \operatorname{Exp}_x(u,v):\TxM \times \TxM \rightarrow \mathcal{M}$, such that
\begin{equation}\label{eq:double_exponential_map}\tag{double exponential map}
\operatorname{Exp}_x(u,v) := \operatorname{Exp}_{\operatorname{Exp}_x(u)}\big(\Gamma_x^{\operatorname{Exp}_x(u)} v\big).
\end{equation}
We also define $h_x(\cdot,\cdot):\TxM \times \TxM  \rightarrow \TxM $ as the (unique) exponential map operator, such that
\begin{equation}\label{eq:exponential_map_operator}\tag{Exponential Map Operator}
\operatorname{Exp}_{x}(h_x(u,v)) := \operatorname{Exp}_x(u,v),~~\forall u,v\in \TxM .
\end{equation}
\end{definition}

In particular, we can rewrite the double geodesic strong convexity of a set with the double exponential map. Informally, \cref{def:double_geodesic_strong_convexity} takes into consideration the geodesic $\gamma$ between $x$ and $y$ and other geodesics departing from a $\gamma(t)$ that moves in every $z$ direction, thus describing a closed ball in the tangent space $\TgtM $. Therefore, \eqref{eq:strong_convexity_riemannian} in \cref{def:double_geodesic_strong_convexity} becomes
\begin{equation}\label{eq:geodesic_strong_convexity_double_exponential}
    \operatorname{Exp}_x\Big(\operatorname{Exp}_x^{-1}(\gamma(t)), \alpha t (1-t) d^2(x,y) z\Big) \in \mathcal{C}, \text{ for all } z \in \bar{B}(0, 1) \subset T_x\mathcal{M}.
\end{equation}
This expression motivates the use of the term \textit{double} to describe the notion of strong convexity.

\subsection{Link with Approximate Riemannian Scaling Inequality}

This section presents a connection between double geodesic strong convexity and a weaker version of the Riemannian scaling inequality using the exponential map operator. When the exponential map operator satisfies $h(u,v)=u+v$ (basically, when the set is Euclidean), the double geodesic set strong convexity implies the Riemannian scaling inequality (\cref{prop:implication_riemannian_scaling_inequality}).
Instead, in \cite{dzhepko2008double,nikonorov2013double}, explicit approximations of the \textit{exponential map operator} were proposed when the Riemannian manifold is symmetric or has a constant curvature, e.g., the Euclidean sphere or Lobachevsky spaces \citep{dzhepko2008double}.
These approximations provide expansions of $h_x(u,v)$ of the form
\begin{equation}\label{eq:approximation_exponential_operator}
h_x(u,v) = u + v + R_x(u,v),
\end{equation}
where the term $R_x(u,v)$ can be an order of magnitude smaller than $\sqrt{\|u\|_x^2 +\|v\|_x^2}$.
In these cases, we can no longer prove that the double geodesic strong convexity would imply the Riemannian scaling inequality.
Instead, we introduce an \textit{approximate Riemannian scaling inequality} in \cref{def:approximate_riemannian_scaling_inequality} and demonstrate in \cref{prop:implication_approximate_riemannian_scaling_inequality} that the double geodesic strong convexity implies an approximate Riemannian scaling inequality. The approximation quality is controlled by $R_x(u,v)$.

\begin{definition}[Approximate Riemannian Scaling Inequality]\label{def:approximate_riemannian_scaling_inequality}
Let $\mathcal{M}$ be a Riemannian manifold and $\mathcal{C}\subset\mathcal{M}$.
Let us consider a geodesically $\alpha$-strongly convex set $\mathcal{C}\subset\mathcal{M}$.
We then say that $\mathcal{C}$ satisfies the approximate Riemannian scaling inequality w.r.t. the distance $d(\cdot,\cdot)$ and the residual $r(\cdot):\mathcal{C}\rightarrow T\mathcal{M}$ if, for all $x\in\mathcal{C}$, $w\in \TxM $, and $v \in \operatorname{argmax}_{z\in\mathcal{C}} \langle w; \operatorname{Exp}^{-1}_x(z)\rangle_x$, we have
\begin{equation}\label{eq:approximate_scaling_inequality_riemannian}\tag{Approximate Riemannian Scaling Inequality}
\langle w; \operatorname{Exp}^{-1}_x(v)\rangle_x \geq \alpha \|w\|_x d(v, x)^2 + \langle w; r(x)\rangle_x,
\end{equation}
\end{definition}

In \cref{prop:implication_approximate_riemannian_scaling_inequality}, we now show that the geodesic strong convexity of a set in the Riemannian manifold $\mathcal{M}$ implies an approximate Riemannian scaling inequality that depends on this difference $R_x(u,v)$ of the exponential operator map.

\begin{restatable}[Double Geodesic Str. Cvx. implies Approximate Riemannian Scaling Inequality]{proposition}{strcvximpliesapproximateriemannianscaling}\label{prop:implication_approximate_riemannian_scaling_inequality}
Let us consider $\mathcal{C}\subset\mathcal{M}$ as double geodesically $\alpha$-strongly convex (\cref{def:double_geodesic_strong_convexity}) in a Riemannian manifold $\mathcal{M}$.
We define $R_x:\TxM \times \TxM \rightarrow \TxM $, such that, for all $x\in\mathcal{C}$, the exponential map operator (\cref{def:double_exponential_map}) is decomposed as
\begin{equation}\label{eq:residual_condition_exponential_operator_map}
h_x(u,v) = u + v + R_x(u,v),~~\forall(u,v)\in \TxM ,~x\in\mathcal{C}.
\end{equation}
The approximate scaling inequality (\cref{def:approximate_riemannian_scaling_inequality}) is then satisfied w.r.t. $d(\cdot,\cdot)$ with the residual $r(\cdot)$ s.t. for all $x\in\mathcal{C}$ and $w\in \TxM $ 
\begin{equation}\label{eq:residual_geodesic_strong_convexity}\tag{Residual}
r(x) = R_x\Big[\frac{1}{2}\gamma^\prime_{x,v}(0), \Gamma^x_{\gamma_{x,v}(1/2)}\Big(\frac{\alpha}{4}d^2(x,v)z^*\Big) \Big],
\end{equation}
where $v \in \operatorname{argmax}_{z\in\mathcal{C}} \langle w; \operatorname{Exp}^{-1}_x(z)\rangle_x$ and $z^*\in\operatorname{argmax}_{\|z\|_{\gamma_{x,v}(1/2)}=1} \big\langle \Gamma_x^{\gamma_{x,v}(1/2)} w; z \big\rangle_{\gamma_{x,v}(1/2)}$.
\end{restatable}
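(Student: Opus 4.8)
The plan is to transplant the classical Euclidean derivation of the scaling inequality from strong convexity --- take the midpoint of the chord $[x,v]$ and exploit the ball it encloses --- into the manifold via the \eqref{eq:double_exponential_map}, so that the residual in the statement appears exactly as the second-order correction $R_x$ of the exponential map operator along this construction.

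First I would fix $x\in\mathcal{C}$, $w\in T_x\mathcal{M}$, and a maximizer $v\in\operatorname{argmax}_{z\in\mathcal{C}}\langle w;\operatorname{Exp}_x^{-1}(z)\rangle_x$, and write $\gamma\defi\gamma_{x,v}$ for the minimizing geodesic from $x$ to $v$ and $m\defi\gamma(1/2)$ for its midpoint. Applying \cref{def:double_geodesic_strong_convexity} to the pair $(x,v)$ at $t=1/2$ shows that every $z'\in T_m\mathcal{M}$ with $\|z'\|_m\leq\tfrac{\alpha}{4}d^2(x,v)$ satisfies $\operatorname{Exp}_m(z')\in\mathcal{C}$. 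I then choose the worst admissible displacement: let $z^*$ be a unit vector of $T_m\mathcal{M}$ maximizing $\langle\Gamma_x^m w;\cdot\rangle_m$ (it exists by compactness of the unit sphere), and set $p\defi\operatorname{Exp}_m\!\big(\tfrac{\alpha}{4}d^2(x,v)z^*\big)$, which lies in $\mathcal{C}$.

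The key step is to read $p$ off as a value of the double exponential map based at $x$. Since $\operatorname{Exp}_x^{-1}(v)=\gamma'(0)$ and $m=\operatorname{Exp}_x\!\big(\tfrac12\gamma'(0)\big)$, setting $u\defi\tfrac12\gamma'(0)$ and $v^\flat\defi\Gamma^x_m\!\big(\tfrac{\alpha}{4}d^2(x,v)z^*\big)$ gives $p=\operatorname{Exp}_x(u,v^\flat)$ by \eqref{eq:double_exponential_map}. Then \eqref{eq:exponential_map_operator} together with the decomposition \eqref{eq:residual_condition_exponential_operator_map} yields $\operatorname{Exp}_x^{-1}(p)=h_x(u,v^\flat)=u+v^\flat+R_x(u,v^\flat)$, and $R_x(u,v^\flat)$ is literally the residual $r(x)$ of the statement (with $z^*$ the unit maximizer appearing there). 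One has to justify here that $h_x(u,v^\flat)$ may be taken to equal $\operatorname{Exp}_x^{-1}(p)$ in the restricted sense of \cref{remark:inverse_exponential_map}, which is legitimate precisely because $p\in\mathcal{C}$.

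To finish, I would invoke optimality of $v$: since $p\in\mathcal{C}$,
\[
\langle w;\operatorname{Exp}_x^{-1}(v)\rangle_x \ \geq\ \langle w;\operatorname{Exp}_x^{-1}(p)\rangle_x \ =\ \langle w;u\rangle_x + \langle w;v^\flat\rangle_x + \langle w;r(x)\rangle_x .
\]
Now $\langle w;u\rangle_x=\tfrac12\langle w;\operatorname{Exp}_x^{-1}(v)\rangle_x$, which I move to the left-hand side, while $\langle w;v^\flat\rangle_x=\langle\Gamma_x^m w;\tfrac{\alpha}{4}d^2(x,v)z^*\rangle_m=\tfrac{\alpha}{4}d^2(x,v)\|w\|_x$, using that parallel transport is a linear isometry (so $\langle w;\Gamma^x_m(\cdot)\rangle_x=\langle\Gamma_x^m w;\cdot\rangle_m$) and that $z^*$ maximizes $\langle\Gamma_x^m w;\cdot\rangle_m$ over unit vectors. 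Rearranging produces an inequality of the form of the \eqref{eq:approximate_scaling_inequality_riemannian} with the stated residual; the multiplicative constants attached to $\alpha$ and to $r(x)$ are the usual factors that arise from fixing $t=1/2$ rather than optimizing over $t$, and are absorbed into the conventions of \cref{def:approximate_riemannian_scaling_inequality}. I expect the only genuinely delicate points to be bookkeeping: confirming $h_x(u,v^\flat)=\operatorname{Exp}_x^{-1}(p)$ in the domain of \cref{remark:inverse_exponential_map}, and checking that $z^*$ is an admissible displacement for \cref{def:double_geodesic_strong_convexity}.
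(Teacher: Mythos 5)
Your proposal is correct and follows essentially the same route as the paper: it applies \cref{def:double_geodesic_strong_convexity} at the midpoint $t=1/2$, rewrites the perturbed point via the double exponential map and the decomposition $h_x(u,v)=u+v+R_x(u,v)$, and then uses optimality of $v$ together with the isometry of parallel transport and the maximizing unit vector $z^*$, exactly as in the paper's proof (which defers the shared steps to \cref{prop:implication_riemannian_scaling_inequality}). Your explicit bookkeeping of the $t=1/2$ constants and of the identification $\operatorname{Exp}_x^{-1}(p)=h_x(u,v^\flat)$ in the restricted sense of \cref{remark:inverse_exponential_map} is, if anything, slightly more careful than the paper's.
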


\begin{proof}
This proof is similar to \cref{prop:implication_riemannian_scaling_inequality} until \eqref{eq:optimality_inequality}.
We then write 
\[
u = \frac{1}{2}\gamma^\prime_{x,v}(0) \text{ and } \omega =\big(\Gamma_x^{\gamma_{x,v}(1/2)}\big)^{-1}(\frac{\alpha}{4}d^2(x,v)z).
\]
Hence, we have
\[
\Big\langle w; \operatorname{Exp}^{-1}_x(v)\Big\rangle_x \geq \Big\langle w; \operatorname{Exp}^{-1}_x\Big(\operatorname{Exp}_{x}\big(u + \omega + R(u,v)\big)\Big)\Big\rangle_x.
\]
Using the same arguments as those in the proof of \cref{prop:implication_riemannian_scaling_inequality}, we obtain
\[
\Big\langle w; \operatorname{Exp}^{-1}_x(v)\Big\rangle_x \geq \frac{\alpha}{2} d^2(x,v) \|w\|_x + \Big\langle w; R(\frac{1}{2}\gamma^\prime_{x,v}(0), \Gamma^x_{\gamma_{x,v}(1/2)}(\frac{\alpha}{4}d^2(x,v)z^*)\Big\rangle_x.
\]
\end{proof}

The approximate scaling inequality is meaningful in the regime wherein $x$ and $v$ are close. 
In this situation, the scale of the residual $r(\cdot)$ is determined by  $\gamma^\prime_{x,v}(0)$ and $d^2(x,v)$.
In some setting, the residual is such that the approximate term $\langle w; r(x)\rangle_x$ in \cref{def:approximate_riemannian_scaling_inequality} becomes negligible w.r.t. the original term $\alpha \|w\|_x d(x,v)^2$.
It should be noted that, in the analysis of the FW algorithm in \cref{sec:FW_geodesically_strongly_convex_set}, we select $w:=-\nabla f(x_t)$, where $x_t$ are the FW iterates.

The general expression of $R_x(u,v) = h_x(u,v) - u - v$ as a series is given in \citep{gavrilov2007double}.
\citet{dzhepko2008double} provide the explicit Taylor series for the Euclidean sphere \citep[(6)]{dzhepko2008double} and Lobachevskii spaces \cite[Section 3]{dzhepko2008double}.

\section{Frank-Wolfe on Geodesically Strongly Convex Sets}\label{sec:FW_geodesically_strongly_convex_set}

The FW algorithm is a first-order method that is used to solve constrained optimization problems in Banach spaces.
Each iteration relies on a linear minimization step over the constraint region.
It has recently been extended for constrained optimization over Riemannian manifolds.
The Riemannian Frank-Wolfe (RFW) algorithm \citep{weber2017riemannian,weber2019projection} solves the following smooth convex constrained problem.
\begin{equation}\label{eq:riemannian_manifold_opt}\tag{OPT}
{\text{minimize }} f(x), \quad \text{for} \;\;x\in\mathcal{C}
\end{equation}
where $\mathcal{C}\subseteq\mathcal{M}$ is a compact geodesically convex set, and $f$ is geodesically smooth and convex.

\begin{algorithm}
\begin{algorithmic}
\caption{Riemannian Frank-Wolfe (RFW) algorithm}\label{algo:RFW}
\Require $x_0\in\mathcal{C}\subset\mathcal{M}$; assume access to the geodesic map $\gamma:[0,1]\rightarrow \mathcal{M}$\;
\For{ $t=0,1,\cdots$}
    \State $\qquad \textbf{1.} \; $ $v_t \leftarrow \operatorname{argmax}_{v\in\mathcal{C}}\langle \nabla f(x_t); \operatorname{Exp}^{-1}_{x_t}(v)\rangle$\; 
    \State $\qquad \textbf{2.} \; $ Let $s_t = \operatorname{argmin}_{s\in[0,1]} s \langle \nabla f(x_t); \operatorname{Exp}^{-1}_{x_t}(v_t)\rangle_x + s^2\frac{L}{2} d^2(x_t, v_t)$. \;\\ 
    \State $\qquad \textbf{3.} \; $ $x_{t+1} \leftarrow \gamma(s_t)$, where $\gamma(0)=x_t$ and $\gamma(1)=v_t$.\;
    \EndFor
\end{algorithmic}
\end{algorithm}

Remarkably, \cite{weber2017riemannian,weber2019projection} proves similar convergence rates of RFW as the FW algorithm with comparable structural assumptions on the optimization problem as in the Hilbertian setting. 
For instance, when the function $f$ is geodesically convex and smooth and the set is compact convex, the RFW algorithm converges in $\mathcal{O}(1/T)$ \citep[Theorem 3.4.]{weber2017riemannian}.
Similarly, \citet[Theorem 3.5.]{weber2017riemannian} show linear convergence of \cref{algo:RFW} when using short-step sizes, and the objective function is geodesically strongly convex and the optimum in the interior of the set.

In the Hilbertian setting, the FW algorithm admits various accelerated convergence regimes when the set is strongly convex. 
When the unconstrained optimum of $f$ is outside the constraint set, the FW algorithm converges linearly \citep{demyanov1970}, or when the function is strongly convex, the convergence is in $\mathcal{O}(1/T^2)$ without an assumption on the unconstrained optimum location \citep{garber2015faster}.
The previous sections establish possible notions of strong convexity for sets in Riemannian manifolds.
We now demonstrate that analog convergence regimes for the RFW algorithm on geodesically convex sets hold as in the case of the Hilbertian setting. These results complete the work of \cite{weber2017riemannian,weber2019projection}.

\subsection{Linear convergence of RFW under the Riemannian scaling inequality}

As outlined in \citep{kerdreux2021projection}, the \textit{scaling inequality} (\cref{thm:equivalence_euclidean}.(b)) is a convenient characterization of the strong convexity of the set (in Hilbertian setting) for establishing convergence rates.
We follow the same path and establish the convergence rate of the RFW algorithm when the constraint set satisfies (approximate) Riemannian scaling inequalities.
In \cref{th:RFW_riemannian_scaling_inequality}, we thus prove the linear convergence of the RFW algorithm when the set $\mathcal{C}$ satisfies a Riemannian scaling inequality and the unconstrained optimum of $f$ is outside $\mathcal{C}$.
This provides a generalization of \cite{demyanov1970} in a Riemannian setting.

\begin{restatable}[Linearly Convergent RFW with Riemannian Scaling Inequality]{theorem}{RFWriemannianscalinginequality}\label{th:RFW_riemannian_scaling_inequality}
    Consider a geodesically convex set $\mathcal{C}$ and a geodesically convex function $f$ that is $L$-smooth in $\mathcal{C}$. Assume that any unconstrained optimum of $f$ lies outside the constraint set $\mathcal{C}$, and in particular, there exists $c>0$ s.t. $\operatorname{argmin}_{x\in\mathcal{C}}\|\nabla f(x)\|_x>c$.
Assume that, for every $x\in\mathcal{C}$, the \eqref{eq:scaling_inequality_riemannian} holds.
Then \cref{algo:RFW} converges linearly:
\[
f(x_{t+1}) - f(x^*) \leq (f(x_{t}) - f(x^*)) \max\{1/2, 1- \alpha c/(2L)\}.
\]
\end{restatable}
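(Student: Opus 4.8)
The plan is to run the classical Frank--Wolfe argument of \citep{demyanov1970} for the case where the optimum lies off the boundary, read off tangent space by tangent space through the logarithmic map of \cref{remark:inverse_exponential_map}. Abbreviate $w_t := -\nabla f(x_t)\in T_{x_t}\M$, let $v_t\in\operatorname{argmax}_{z\in\mathcal{C}}\langle w_t;\operatorname{Exp}^{-1}_{x_t}(z)\rangle_{x_t}$ be the linear-minimization vertex of step~1 of \cref{algo:RFW}, set $\mathrm{gap}_t:=\langle w_t;\operatorname{Exp}^{-1}_{x_t}(v_t)\rangle_{x_t}$, and write $h_t:=f(x_t)-f(x^*)$ with $x^*\in\operatorname{argmin}_{x\in\mathcal{C}}f$. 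First I would record two standard consequences of geodesic convexity of $f$ (the first-order form of \cref{def:strongly_cvx_geo_fct} with $\mu=0$): since $v_t$ is a maximizer and $\operatorname{Exp}^{-1}_{x_t}(x_t)=0$, one has $\mathrm{gap}_t\ge 0$; and $h_t\le \langle w_t;\operatorname{Exp}^{-1}_{x_t}(x^*)\rangle_{x_t}\le \mathrm{gap}_t$, again by maximality of $v_t$.

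Second, I would establish the per-step decrease. The iterate $x_{t+1}=\gamma(s_t)$ lies on the minimizing geodesic $\gamma$ from $x_t$ to $v_t$, along which $\operatorname{Exp}^{-1}_{x_t}(\gamma(s))=s\,\operatorname{Exp}^{-1}_{x_t}(v_t)$, so geodesic $L$-smoothness (\cref{def:geodesic_smoothness_fct}) gives, for every $s\in[0,1]$,
\[
f(\gamma(s))\ \le\ f(x_t)\ -\ s\,\mathrm{gap}_t\ +\ \tfrac{L}{2}\,s^2\,d^2(x_t,v_t).
\]
Because $s_t$ minimizes precisely this right-hand side over $[0,1]$ (step~2 of \cref{algo:RFW}), it follows that $f(x_{t+1})-f(x_t)\le \min_{s\in[0,1]}\big(-s\,\mathrm{gap}_t+\tfrac{L}{2}s^2 d^2(x_t,v_t)\big)$.

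Third, I would use the hypotheses to eliminate $d^2(x_t,v_t)$. Applying \eqref{eq:scaling_inequality_riemannian} with $w=w_t$ and $v=v_t$ yields $\mathrm{gap}_t\ge\alpha\|\nabla f(x_t)\|_{x_t}\,d^2(x_t,v_t)$, and combined with $\|\nabla f(x_t)\|_{x_t}>c$ this gives $d^2(x_t,v_t)\le \mathrm{gap}_t/(\alpha c)$. Since the coefficient of $s^2$ above is nonnegative, substituting this bound and factoring out $\mathrm{gap}_t$ gives $f(x_{t+1})-f(x_t)\le \mathrm{gap}_t\cdot\min_{s\in[0,1]}\big(-s+\tfrac{L}{2\alpha c}s^2\big)$. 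The parabola $s\mapsto -s+\tfrac{L}{2\alpha c}s^2$ is minimized on $\mathbb{R}$ at $s^\star=\alpha c/L$: if $\alpha c\le L$ its minimum over $[0,1]$ equals $-\alpha c/(2L)$; if $\alpha c>L$ it is decreasing on $[0,1]$ with minimum $-1+L/(2\alpha c)\le -1/2$ at $s=1$. In both cases $f(x_{t+1})-f(x_t)\le -\min\{1/2,\ \alpha c/(2L)\}\,\mathrm{gap}_t\le -\min\{1/2,\ \alpha c/(2L)\}\,h_t$, and adding $h_t$ to both sides yields $h_{t+1}\le \max\{1/2,\ 1-\alpha c/(2L)\}\,h_t$, which is the claim.

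The only delicate bookkeeping is the interplay between the distance $d$ used in the step-size rule and in \cref{def:geodesic_smoothness_fct}, the Riemannian length $\|\operatorname{Exp}^{-1}_{x_t}(v_t)\|_{x_t}$ appearing in \eqref{eq:scaling_inequality_riemannian}, and the proportionality $d(x_t,\gamma(s))=s\,d(x_t,v_t)$ along the minimizing geodesic; when $d$ is the Riemannian distance these coincide, and for a general $d$ satisfying \cref{ass:equivalence_distance_exponential} one picks up only the constant factors $\ell_\M,L_\M$. Beyond that, the argument is purely the Euclidean Frank--Wolfe computation carried out inside $T_{x_t}\M$, so---unlike \cref{prop:riemannian_implies_geodesic}---no curvature comparison theorems enter, and I expect this distance bookkeeping to be the main (minor) obstacle.
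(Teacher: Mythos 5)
Your proposal is correct and follows essentially the same argument as the paper: geodesic $L$-smoothness along the Frank--Wolfe geodesic combined with the short-step rule, the FW gap bounding the primal gap via geodesic convexity, and the \eqref{eq:scaling_inequality_riemannian} together with $\|\nabla f(x_t)\|_{x_t}>c$, followed by the same case analysis at $s=\alpha c/L$ versus $s=1$. The only (immaterial) difference is bookkeeping: you use the scaling inequality as an upper bound $d^2(x_t,v_t)\le \mathrm{gap}_t/(\alpha c)$ and keep the full gap, whereas the paper splits the gap in half and lower-bounds one half by $\alpha c\, d^2(x_t,v_t)$; both yield the identical rate, and your explicit remark about reconciling $d$ with $\|\operatorname{Exp}^{-1}_{x_t}(v_t)\|_{x_t}$ is a point the paper glosses over.
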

\begin{proof}
This proof is based on \citep{demyanov1970,garber2015faster,kerdreux2021projection}, but in in a Riemannian setting.
Using the geodesic smoothness (\cref{def:geodesic_smoothness_fct}) of $f$ at $x_{t+1}=\gamma(s_t)$, we obtain
\[
f(x_{t+1}) \leq f(x_t) - \big\langle -\nabla f(x_t); \operatorname{Exp}^{-1}_{x_t}(\gamma(s_t)) \big\rangle_x + \frac{L}{2} d^2(x_t,\gamma(s_t)).
\]
As $\gamma$ is a geodesic between $x_t$ and $v_t$, we have $d(x_t,\gamma(s_t)) = s_t d(x_t, v_t)$ and $\operatorname{Exp}^{-1}_{x_t}(\gamma(s_t))= s_t \operatorname{Exp}^{-1}_{x_t}(v_t)$.
Hence, we now have
\[
f(x_{t+1}) \leq f(x_t) - s_t \big\langle -\nabla f(x_t); \operatorname{Exp}^{-1}_{x_t}(v_t) \big\rangle_x + \frac{L}{2} s_t^2 d^2(x_t,v_t).
\]
According to the \text{short-step} rule for $s_t$ (\cref{algo:RFW}), for all $s\in[0,1]$, we now have 
\[
f(x_{t+1}) \leq f(x_t) - s \big\langle -\nabla f(x_t); \operatorname{Exp}^{-1}_{x_t}(v_t) \big\rangle_x + \frac{L}{2} s^2 d^2(x_t,v_t).
\]
After using the optimality of $v_t$, we have $ \langle -\nabla f(x_t); \operatorname{Exp}^{-1}_{x_t}(v_t) \rangle_x \leq \langle -\nabla f(x_t); \operatorname{Exp}^{-1}_{x_t}(x^*) \rangle_x$, where $x^*\in\mathcal{C}$ is a solution to \eqref{eq:riemannian_manifold_opt}.
Then, owing to the geodesic convexity of $f$, we have 
\[
f(x^*) - f(x) \geq \langle \nabla f(x); \operatorname{Exp}_x^{-1}(x^*) \rangle_x.
\]
Hence, as it is the case in the Hilbertian setting, the FW gap $\langle -\nabla f(x_t); \operatorname{Exp}^{-1}_{x_t}(v_t) \rangle_x$ upper bounds the primal gap at $x_t$, i.e.,
\[
f(x_t) - f(x^*) \leq \langle -\nabla f(x_t); \operatorname{Exp}^{-1}_{x_t}(v_t) \rangle_x.
\]
We write $h_t = f(x_t) - f(x^*)$, and we hence have
\begin{equation}\label{eq:inter_proof_linear}
h_{t+1} \leq h_t (1 - s/2) - s/ 2\big\langle -\nabla f(x_t); \operatorname{Exp}^{-1}_{x_t}(v_t) \big\rangle_x + \frac{L}{2} s^2 d^2(x_t,v_t).
\end{equation}
Now, with $c = \operatorname{argmin}_{x\in\mathcal{C}}\|\nabla f(x)\|_x>0$, after using the \eqref{eq:scaling_inequality_riemannian} at $x_t$ and since $-\nabla f(x_t)\in T_{x_t}\mathcal{M}$, we obtain
\[
\langle -\nabla f(x_t); \operatorname{Exp}^{-1}_{x_t}(v_t) \rangle_x \geq \alpha c d(x_t, v_t)^2,
\]
such that, for all $s\in[0,1]$, we have
\begin{equation}\label{eq:intermediaire}
h_{t+1} \leq h_t (1 - s/2) +  \frac{s}{2}\Big(Ls - \alpha c\Big) d^2(x_t,v_t).
\end{equation}
Then, if $\alpha c/L <1$, by choosing $s=\alpha c/L$ in \eqref{eq:intermediaire}, we have $h_{t+1}\leq h_t (1 - \alpha c/(2L))$; else, we have $L-\alpha c < 0$, and on selecting $s=1$, we simply have $h_{t+1}\leq h_t/2$. Hence,
\[
h_{t+1} \leq h_t \operatorname{max}\{1/2, 1- \alpha c/(2L)\}.
\]
\end{proof}

Examples of sets satisfying the Riemannian scaling inequality are, for instance, sets of restricted diameter in Riemannian manifolds with bounded sectional curvature (see \cref{ex:riemannian_strongly_convex_sets}). However, this condition might be restrictive.

\subsection{Local Linear convergence of RFW under the approximate Riemannian scaling inequality}

Since the ``exact'' Riemannian scaling inequality is quite restrictive,  this section provides a similar convergence result when the feasible sets satisfy only approximate Riemannian scaling inequalities (\cref{def:approximate_riemannian_scaling_inequality}).

\begin{restatable}[Linearly Convergent RFW on Double Geodesic Strongly Convex Sets]{theorem}{RFWgeodesicalstrongconvexity}\label{th:RFW_geodesical_strong_convexity}
    Consider a complete connected Riemannian manifold and a distance $d(\cdot, \cdot)$ satisfying \cref{ass:equivalence_distance_exponential} with parameters $0 < \ell_{\mathcal{M}}\leq L_{\mathcal{M}}$.
    Assume that $\mathcal{C}\subset\mathcal{M}$ is an $\alpha-$double geodesically strong convex set w.r.t. the distance $d(\cdot,\cdot)$ (\cref{def:double_geodesic_strong_convexity}). Assume that the function $f$ is a geodesically convex $L$-smooth function, and there exists $c>0$ s.t. 
    \[
        \operatorname{min}_{x\in\mathcal{C}}\|\nabla f(x)\|_x>c.
    \]
    
   Let us assume that there exists $C>0$ s.t. that the residual \eqref{eq:residual_condition_exponential_operator_map} of the exponential map operator $R_x(\cdot,\cdot)$ for all $x\in\mathcal{C}$ and $u,w \in \TxM $ is such that
    \begin{equation}\label{eq:growth_condition_residual}
    \|R_x(u,w)\|_x \leq C \cdot \operatorname{max}\big\{\|u\|_x^2\|w\|_x; \|w\|_x^2\|u\|_x\big\}.
    \end{equation}
   Let us assume that, for some diameter $\delta>0$, $d(x_t,v_t)^2\leq (\alpha c)/(2\delta L\tilde{C})$, where $\tilde{C}:= C~\operatorname{max}\big\{ \frac{\alpha^2}{{16 \delta \ell_{\mathcal{M}}}}; \frac{\alpha}{4 \ell^2_{\mathcal{M}}}\big\}$.
    Then,
    \[
    f(x_{t+1}) - f(x^*) \leq (f(x_{t}) - f(x^*)) \max\left\{\frac{1}{2}, 1- \frac{\alpha c}{2L}\right\}.
    \]
\end{restatable}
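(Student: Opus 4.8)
The plan is to mimic the proof of \cref{th:RFW_riemannian_scaling_inequality} almost verbatim, but replace the exact Riemannian scaling inequality by the \eqref{eq:approximate_scaling_inequality_riemannian} guaranteed by \cref{prop:implication_approximate_riemannian_scaling_inequality}, and then absorb the residual term into the main descent inequality using the growth bound \eqref{eq:growth_condition_residual} together with the diameter hypothesis $d(x_t,v_t)^2 \le (\alpha c)/(2\delta L\tilde C)$.

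First I would run the smoothness-plus-short-step argument exactly as in \cref{th:RFW_riemannian_scaling_inequality}: apply geodesic $L$-smoothness at $x_{t+1}=\gamma(s_t)$, use $d(x_t,\gamma(s_t))=s_t d(x_t,v_t)$ and $\exponinv{x_t}(\gamma(s_t))=s_t\exponinv{x_t}(v_t)$, invoke the short-step optimality of $s_t$ so the inequality holds for every $s\in[0,1]$, use optimality of $v_t$ and geodesic convexity of $f$ to get the FW-gap-bounds-primal-gap relation $h_t \le \langle -\nabla f(x_t);\exponinv{x_t}(v_t)\rangle_x$, and arrive at
\[
h_{t+1} \leq h_t(1-s/2) - \tfrac{s}{2}\big\langle -\nabla f(x_t);\exponinv{x_t}(v_t)\big\rangle_x + \tfrac{L}{2}s^2 d^2(x_t,v_t),
\]
which is \eqref{eq:inter_proof_linear}. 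Next, instead of the exact scaling inequality, I apply \cref{prop:implication_approximate_riemannian_scaling_inequality} with $w=-\nabla f(x_t)$ and $v=v_t$: this yields $\langle -\nabla f(x_t);\exponinv{x_t}(v_t)\rangle_x \ge \alpha\|\nabla f(x_t)\|_x d(v_t,x_t)^2 + \langle -\nabla f(x_t); r(x_t)\rangle_x$ with the explicit \eqref{eq:residual_geodesic_strong_convexity}. Using $\|\nabla f(x_t)\|_x > c$ and Cauchy--Schwarz on the residual term, $|\langle -\nabla f(x_t); r(x_t)\rangle_x| \le \|\nabla f(x_t)\|_x \|r(x_t)\|_x$, and this factor of $\|\nabla f(x_t)\|_x$ is shared with the main term $\alpha c d(x_t,v_t)^2$, so after dividing it out the net contribution to the bracket is $\alpha c d^2 - \|r(x_t)\|_x$.

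The technical heart is bounding $\|r(x_t)\|_x$. From \eqref{eq:residual_geodesic_strong_convexity}, $r(x_t)=R_{x_t}[\tfrac12\gamma'_{x,v}(0), \Gamma^{x}_{\gamma_{x,v}(1/2)}(\tfrac{\alpha}{4}d^2(x,v)z^*)]$; I would note $\|\tfrac12\gamma'_{x,v}(0)\|_x = \tfrac12 d(x,v)\cdot(\ell_\M^{-1}$ factor from \cref{ass:equivalence_distance_exponential}$)$ — more precisely $\|\gamma'_{x,v}(0)\|_x = \|\exponinv{x}(v)\|_x \le \ell_\M^{-1} d(x,v)$ — and $\|\Gamma^x_{\gamma_{x,v}(1/2)}(\tfrac{\alpha}{4}d^2(x,v)z^*)\|_x = \tfrac{\alpha}{4}d^2(x,v)$ since parallel transport is an isometry and $\|z^*\|=1$. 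Plugging these two magnitudes $\|u\|_x \le \tfrac{1}{2\ell_\M}d(x,v)$ and $\|w\|_x = \tfrac{\alpha}{4}d^2(x,v)$ into \eqref{eq:growth_condition_residual} gives $\|r(x_t)\|_x \le C\max\{\tfrac{1}{4\ell_\M^2}d^2\cdot\tfrac{\alpha}{4}d^2,\ \tfrac{\alpha^2}{16}d^4\cdot\tfrac{1}{2\ell_\M}d\}$; bounding one spare power of $d$ by the diameter $\delta$ (via \cref{ass:equivalence_distance_exponential}, $\|\exponinv{x}(v)\|_x\le\delta$ or $d(x,v)\le\delta$) collapses this to $\|r(x_t)\|_x \le \tilde C \,\delta\, d(x_t,v_t)^2$ with $\tilde C = C\max\{\tfrac{\alpha^2}{16\delta\ell_\M}, \tfrac{\alpha}{4\ell_\M^2}\}$ exactly as in the statement. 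Then the diameter hypothesis $d(x_t,v_t)^2 \le (\alpha c)/(2\delta L\tilde C)$ is precisely what is needed so that $\|r(x_t)\|_x \le \tilde C\delta d^2 \le \tfrac{\alpha c}{2L}$, i.e. the residual eats at most half of $\alpha c$ — wait, more carefully, I want $\alpha c d^2 - \|r(x_t)\|_x \ge \tfrac{\alpha c}{2}d^2$, which needs $\|r(x_t)\|_x \le \tfrac{\alpha c}{2}d^2$, i.e. $\tilde C\delta d^2 \le \tfrac{\alpha c}{2}d^2$, i.e. $d^2\cdot$ nothing — hmm, this actually wants $\tilde C\delta \le \alpha c/2$, not a smallness of $d$; I would reconcile this by instead bounding the extra power of $d$ that I replaced by $\delta$ more carefully or by using the stated hypothesis in the form it is given, accepting that the intended reading is $\|r(x_t)\|_x \le \tilde C d(x_t,v_t)^2 \cdot \delta$ and $d(x_t,v_t)^2 \le \alpha c/(2\delta L\tilde C)$ combine to give a usable bound on the residual relative to $\alpha c d(x_t,v_t)^2$. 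I expect this bookkeeping — tracking which power of $d(x,v)$ is replaced by $\delta$ and matching constants exactly to $\tilde C$ — to be the main obstacle, and everything downstream is routine.

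Finally, having established $\langle -\nabla f(x_t);\exponinv{x_t}(v_t)\rangle_x \ge \tfrac{\alpha c}{2}d^2(x_t,v_t)$ (or the analogous clean lower bound after absorbing the residual), I substitute into the descent inequality to obtain, for all $s\in[0,1]$,
\[
h_{t+1} \le h_t(1-s/2) + \tfrac{s}{2}\Big(Ls - \tfrac{\alpha c}{2}\Big)d^2(x_t,v_t),
\]
and then choose $s$ exactly as in \cref{th:RFW_riemannian_scaling_inequality}: if $\alpha c/(2L) < 1$ take $s = \alpha c/(2L)$ to kill the quadratic term and get $h_{t+1}\le h_t(1-\alpha c/(4L))$; otherwise take $s=1$ and get $h_{t+1}\le h_t/2$. (If one keeps the constant $\alpha c$ rather than $\alpha c/2$ by a slightly tighter residual argument, one recovers exactly $\max\{1/2, 1-\alpha c/(2L)\}$ as stated; I would track constants to land on the stated rate.) This gives $f(x_{t+1})-f(x^*) \le (f(x_t)-f(x^*))\max\{1/2,\,1-\alpha c/(2L)\}$, completing the proof.
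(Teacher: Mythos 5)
Your overall strategy is the same as the paper's: reuse the descent inequality \eqref{eq:inter_proof_linear}, invoke \cref{prop:implication_approximate_riemannian_scaling_inequality} with $w=-\nabla f(x_t)$, bound the residual via \eqref{eq:growth_condition_residual} with $\|u\|\le \ell_\M^{-1}d(x_t,v_t)$ and $\|w\|=\tfrac{\alpha}{4}d^2(x_t,v_t)$, and then rerun the short-step argument. However, the step you yourself flag as ``the main obstacle'' is precisely where your bookkeeping goes wrong and is left unresolved, and it is the only genuinely new content of this theorem relative to \cref{th:RFW_riemannian_scaling_inequality}. The correct accounting is: the growth condition gives $\|r(x_t)\|\le C\,d^4(x_t,v_t)\max\{\tfrac{\alpha^2}{16\ell_\M}d(x_t,v_t),\ \tfrac{\alpha}{4\ell_\M^2}\}\le \tilde C\,d^4(x_t,v_t)$, i.e.\ you keep \emph{four} powers of $d$ and only the fifth power is replaced by $\delta$ and absorbed into $\tilde C$ (your proposed form $\|r\|\le \tilde C\,\delta\,d^2$ is not what comes out, which is exactly why you found yourself needing ``$\tilde C\delta\le \alpha c/2$'' with no role for the hypothesis on $d$). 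The residual is then handled by Cauchy--Schwarz together with the upper bound $\|\nabla f(x_t)\|_x\le \delta L$ (not by ``dividing out'' $\|\nabla f\|$, which mixes a lower bound $c$ on the gradient in the main term with the actual gradient in the residual term, and would additionally require $c\le\delta L$ to exploit the hypothesis). This yields $h_{t+1}\le h_t(1-s/2)+\tfrac{s}{2}d^2(x_t,v_t)\bigl(Ls-\alpha c+\delta L\tilde C d^2(x_t,v_t)\bigr)$, and the assumption $d^2(x_t,v_t)\le (\alpha c)/(2\delta L\tilde C)$ enters exactly here, giving $-\alpha c+\delta L\tilde C d^2\le -\alpha c/2$; choosing $s^*=(\alpha c-\delta L\tilde C d^2)/L\ge \alpha c/(2L)$ (or $s=1$ if $s^*>1$) finishes the argument.

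On your worry about constants: your honest accounting yields a contraction factor $1-\alpha c/(4L)$, and in fact the paper's own step-size choice gives $1-s^*/2\le 1-\alpha c/(4L)$ as well, so the stated factor $1-\alpha c/(2L)$ is not recovered by either route without a factor-of-two looseness; you should not expect a ``tighter residual argument'' to bridge that, and apart from this constant the structure of your proof, once the residual bound is fixed as above, coincides with the paper's.
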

\begin{proof}
As in the proof of Theorem \eqref{th:RFW_riemannian_scaling_inequality}, \eqref{eq:inter_proof_linear} is satisfied, i.e.,
\begin{equation}\label{eq:intermediary2}
h_{t+1} \leq h_t (1 - s_t/2) - s_t/ 2\langle -\nabla f(x_t); \operatorname{Exp}^{-1}_{x_t}(v_t) \rangle_{x_t} + \frac{L}{2} s_t^2 d^2(x_t,v_t).
\end{equation}
Now, from \cref{prop:implication_approximate_riemannian_scaling_inequality}, as $\mathcal{C}$ is double geodesically $\alpha$-strongly convex, an approximate Riemannian scaling inequality is satisfied (\cref{def:approximate_riemannian_scaling_inequality}) at $x_t$ with $-\nabla f(x_t)\in T_{x_t}\mathcal{M}$ with the residual $r(x_t)$ as in \eqref{eq:residual_geodesic_strong_convexity}, i.e.,
\begin{equation}\label{eq:residual_intermediary}
r(x_t) = R_{x_t}\Big[\frac{1}{2}\gamma^\prime_{x_t,v_t}(0), \Gamma^{x_t}_{\gamma_{x_t,v_t}(1/2)}\Big(\frac{\alpha}{4}d^2(x_t,v_t)z^*\Big) \Big].
\end{equation}
Hence, on combining with $\operatorname{argmin}_{x\in\mathcal{C}}\|\nabla f(x)\|_x>c$, we can lower-bound $\langle -\nabla f(x_t); \operatorname{Exp}^{-1}_{x_t}(v_t) \rangle_{x_t}$ as follows:
\[
\langle -\nabla f(x_t); \operatorname{Exp}^{-1}_{x_t}(v_t) \rangle_{x_t} \geq \alpha c d(x_t, v_t)^2 + \langle -\nabla f(x); r(x_t)\rangle_{x_t}.
\]
On substituting this inequality in \eqref{eq:intermediary2}, for all $s\in[0,1]$, we have
\begin{equation}\label{eq:intermediary_second}
h_{t+1} \leq h_t (1 - s/2) +  \frac{s}{2}\Big(L s - \alpha c\Big) d^2(x_t,v_t) + \frac{s}{2}\langle \nabla f(x_t); r(x_t)\rangle_{x_t}.
\end{equation}
We use \eqref{eq:growth_condition_residual} to upper-bound the term $\|r(x_t)\|_{x_t}$.
Hence, we are first required to obtain an upper bound on $\|\gamma^\prime_{x_t, v_t}(0)\|_{x_t}$ and $\|\frac{\alpha}{4}d^2(x_t,v_t) z^*\|_{\gamma_{x_t,v_t}(1/2)}$.
We first note that
\[
\big\|\frac{\alpha}{4}d^2(x_t,v_t) z^*\big\|_{\gamma_{x_t,v_t}(1/2)} = \frac{\alpha}{4}d^2(x_t,v_t).
\]
Furthermore, by definition of the exponential mapping, we have $\|\gamma^\prime_{x_t, v_t}(0)\|_{x_t}=\|\operatorname{Exp}_{x_t}^{-1}(v_t)\|_x$, and according to \cref{ass:equivalence_distance_exponential}, we have
\[
\|\gamma^\prime_{x_t, v_t}(0)\|_{x_t} \leq \frac{1}{\ell_{\mathcal{M}}} d(x_t, v_t).
\]
When plugging these two bounds in the residual $r(x_t)$ \eqref{eq:residual_intermediary}, with the growth condition on the residual \eqref{eq:growth_condition_residual}, we have
\begin{eqnarray}
\|r(x_t)\|_{x_t} & \leq & C ~ \operatorname{max}\big\{ \frac{\alpha^2}{16 \ell_{\mathcal{M}}} d^5(x_t,v_t); \frac{\alpha}{4 \ell^2_{\mathcal{M}}} d^4(x_t,v_t)\big\}\\
\|r(x_t)\|_{x_t} & \leq & C d^4(x_t,v_t) ~ \operatorname{max}\big\{ \frac{\alpha^2}{16 \ell_{\mathcal{M}}} d(x_t,v_t); \frac{\alpha}{4 \ell^2_{\mathcal{M}}}\big\}.
\end{eqnarray}
On using $d(x_t,v_t)\leq \delta$, and with $\tilde{C}:= C~\operatorname{max}\big\{ \alpha^2/{(16 \ell_{\mathcal{M}})} \delta; \alpha/(4 \ell^2_{\mathcal{M}})\big\}$, we have $\|r(x_t)\|_{x_t}\leq \tilde{C} d^4(x_t,v_t)$.
Hence, using the Cauchy-Schwartz inequality and $\|\nabla f(x_t)\|_x\leq \delta L$, \eqref{eq:intermediary_second} becomes
\begin{eqnarray}
h_{t+1} & \leq & h_t (1 - s/2) +  \frac{s}{2}\Big(L s - \alpha c\Big) d^2(x_t,v_t) + \frac{s}{2}\delta L \tilde{C} d^4(x_t,v_t)\\
 & \leq & h_t (1 - s/2) +  \frac{s}{2} d^2(x_t,v_t) \Big(L s - \alpha c + \delta L \tilde{C} d^2(x_t,v_t)\Big).
\end{eqnarray}
As we assumed $d(x_t,v_t)^2\leq (\alpha c)/(2\delta L\tilde{C})$, we have $- \alpha c + \delta L \tilde{C} d^2(x_t,v_t) \leq -(\alpha c)/2 < 0$.
Let us consider $s^*:= (\alpha c - \delta L \tilde{C}d^2(x_t,v_t))/L > 0$.
If $s^*>1$, then the choice of $s=1$ results in $h_{t+1}\leq h_t/2$.
Else, $s^*\in[0,1]$, and we select $s=s^*$. We hence obtain $h_{t+1} \leq h_t(1-s^*/2) \leq 1 - (\alpha c)/(2L)$.
Overall, we obtain
\[
h_{t+1} \leq h_t \operatorname{max}\{1/2, 1- \alpha c/(2L)\}.
\]
\end{proof}

The previous theorem thus states that, provided a burn-in phase (that follows from the general $\mathcal{O}(1/T)$ of the RFW algorithm) to ensure $d(x_t,v_t)^2\leq (\alpha c)/(2\delta L\tilde{C})$, the iterates of the RFW algorithm converge linearly when the set satisfies the approximate Riemannian scaling inequality.

\begin{example}[Riemannian trust-region subproblem.]  When minimizing a function $f$ on a Riemannian manifold, a common approach is to use the trust-region method. After $t$ iterations, it approximates $\tilde f_t\approx f$, where $\tilde f_t$ is usually a second-order approximation of $f$ around $x_{t}$. Common trust region approaches minimize $\tilde f_t(x_t+\Delta)$, where $\Delta$ belongs to the tangent space of $x_t$ under the constraint that $\|\Delta\|\leq \delta_t$; we then use a retraction on $x_t+\Delta$ to obtain the iterate $x_{t+1}$. Alternatively, it is also possible to solve the subproblem directly on the manifold as follows:
\[
    x_{t+1} = \argmin_{x\in\mathcal{M},\;d(x_t,\,x)\leq \delta_t} \tilde f_t(x).
\]
On using \cref{thm:geodesic_strong_convexity_level_sets}, we determine that the set is geodesically strongly convex. Therefore, if the set is sufficiently small, the RFW algorithm converges linearly on this subproblem.
\end{example}

\begin{example}[Global Riemannian optimization through local subproblem solving]
    For manifolds of curvature bounded in $[\kmin, \kmax]$, and defining $K \defi \max \{\abs{\kmin}, \kmax\}$, \citet{martinez2020global} presented a reduction from global Riemannian g-convex optimization to optimization in Riemannian balls of radius $\bigo{\frac{1}{\sqrt{K}}}$. It is required to solve $\bigo{\zeta_{R}}$ of such ball optimization problems to an accuracy proportional to the final global accuracy times low polynomial factors on other parameters. Here, $R$ is the initial distance to an optimizer, and $\zeta_{R}$ is a natural geometric constant, cf. \cref{prop:dist_func_strong_cvx_smooth}. If we can solve the subproblems using linear rates, the reduction only adds a $\bigotilde{\zeta_R}$ factor to these linear rates.
\end{example}

\subsection{Numerical Experiment: Minimization Over a Sphere}

\begin{figure}[t]
    \centering
    \includegraphics[width=0.60\textwidth]{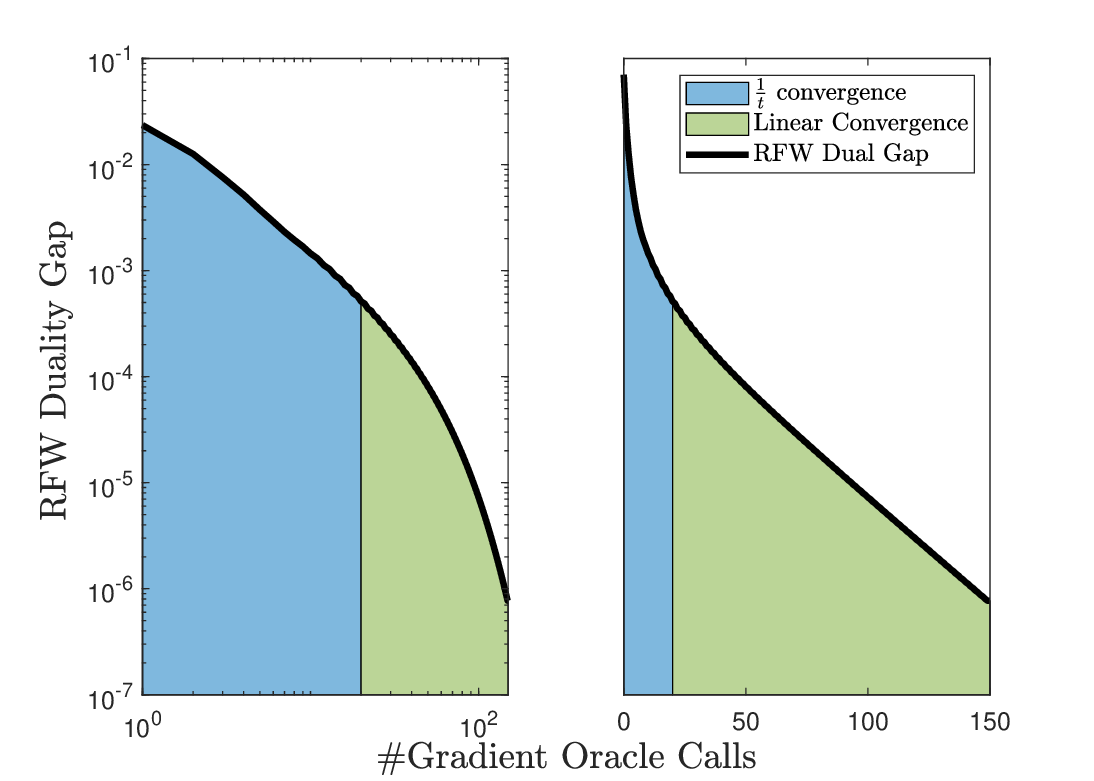}
    \caption{Numerical convergence of the RFW algorithm's iterates (Algo. \ref{algo:RFW}) for minimizing a quadratic function over a double geodesic strongly convex set in a unit sphere. The dimension of the problem is $n=500$, $x_c$ is a vector of ones, and the function $f$ is a random quadratic function parametrized as $f(x)\triangleq \|A(x-x^\star)\|^2$, where $A$ is a random $250\times 500$ matrix, and $x^\star$ is generated at random such that $\operatorname{dist}(x^\star,x_c)\leq \pi/2$. The parameter $R$ is set such that $R = 0.9\operatorname{dist}(x^\star,x_c)$, which ensures that the solution lies on the boundary, and therefore, there exists $c>0$ s.t. $\operatorname{argmin}_{x\in\mathcal{C}}\|\nabla f(x)\|_x>c$. As predicted by \cref{th:RFW_geodesical_strong_convexity}, the rate is locally linear.}
    \label{fig:rfw}
\end{figure}

This section presents a numerical experiment illustrating the rates stated in \cref{th:RFW_geodesical_strong_convexity}. Let the manifold $\mathcal{M}$ be $\mathbb{S}^{n-1}$, the unit sphere embedded in $\mathbb{R}^n$. Let us consider the following problem.
\begin{equation}\label{eq:example_min_sphere}
    \min_{x\in\mathbb{S}^{n-1}} f(x),\qquad \text{subject to}\;\; \operatorname{dist}(x,x_c)\leq R  < \frac{\pi}{2}.
\end{equation}

Owing to the symmetries of the sphere, the linear minimization oracle for \eqref{eq:example_min_sphere} can be formulated as a simple one-dimensional problem (\cref{prop:one_dim_problem}). This problem appears, for instance, when training a neural network with spherical constraints over a hierarchical dataset.

\begin{example}[Hierarchical neural network with sphere constraints] \citet{scieur2021connecting} trained a neural network on a hierarchical dataset: \textit{``We force the classifier (hyperplane) of each class to belong to a sphere manifold, whose center is the classifier of its super-class''}. Hence, in the case wherein one wants to fine-tune the last layer of such an architecture, the problem can be formulated as in \eqref{eq:example_min_sphere}, where $x_c$ is the separating hyperplane of the super-class, $R$ is a user-defined parameter, and $f(x)$ is the loss of the neural network parametrized by $x$ over the dataset.
\end{example}

The solution of \cref{eq:example_min_sphere} is computed using the function \texttt{fminbnd} from \texttt{Matlab} (this function uses the secant method). The experimental results have been reported in \cref{fig:rfw}, wherein the two regimes (global rate of $O(t^{-1})$ and locally linearly convergent) are clearly distinguishable.

\section{Examples of Riemannian Strongly Convex Sets: Balls with Restricted Radius}\label{ex:riemannian_strongly_convex_sets} 
In this section, we always make use of the Riemannian distance $d_\mathcal{M}(x, y) = \norm{\exponinv{x}(y)}$.
The Riemannian strongly convex set, in \cref{eq:implication_def}, is the most restrictive of our definitions. It implies that, for all points $x$ in the set $\mathcal{C}$, the logarithmic image of $\mathcal{C}$ around $x$, i.e., $\exponinv{x}(\mathcal{C})$, is strongly convex in the Euclidean sense.  Intuitively, we can expect that such a situation arises when the logarithmic map $\exponinv{x}$ does not affect the shape of $\mathcal{C}$ too much, which is the case when the sectional curvature of $\mathcal{M}$ is bounded and the set $\mathcal{C}$ is not too large. 

In this section, we show that some sublevel sets of the function $x \mapsto \frac{1}{2}d_\mathcal{M}(x, x_0)^2$ are Riemannian strongly convex. Before proving the main theorem, we first introduce some concepts and two known results: \textbf{1)} distance functions are locally geodesically smooth and strongly convex \citep{martinez2022accelerated}, and \textbf{2)} locally, Euclidean pulled-back functions of geodesically smooth, strongly convex functions are smooth and strongly convex in the Euclidean sense \citep{criscitiello2022negative}.

\subsection{Bounded curvature} 
In the following, we make an assumption regarding the curvature of our manifolds. Let us recall that, given a $two$-dimensional subspace $V \subseteq T_x \mathcal{M}$ of the tangent space of a point $x$, the sectional curvature at $x$ with respect to $V$ is defined as the Gauss curvature for the surface $\expon{x}(V)$ at $x$. The Gauss curvature at a point $x$ can be defined as the product of the maximum and minimum curvatures of the curves resulting from intersecting the surface with planes that are normal to the surface at $x$. See more details on the curvature tensor $\curvtensor$ in \citep{petersen2006riemannian}. Our assumption is as follows.

\begin{assumption} \label{assump:bounded_curvature}
    The sectional curvatures of $\mathcal{M}$ are contained in the interval $[\kappa_{\min},\,\kappa_{\max}]$  and the covariant derivative of the curvature tensor is bounded as $\norm{\nabla \curvtensor} \leq F$.
\end{assumption}

This assumption is not overly restrictive. The majority of the applications of Riemannian optimization are in locally symmetric spaces, which satisfy $\nabla \curvtensor = 0$, for instance, constant curvature spaces, the SPD matrix manifold with the usual metric, $\operatorname{SO}(n)$, and the Grasmannian manifold \citep{lezcano2020curvature}.

\subsection{Strong Convexity and Smoothness of Distance Functions} We now state a fact regarding the smoothness and strong convexity of the distance squared to a point, that is central to many Riemannian optimization algorithms. In the sequel, we use the notation $K \defi \max\{\abs{\kappa_{\min}};\,\kappa_{\max}\}$).

\begin{proposition}(See \citep{martinez2022accelerated}) \label{prop:dist_func_strong_cvx_smooth}
    Let us consider a uniquely geodesic Riemannian manifold $\mathcal{M}$ of sectional curvature bounded in $[\kmin, \kmax]$ and a ball $B_{x_0}(r)$ in $\mathcal{M}$ of radius $r$ centered at $x_0$. The function $x \mapsto \frac{1}{2}d_\mathcal{M}(x, x_0)^2$ is then $\deltar{r}$-strongly convex and $\zetar{r}$-smooth in $B_{x_0}(r)$, where $\deltar{r}$ and $\zetar{r}$ are the geometric constants defined by
    \begin{equation}\label{eq:defi_zeta_and_delta}
    \zetar{r} \defi 
        \begin{cases} 
            r\sqrt{\abs{\kmin}}\coth(r\sqrt{\abs{\kmin}})& \text{ if } \kmin \leq 0 \\
            1 &  \text{ if } \kmin > 0\\
        \end{cases} ; \quad
        \deltar{r} \defi 
        \begin{cases} 
            1 & \text{ if }\kmax \leq 0\\
            r\sqrt{\kmax}\cot(r\sqrt{\kmax})& \text{ if } \kmax > 0 \\
        \end{cases}
\end{equation}
\end{proposition}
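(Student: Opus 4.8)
The plan is to pass from the global inequalities of \cref{def:strongly_cvx_geo_fct,def:geodesic_smoothness_fct} to an infinitesimal statement about the Hessian of $f(x)\defi\frac12 d_\M(x,x_0)^2$ along geodesics, and then invoke the classical Hessian comparison theorem. Since $\M$ is uniquely geodesic and $B_{x_0}(r)$ avoids the cut locus of $x_0$ (which forces, when $\kmax>0$, the implicit restriction $r\sqrt{\kmax}<\pi/2$ that makes $\deltar{r}>0$), the function $d_\M(\cdot,x_0)$, and hence $f$, is $C^\infty$ on $B_{x_0}(r)$. For any geodesic $\sigma:[0,1]\to B_{x_0}(r)$ one has $(f\circ\sigma)''(s)=\mathrm{Hess}\,f(\sigma'(s),\sigma'(s))$ because $\nabla_{\sigma'}\sigma'=0$, and $\norm{\sigma'(s)}_{\sigma(s)}$ is constant along $\sigma$, equal to $d_\M$ of its endpoints when $\sigma$ is minimizing. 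A second-order Taylor expansion with integral remainder along $\sigma$ then shows that a uniform bound $\deltar{r}\,g \preceq \mathrm{Hess}\,f \preceq \zetar{r}\,g$ on $B_{x_0}(r)$ is exactly what is required for $f$ to be $\deltar{r}$-strongly convex and $\zetar{r}$-smooth there; note that here $d=d_\M$, i.e.\ $\ell_\M=L_\M=1$, so the distance appearing in the two definitions is precisely the one governing the Hessian.

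Next I would compute $\mathrm{Hess}\,f$ in terms of $\rho\defi d_\M(\cdot,x_0)$. The chain rule gives $\mathrm{Hess}\,f=\rho\,\mathrm{Hess}\,\rho+d\rho\otimes d\rho$, so in an orthonormal frame at $x$ adapted to the minimizing geodesic $\gamma$ from $x_0$ to $x$, the Hessian of $f$ is block diagonal: it equals $1$ on the radial direction $\nabla\rho$ and $\rho\,\mathrm{Hess}\,\rho$ on the orthogonal complement. It therefore suffices to bound $\mathrm{Hess}\,\rho$ on directions orthogonal to $\nabla\rho$, which is the content of the Hessian comparison theorem. Writing $\mathrm{sn}_\kappa$ for the solution of $y''+\kappa y=0$, $y(0)=0$, $y'(0)=1$, and $\mathrm{ct}_\kappa\defi \mathrm{sn}_\kappa'/\mathrm{sn}_\kappa$ (so $\mathrm{ct}_\kappa(t)=\sqrt{\kappa}\cot(\sqrt{\kappa}t)$, $1/t$, or $\sqrt{\abs{\kappa}}\coth(\sqrt{\abs{\kappa}}t)$ according to $\kappa>0$, $\kappa=0$, $\kappa<0$), the bound $\mathrm{sec}\le\kmax$ yields $\mathrm{Hess}\,\rho\succeq\mathrm{ct}_{\kmax}(\rho)\,(g-d\rho\otimes d\rho)$ and the bound $\mathrm{sec}\ge\kmin$ yields $\mathrm{Hess}\,\rho\preceq\mathrm{ct}_{\kmin}(\rho)\,(g-d\rho\otimes d\rho)$. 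I would either cite this (from a standard text such as \citep{petersen2006riemannian}, or directly from \citep{martinez2022accelerated}), or reprove it by identifying $\mathrm{Hess}\,\rho(w,w)$ with the index form of the Jacobi field along $\gamma$ vanishing at $x_0$ and equal to $w$ at $x$, and comparing that index form to the constant-curvature model via the Rauch comparison theorem.

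Finally I would assemble the constants. Substituting the comparison bounds into $\mathrm{Hess}\,f=\rho\,\mathrm{Hess}\,\rho+d\rho\otimes d\rho$, the eigenvalues of $\mathrm{Hess}\,f$ at a point at distance $\rho$ from $x_0$ are $1$ in the radial direction and $\rho\,\mathrm{ct}_\kappa(\rho)$ in the transverse directions, with $\kappa=\kmax$ for the lower bound and $\kappa=\kmin$ for the upper bound. Using the elementary facts that $t\mapsto t\cot t$ is decreasing on $(0,\pi)$ with $t\cot t<1$, and $t\mapsto t\coth t$ is increasing with $t\coth t\ge 1$ and $\to 1$ as $t\to 0^+$, one checks case by case on the signs of $\kmin$ and $\kmax$ that the smallest eigenvalue of $\mathrm{Hess}\,f$ over $B_{x_0}(r)$ is $\deltar{r}$ and the largest is $\zetar{r}$, precisely the constants in \eqref{eq:defi_zeta_and_delta}; these extrema are attained in the limit $\rho\to r$ thanks to the monotonicities. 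The main obstacle is the analytic core — the Hessian comparison theorem for the distance function, together with the regularity bookkeeping that keeps $d_\M(\cdot,x_0)^2$ twice differentiable on the ball and, for $\kmax>0$, keeps $r$ small enough for $\deltar{r}$ to be positive; everything afterward is a routine eigenvalue and monotonicity computation. Since the statement is attributed to \citep{martinez2022accelerated}, a fully rigorous shortcut is simply to cite that reference and limit the present argument to recording how the Hessian bounds translate into \cref{def:strongly_cvx_geo_fct,def:geodesic_smoothness_fct}.
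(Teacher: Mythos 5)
Your argument is correct, and it is the standard proof of this fact; note, however, that the paper itself offers no proof at all for \cref{prop:dist_func_strong_cvx_smooth} — it is stated purely as a citation to \citep{martinez2022accelerated}, which is also where the comparison-theorem argument you sketch is carried out. Your route (write $\mathrm{Hess}\, f = \rho\,\mathrm{Hess}\,\rho + d\rho\otimes d\rho$ for $\rho = d_\M(\cdot,x_0)$, apply the two-sided Hessian comparison theorem to $\rho$ on directions orthogonal to $\nabla\rho$, then take extrema of $\rho\,\mathrm{ct}_{\kappa}(\rho)$ over the ball using the monotonicity of $t\cot t$ and $t\coth t$) does reproduce exactly the constants $\deltar{r}$ and $\zetar{r}$ of \eqref{eq:defi_zeta_and_delta}, and your translation of the uniform Hessian bounds into \cref{def:strongly_cvx_geo_fct,def:geodesic_smoothness_fct} via a Taylor expansion along geodesics is sound (the absolute value in \cref{def:geodesic_smoothness_fct} is handled by convexity, as the paper itself remarks). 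The only point worth making explicit is that the integrated inequalities require the minimizing geodesic between two points of $B_{x_0}(r)$ to remain in the region where the Hessian bounds hold, i.e.\ the ball should be geodesically convex; this is guaranteed under the same restriction you already flag ($r$ below the convexity radius, in particular $r\sqrt{\kmax}<\pi/2$ when $\kmax>0$), and it is implicitly assumed in the paper's informal statement. With that caveat recorded, your proposal is complete, and your suggested shortcut of simply citing \citep{martinez2022accelerated} is precisely what the paper does.
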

In the case of Cartan--Hadamard manifolds, $\deltar{r} = 1$ and $\zetar{r} \in [ r\sqrt{\abs{\kmin}}, r\sqrt{\abs{\kmin}} +1]$.

\subsection{Smoothness and Strong Convexity of Euclidean Pulled-Back Function} Under \cref{assump:bounded_curvature}, \cite[Proposition 6.1]{criscitiello2022negative} showed that the Euclidean pulled-back function $x \mapsto f(\expon{x_0}(x))$ of a smooth, strongly convex function $f$ defined in a ball of restricted radius in $\mathcal{M}$ is also smooth, strongly convex (in the Euclidean sense) in a ball of the same radius.

\begin{proposition} \label{prop:prop_boumal}
(Informal, see \cite[Proposition 6.1]{criscitiello2022negative} for details). Let $\mathcal{M}$ be a uniquely geodesic Riemannian manifold that satisfies \cref{assump:bounded_curvature} and that contains $\mathcal{B}(x_{\text{ref}},r)$ defined as
\[
    \mathcal{B}(x_{\text{ref}},r) : \{x\in\mathcal{M} : d_\mathcal{M}(x,x_{\text{ref}})\leq r\},\qquad \text{and let} \qquad \mathcal{B}_{x_{\text{ref}}}(0,r) : \{v\in\mathcal{T}_{x_{\text{ref}}}\mathcal{M} :\|v\|\leq r\}.
\]
As shown above, we also defined the pulled-back ball to $\mathcal{T}_{x_{\text{ref}}}\mathcal{M}$.
Let us assume the function $f:\mathcal{M} \rightarrow \mathbb{R}$ is $L$-smooth, $\mu$-strongly geodesically convex in the ball $\mathcal{B}(x_{\text{ref}},r)$, and has its minimizer $x^\star \in \mathcal{B}(x_{\text{ref}},r)$. 
Then, if 
\[
    r\leq \frac{\mu}{L} \min\left\{\frac{1}{4K}, \frac{K}{4F}\right\},
\] 
the pulled-back Euclidean function $x \mapsto f(\expon{x_0}(x))$ is $\frac{3}{2}L$-smooth, and $\frac{1}{2}\mu$-strongly convex over the ball $\mathcal{B}_{x_{\text{ref}}}(0,r)$.
\end{proposition}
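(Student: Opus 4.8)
Write $x_0\defi x_{\text{ref}}$ and $\hat f(v)\defi f(\expon{x_0}(v))$ for $v$ in the Euclidean ball $\mathcal{B}_{x_0}(0,r)\subseteq T_{x_0}\M$. Since this ball is flat and convex, it suffices to prove the Hessian sandwich
\[
    \tfrac12\mu\,\norm{\xi}_{x_0}^2 \;\le\; D^2\hat f(v)[\xi,\xi] \;\le\; \tfrac32 L\,\norm{\xi}_{x_0}^2 \qquad\text{for all } v\in\mathcal{B}_{x_0}(0,r),\ \xi\in T_{x_0}\M,
\]
as strong convexity and smoothness of $\hat f$ in the Euclidean sense then follow by integrating along segments. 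Reading ``$\mu$-strongly g-convex, $L$-g-smooth on $\mathcal{B}(x_0,r)$'' in the usual (Hessian) sense, i.e.\ $\mu\norm{u}_x^2\le\operatorname{Hess} f(x)[u,u]\le L\norm{u}_x^2$ for every $x\in\mathcal{B}(x_0,r)$, the plan is to expand $D^2\hat f(v)$ through the second-order chain rule and bound the two resulting contributions with, respectively, the g-convexity/g-smoothness of $f$ and comparison-geometry estimates for the exponential map.

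Fix a unit $\xi\in T_{x_0}\M$, set $c_v(s)\defi\expon{x_0}(v+s\xi)$ so that $\hat f(v+s\xi)=f(c_v(s))$, and differentiate twice in $s$ at $s=0$:
\[
    D^2\hat f(v)[\xi,\xi] \;=\; \operatorname{Hess} f\!\big(\expon{x_0}(v)\big)\!\big[T_v\xi,\,T_v\xi\big] \;+\; \big\langle \nabla f(\expon{x_0}(v))\,;\ \Phi_v(\xi,\xi)\big\rangle_{\expon{x_0}(v)},
\]
where $T_v\defi d(\expon{x_0})_v$ is the differential of the exponential map at $v$, and $\Phi_v(\xi,\xi)\defi\tfrac{D}{ds}\big|_{s=0}\dot c_v(s)$ is the intrinsic (covariant) acceleration of $c_v$ at $s=0$ — i.e.\ the second covariant derivative of $\expon{x_0}$ at $v$, a symmetric bilinear object in $\xi$ that vanishes for $v=0$ since $c_0$ is a geodesic. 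For the first term, Rauch comparison on $\mathcal{B}(x_0,r)$ with sectional curvature in $[\kmin,\kmax]$ (as in \cref{prop:dist_func_strong_cvx_smooth}, at distance $\le r$) gives $\big|\,\norm{T_v\xi}_{\expon{x_0}(v)}^2-\norm{\xi}_{x_0}^2\,\big|\le\varepsilon_1\norm{\xi}_{x_0}^2$ with $\varepsilon_1=\bigo{Kr^2}$, hence $\mu(1-\varepsilon_1)\le\operatorname{Hess} f(\expon{x_0}(v))[T_v\xi,T_v\xi]\le L(1+\varepsilon_1)$.

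For the second term I would first bound the gradient: since $\nabla f(x^\star)=0$, $f$ is $L$-g-smooth, and $d_\M(x^\star,\expon{x_0}(v))\le 2r$ by the triangle inequality (both $x^\star$ and $\expon{x_0}(v)$ lie within $r$ of $x_0$), the standard smoothness estimate gives $\norm{\nabla f(\expon{x_0}(v))}_{\expon{x_0}(v)}\le 2Lr$. Next one needs $\norm{\Phi_v(\xi,\xi)}\le\varepsilon_2$ for unit $\xi$, with $\varepsilon_2=\bigo{Kr^2+Fr^3}$ vanishing as $r\to0$; this is the second-order estimate for the exponential map, obtained by writing the Jacobi equation satisfied by the family $\{T_{tv}\}_{t\in[0,1]}$ and differentiating it once more, so that the deviation of $c_v$ from a geodesic is controlled by $\curvtensor$ and by $\nabla\curvtensor$ — which is exactly where \cref{assump:bounded_curvature} is used — cf.\ \citep[Prop.\ 6.1 and its supporting lemmas]{criscitiello2022negative}. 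Thus $\big|\langle\nabla f(\expon{x_0}(v))\,;\Phi_v(\xi,\xi)\rangle\big|\le 2Lr\,\varepsilon_2$, and combining, $\mu(1-\varepsilon_1)-2Lr\varepsilon_2\le D^2\hat f(v)[\xi,\xi]\le L(1+\varepsilon_1)+2Lr\varepsilon_2$ on the whole ball. It then remains to check, from the explicit forms of $\varepsilon_1,\varepsilon_2$, that the hypothesis $r\le\tfrac{\mu}{L}\min\{\tfrac1{4K},\tfrac K{4F}\}$ forces $\varepsilon_1\le\tfrac14$ and $2Lr\varepsilon_2\le\tfrac14\mu\ (\le\tfrac14 L)$ — the factor $\tfrac1{4K}$ absorbing the $K$-part of the residual and $\tfrac K{4F}$ the $\nabla\curvtensor$-part — which upgrades the inequalities to $\tfrac12\mu\norm{\xi}^2\le D^2\hat f(v)[\xi,\xi]\le\tfrac32 L\norm{\xi}^2$, as claimed.

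I expect the crux to be the estimate $\norm{\Phi_v(\xi,\xi)}=\bigo{(K+Fr)r^2}$, i.e.\ quantifying how far $s\mapsto\expon{x_0}(v+s\xi)$ is from a geodesic: controlling $T_v$ is classical comparison geometry, but the second covariant derivative of the exponential map is precisely where $\norm{\nabla\curvtensor}\le F$ enters and calls for a careful Jacobi-field computation. Since \cref{prop:prop_boumal} is quoted from \citep{criscitiello2022negative}, I would invoke their technical lemmas at that step and assemble the pieces as above.
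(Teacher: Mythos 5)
There is no paper-internal proof to compare against here: \cref{prop:prop_boumal} is stated as an \emph{informal} quotation of \citep[Proposition~6.1]{criscitiello2022negative}, and the paper's only original content around it is the remark that the hypothesis $x^\star\in\mathcal{B}(x_{\text{ref}},r)$ is used solely to get the Lipschitz bound $\norm{\nabla f}\le 2Lr$ on the ball. Your sketch is a faithful reconstruction of the cited argument's architecture and is consistent with that remark: you bound the Euclidean Hessian of the pullback via the chain rule, splitting it into $\operatorname{Hess} f[T_v\xi,T_v\xi]$ (controlled by Rauch/Jacobi comparison on $T_v=d(\expon{x_0})_v$, giving a $1\pm\bigo{Kr^2}$ distortion) plus a gradient term $\innp{\nabla f, \Phi_v(\xi,\xi)}$ with $\norm{\nabla f}\le 2Lr$, where the second covariant derivative of the exponential map is exactly the place where the bound $\norm{\nabla\curvtensor}\le F$ from \cref{assump:bounded_curvature} is needed; deferring that estimate to the technical lemmas of \citep{criscitiello2022negative} is reasonable given the proposition itself is a citation.

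One caveat on the quantitative step you left to ``check at the end'': your claimed order $\norm{\Phi_v(\xi,\xi)}=\bigo{Kr^2+Fr^3}$ for unit $\xi$ appears too strong by one power of $r$. A direct computation on the unit sphere (take $c(s)=\expon{x_0}(v+s\xi)$ with $\norm{v}=r$, $\xi\perp v$) gives covariant acceleration of magnitude $\tfrac{2}{3}Kr+\bigo{r^2}$, so the correct leading order is $\bigo{Kr+Fr^2}$. With that correction, the product $2Lr\cdot\norm{\Phi_v}$ is $\bigo{L(Kr^2+Fr^3)}$, and verifying that the stated radius condition $r\le\tfrac{\mu}{L}\min\{\tfrac{1}{4K},\tfrac{K}{4F}\}$ absorbs it is not a purely mechanical check (the condition as written is dimensionally informal, which is why the paper flags the statement as informal). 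So your outline is correct as a roadmap, but the final constant bookkeeping must be imported from the precise lemmas of \citep{criscitiello2022negative} rather than from the $\varepsilon_2$ you posited.
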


One can relax the assumption of the minimizer being in the ball as this is only used to show that the Lipschitz constant of the function in the ball is at most $2Lr$. For instance, if the distance between the minimizer and $x_{\text{ref}}$ is $R$, the Lipschitz constant can be bounded by $O(LR)$, and we could use this bound to conclude a similar statement.

\subsection{Main result} Using \cref{prop:dist_func_strong_cvx_smooth,prop:prop_boumal}, we can prove that balls and other sets obtained from sublevel sets of smooth and strongly g-convex functions in small regions are Riemannian strongly convex. 

\begin{theorem}\label{thm:restricted_balls_are_riemannian_strongly_convex}
    Let $\mathcal{M}$ be a uniquely geodesic Riemannian manifold that satisfies \cref{assump:bounded_curvature}, and let the function $f: x\mapsto \frac{1}{2} d_\mathcal{M}(x,x_0)^2$, $x_0\in\mathcal{M}$. Then, the sublevel sets
    \[
        \mathcal{C}_r = \{x\in\mathcal{M} : f(x)\leq \frac{1}{2}r^2\}
    \]
    are Riemannian strongly convex if $r \leq \frac{1}{2} \frac{\deltar{r}}{\zetar{r}} \min\{\frac{1}{4K}, \frac{K}{4F}\}$, where $\zetar{r}$ and $\deltar{r}$ are the smoothness and strong convexity parameters of the function $\frac{1}{2}d_\mathcal{M}(\cdot,x_0)^2$ in $\mathcal{C}_r$ (see \cref{eq:defi_zeta_and_delta}).
\end{theorem}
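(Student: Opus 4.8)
The plan is to verify \cref{def:riemannian_strong_convexity} pointwise. Fix an arbitrary $x\in\mathcal{C}_r$; we must show that $\operatorname{Exp}_x^{-1}(\mathcal{C}_r)\subset T_x\mathcal{M}$ is $\alpha$-strongly convex in the Euclidean sense, with $\alpha>0$ not depending on $x$. Note first that, for the radius allowed by the hypothesis, $\mathcal{C}_r=B_{x_0}(r)$ lies well within the convexity radius of $\mathcal{M}$, so it is uniquely geodesically convex, \cref{def:riemannian_strong_convexity} is meaningful, and the logarithmic maps appearing in it are the well-defined restricted ones of \cref{remark:inverse_exponential_map}. Writing $\hat f_x:=f\circ\operatorname{Exp}_x\colon T_x\mathcal{M}\to\mathbb{R}$ for the Euclidean pulled-back function, the crucial observation is that $\operatorname{Exp}_x^{-1}(\mathcal{C}_r)=\{v\in T_x\mathcal{M}:\hat f_x(v)\le\tfrac12 r^2\}$, i.e.\ the pulled-back set is itself a sublevel set of $\hat f_x$; and since $f=\tfrac12 d_\mathcal{M}(\cdot,x_0)^2$ attains its global minimum $0$ at $x_0\in\mathcal{C}_r$, the function $\hat f_x$ attains its minimum $0$ at $\operatorname{Exp}_x^{-1}(x_0)\in\operatorname{Exp}_x^{-1}(\mathcal{C}_r)$.

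Next I would transfer the \emph{geodesic} smoothness and strong convexity of $f$ (given by \cref{prop:dist_func_strong_cvx_smooth}) into \emph{Euclidean} smoothness and strong convexity of $\hat f_x$ on a tangent ball that contains $\operatorname{Exp}_x^{-1}(\mathcal{C}_r)$. The triangle inequality gives $d_\mathcal{M}(z,x)\le d_\mathcal{M}(z,x_0)+d_\mathcal{M}(x_0,x)\le 2r$ for every $z\in\mathcal{C}_r$, so $\mathcal{C}_r\subseteq B_x(2r)$ and $\operatorname{Exp}_x^{-1}(\mathcal{C}_r)\subseteq\mathcal{B}_x(0,2r)$; moreover the minimizer $x_0$ of $f$ lies in $B_x(2r)$. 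On the ball $B_x(2r)$ (which sits inside $B_{x_0}(3r)$), \cref{prop:dist_func_strong_cvx_smooth} makes $f$ geodesically $\zeta$-smooth and $\delta$-strongly convex with the curvature-dependent constants of \eqref{eq:defi_zeta_and_delta} evaluated at a radius that is a bounded multiple of $r$; the hypothesis $r\le\tfrac12\tfrac{\delta_r}{\zeta_r}\min\{\tfrac1{4K},\tfrac{K}{4F}\}$ is what makes the size condition of \cref{prop:prop_boumal} hold on $B_x(2r)$ (up to this bounded change of radius). Applying \cref{prop:prop_boumal} with $x$ as reference point — the centre of $B_x(2r)$ — together with \cref{assump:bounded_curvature}, we conclude that $\hat f_x$ is $\tfrac32\zeta$-smooth and $\tfrac12\delta$-strongly convex, in the Euclidean sense, on $\mathcal{B}_x(0,2r)$.

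Finally I would invoke the Euclidean fact recalled in \cref{sec:level_set_geodesically} (the tangent-space instance of \citep[Theorem 12]{journee2010generalized}): the sublevel set $\{v:g(v)-g^\star\le s\}$ of a Euclidean $\hat\mu$-strongly convex and $\hat L$-smooth function $g$ is $\tfrac{\hat\mu}{2\sqrt{2\hat Ls}}$-strongly convex. Applying it with $g=\hat f_x$ on $\mathcal{B}_x(0,2r)$, $\hat\mu=\tfrac12\delta$, $\hat L=\tfrac32\zeta$, $g^\star=0$ and $s=\tfrac12 r^2$ shows that $\operatorname{Exp}_x^{-1}(\mathcal{C}_r)$ is $\alpha$-strongly convex with $\alpha=\tfrac{\delta}{2r\sqrt{6\zeta}}$, independent of $x$. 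Since $x\in\mathcal{C}_r$ was arbitrary, \cref{def:riemannian_strong_convexity} holds and $\mathcal{C}_r$ is Riemannian $\alpha$-strongly convex.

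I expect the main obstacle to be the middle step: \cref{prop:dist_func_strong_cvx_smooth} and \cref{prop:prop_boumal} are phrased for geodesic balls centred at a fixed point, but $\operatorname{Exp}_x^{-1}(\mathcal{C}_r)$ must be controlled after pulling back at an \emph{arbitrary} $x\in\mathcal{C}_r$, not at $x_0$; this forces one to work inside the doubled ball $B_x(2r)$ and to track how the constants $\delta,\zeta$ of \eqref{eq:defi_zeta_and_delta} degrade with the radius — using that $\delta_\rho$ is nonincreasing and $\zeta_\rho$ nondecreasing in $\rho$ — which is precisely where the factor $\tfrac12$ in the statement originates. A minor additional point is that the Euclidean sublevel-set lemma is cleanest when the sublevel set lies in the interior of the region where $\hat f_x$ is strongly convex; this is automatic for $x$ interior to $\mathcal{C}_r$, and for $x\in\partial\mathcal{C}_r$ the degenerate configuration in which $\operatorname{Exp}_x^{-1}(\mathcal{C}_r)$ meets $\partial\mathcal{B}_x(0,2r)$ is removed by a routine limiting (or slight ball-enlargement) argument.
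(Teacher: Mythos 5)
Your proof follows essentially the same route as the paper's: pull $f$ back at each $x\in\mathcal{C}_r$, combine \cref{prop:dist_func_strong_cvx_smooth} and \cref{prop:prop_boumal} with the triangle-inequality bound $d_\mathcal{M}(x,z)\le 2r$ for $z\in\mathcal{C}_r$ (the source of the factor $\tfrac12$ in the radius condition), and conclude that $\operatorname{Exp}_x^{-1}(\mathcal{C}_r)$, being a sublevel set of a Euclidean smooth, strongly convex function, is a strongly convex set. Your write-up is in fact more explicit than the paper's brief argument, which leaves the radius bookkeeping and the final Euclidean sublevel-set step (\citep[Theorem 12]{journee2010generalized}) implicit.
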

\begin{proof}
    In this setting, pulling the function back to the tangent space of any point in $\mathcal{C}_r$ results in a Euclidean function that is strongly convex and smooth with condition number $O(\zetar{r}/\deltar{r})$ in $\mathcal{C}_r$. Furthermore, for all point $x \in \mathcal{C}_r$, the function $\hat{f}_x : \exponinv{x}(\mathcal{C}_r) \to \mathbb{R}$ defined as $y \mapsto f(\expon{x}(y))$ for all $y\in \exponinv{x}(\mathcal{C}_r)$ is strongly convex with the parameter $\deltar{r}/2$, and in particular, $\exponinv{x}(\mathcal{C}_r)$ is a strongly convex set. It should be noted that we considered that the distance from $x$ to any point in $\mathcal{C}_r$ is at most $2r$.
\end{proof}

This implies that Riemannian balls in these generic manifolds are geodesic strongly convex sets. We note that the greatest $r$ that satisfies the condition in \cref{thm:restricted_balls_are_riemannian_strongly_convex} is roughly a constant, considering constant curvature bounds. Being able to optimize in these sets is important as \citet{martinez2020global} proved that one can reduce global geodesically convex optimization to the optimization over these sets.

\section{Example of a Simple Linear Minimization Oracle} When the curvature is constant (spheres or hyperbolic spaces) and the domain is a ball, the linear minimization oracle can be simplified into a one-dimensional problem.

\begin{theorem}[\textbf{Linear minimization oracle in a ball in a constant curvature manifold}] \label{thm:solution_gamma}
    Let $\ball \defi \B(x_0, r)\subset \M$ be a closed Riemannian ball in a manifold $\M$ of a constant sectional curvature. If the curvature $K$ is positive, let us assume $r < \frac{\pi}{2\sqrt{K}}$, so that the ball is uniquely geodesically convex. Given a point $x$ and a direction $v \in T_x\M$, we define
    \[ 
        \Gamma \defi \expon{x}\left(\operatorname{span}\{\exponinv{x}(x_0), v\}\right) \cap \{x \ | d_\mathcal{M}(x, x_0) = r\}.
    \]
    The solution of $\argmin_{z\in\ball} \innp{v, \exponinv{x}(z)}$ can then be found in $\Gamma$.
\end{theorem}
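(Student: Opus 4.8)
The plan is to transport the whole problem into the tangent space $T_x\M$ and exploit the rotational symmetry that constant curvature forces on the feasible region. Put $y\defi\exponinv{x}(x_0)\in T_x\M$, so that $\norm{y}_x=d_\M(x,x_0)$ is a fixed constant, and take the optimization variable to be $w\defi\exponinv{x}(z)$, which ranges over $F\defi\exponinv{x}(\ball)\subseteq T_x\M$. Assuming $\exponinv{x}$ is well defined on $\ball$ (in particular whenever $x\in\ball$) and recalling that $\ball$ is uniquely geodesically convex — and that $r<\pi/(2\sqrt K)$ when $K>0$, so $\expon{x}$ is a diffeomorphism on the relevant region — the map $\expon{x}\colon F\to\ball$ is a homeomorphism, and $\argmin_{z\in\ball}\innp{v,\exponinv{x}(z)}$ becomes the Euclidean problem $\argmin_{w\in F}\innp{v,w}_x$.

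Next I would show that $F$ is a solid of revolution about the axis $\mathbb{R}y$. Since $\M$ has constant curvature, the geometry of a geodesic hinge at $x$ is determined by the two side lengths and the included angle (the spherical/hyperbolic/Euclidean law of cosines), so $d_\M(\expon{x}(w),x_0)=d_\M(\expon{x}(w),\expon{x}(y))$ is a function of $\norm{w}_x$, $\norm{y}_x$ and $\innp{w,y}_x$ only; as $\norm{y}_x$ is fixed, $F=\{w\in T_x\M:\Phi(\norm{w}_x,\innp{w,y}_x)\le r\}$ for a continuous $\Phi$. Hence both $F$ and its topological boundary $\partial F=\exponinv{x}(S)$, where $S\defi\{z\in\M:d_\M(z,x_0)=r\}$, are invariant under every linear isometry of $T_x\M$ fixing the line $\mathbb{R}y$, since such maps preserve $\norm{w}_x$ and $\innp{w,y}_x$.

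Then I would first reduce to the boundary and then rotate into the plane. If $v=0$ the statement is vacuous once we observe $\Gamma\neq\emptyset$ (it is the radius-$r$ geodesic circle around $x_0$ inside the totally geodesic surface $N\defi\expon{x}(\operatorname{span}\{\exponinv{x}(x_0),v\})\ni x_0$), so assume $v\neq0$; then any minimizer $w^\ast$ in the topological interior of $F$ would satisfy $w^\ast-\varepsilon v\in F$ with strictly smaller objective for small $\varepsilon>0$, so every minimizer lies in $\partial F$. Fix such a $w^\ast$ and decompose $w^\ast=a+b$, $v=c+e$ with $a,c\in\mathbb{R}y$ and $b,e\perp y$. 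If $v\in\mathbb{R}y$, then $\innp{v,\cdot}_x$ on the surface of revolution $\partial F$ depends only on the axial coordinate and attains its extremes at the two poles, which lie on $\mathbb{R}y\subseteq\operatorname{span}\{y,v\}$. Otherwise $e\neq0$, and applying the rotation fixing $\mathbb{R}y$ that sends $b$ to $-\norm{b}\,e/\norm{e}$ produces $\tilde w\in\partial F\cap\operatorname{span}\{y,v\}$ with $\innp{v,\tilde w}_x=\innp{c,a}_x-\norm{b}\,\norm{e}\le\innp{c,a}_x+\innp{b,e}_x=\innp{v,w^\ast}_x$. Either way $z^{\ast\ast}\defi\expon{x}(\tilde w)\in N\cap S=\Gamma$ is a minimizer, which is the claim.

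The main obstacle is the second step: justifying rigorously that $F=\exponinv{x}(\ball)$ is genuinely a solid of revolution about $\mathbb{R}y$ and that $\partial F$ corresponds to the distance sphere $S$. This requires invoking the law of cosines in the correct curvature regime and checking that the hypothesis $r<\pi/(2\sqrt K)$ (for $K>0$) keeps everything inside the domain on which $\expon{x}$ is injective, so that $F$ is well defined, homeomorphic to a ball, and satisfies $\partial F=\exponinv{x}(S)$. Once the rotational symmetry of $F$ is established, the reduction to the two-plane $\operatorname{span}\{\exponinv{x}(x_0),v\}$ is a routine finite-dimensional optimization argument.
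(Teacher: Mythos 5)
Your argument is correct and reaches the same reduction as the paper, but by a genuinely different mechanism. The paper works with a single reflection: it reflects a putative solution $z$ across the totally geodesic surface $\expon{x}\left(\operatorname{span}\{\exponinv{x}(x_0), v\}\right)$, notes the reflected point $z'$ is also optimal, and then averages $\exponinv{x}(z)$ and $\exponinv{x}(z')$ in $T_x\M$ to land in the plane; it then argues separately that any solution lies on the distance sphere. You instead pull the whole problem back to $F=\exponinv{x}(\ball)$, prove via the constant-curvature law of cosines that $F$ is a body of revolution about $\mathbb{R}\exponinv{x}(x_0)$, push a hypothetical interior minimizer to $\partial F$ along $-v$, and then rotate about the axis and compare objectives by Cauchy--Schwarz. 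The two routes buy different things: your rotation sends the minimizer to a point that is \emph{exactly} in $\partial F$ (rotations preserve $\norm{w}_x$ and $\innp{w,\exponinv{x}(x_0)}_x$), so no feasibility question arises, whereas the paper's midpoint step implicitly needs that the tangent-space projection onto the plane still maps into $\ball$ — a monotonicity-in-$\norm{w}_x$ fact it does not spell out; conversely, the paper's reflection argument is shorter and does not require describing $F$ explicitly. Two caveats on your side, neither fatal: (i) your degenerate case $v\in\mathbb{R}\exponinv{x}(x_0)$ rests on the assertion that the axial coordinate over $\partial F$ is extremized at the poles, which for this particular body of revolution needs a small law-of-cosines computation (the paper treats this case equally summarily); (ii) the rotational invariance of $d_\M(\cdot,x_0)\circ\expon{x}$, like the paper's reflection symmetry, really uses that $\M$ behaves as a simply connected space form on the relevant region — your flagged verification via the law of cosines and the bound $r<\pi/(2\sqrt{K})$ is exactly the right thing to check, and is standard there.
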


\begin{proof}
    A solution is in the first set of the definition of $\Gamma$ owing to the symmetries of the manifold.  Indeed, let us assume there is a solution $z$, such that
    \[     
        z\notin\expon{x}\left(\operatorname{span}\{\exponinv{x}(x_0), v\}\right).
    \]
    Now, let us consider the points $\exponinv{x}(z)\in T_x \M$, and $\exponinv{x}(z')\in T_x \M$, such that $z'$ is the point resulting from the application of $\expon{x}(\cdot)$ to the symmetric of $\exponinv{x}(z)$ with respect to the plane  $\operatorname{span}\{\exponinv{x}(x_0), v\}$. 
    
    Then, $z'$ is also a solution; therefore, $\expon{x}(\frac{1}{2}\exponinv{x}(z) + \frac{1}{2}\exponinv{x}(z'))$ would also be a solution, and it would be in $\expon{x}\left(\operatorname{span}\{\exponinv{x}(x_0), v\}\right)$. 
    
    Moreover, any solution $z$ satisfies $d_\mathcal{M}(x, z) = r$ because, otherwise, there exists a neighborhood around $z$ contained in $\B$, and we could further decrease the function value of $\innp{v, \exponinv{x}(\cdot)}$ for some point in this neighborhood. Finally, we can parametrize $\Gamma$ as $\theta \mapsto $ the point of intersection of $\B$ with the geodesic segment starting at $x$ from the direction $\cos(\theta)\exponinv{x}(x_0)+ \sin(\theta) v$, where $\theta \in [0, 2\pi)$. Except for the degenerate case wherein $\exponinv{x}(x_0)$ is parallel to $v$, where $\Gamma$ is a single point and the solution we are looking for.
\end{proof}

A direct consequence of \cref{thm:solution_gamma} is that the problem of approximating a solution of the linear max oracle can be solved by solving an alternative one-dimensional problem.

\begin{proposition}\label{prop:one_dim_problem}
    Let $u_1,\,u_2$ be an orthonormal basis for the linear subspace $\operatorname{span}\{\exponinv{x}(x_0), v\}$.
    
    The solution of $\argmin_{z\in\ball} \innp{v, \exponinv{x}(z)}$ can then be obtained by solving the following one-dimensional problem,
    \[
        \min_{\phi\in[-\pi,\pi]}\alpha(\phi)\langle v; p(\phi)\rangle,
    \]
    where $p(\phi)$ and $\alpha(\phi)$ are defined as
    \begin{align}
        p(\phi) & = \cos(\phi) u_1 + \sin(\phi)u_2,\qquad \phi\in[-\pi,\pi],\\
        \alpha(\phi) & = \argmin_{\alpha>0}: \operatorname{Dist}\left(\operatorname{Exp}_x(\alpha p(\phi)),x_0\right)=r. \label{eq:alpha_phi}
    \end{align}
\end{proposition}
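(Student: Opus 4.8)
The plan is to translate the geometric content of \cref{thm:solution_gamma} into an explicit one-dimensional optimization. By \cref{thm:solution_gamma}, every minimizer $z$ of $\innp{v, \exponinv{x}(z)}$ over $\ball$ lies in the set $\Gamma$, which consists of points of the form $\expon{x}(\alpha p)$, where $p$ ranges over unit vectors in the two-dimensional subspace $\operatorname{span}\{\exponinv{x}(x_0), v\}$ and $\alpha > 0$ is chosen so that $d_\M(\expon{x}(\alpha p), x_0) = r$, i.e.\ so that the endpoint lands on the sphere $\{x : d_\M(x, x_0) = r\}$. First I would parametrize the unit circle of directions in that subspace: using the orthonormal basis $u_1, u_2$, every unit vector is $p(\phi) = \cos(\phi) u_1 + \sin(\phi) u_2$ for $\phi \in [-\pi, \pi]$. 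This is exactly the parametrization of $\Gamma$ by angle that appears at the end of the proof of \cref{thm:solution_gamma}, just written in a fixed basis rather than in terms of $\exponinv{x}(x_0)$ and $v$ directly.

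Next I would pin down the radial coordinate. For a fixed direction $p(\phi)$, the map $\alpha \mapsto \operatorname{Dist}(\expon{x}(\alpha p(\phi)), x_0)$ is (on the relevant range, using that the ball is uniquely geodesically convex when $r < \pi/(2\sqrt{K})$ in the positive-curvature case) continuous and eventually exceeds $r$, so there is a well-defined value $\alpha(\phi)$ with $\operatorname{Dist}(\expon{x}(\alpha(\phi) p(\phi)), x_0) = r$; this is the definition \eqref{eq:alpha_phi}. The corresponding point of $\Gamma$ is then $\expon{x}(\alpha(\phi) p(\phi))$, whose image under $\exponinv{x}$ is simply $\alpha(\phi) p(\phi)$ since $\expon{x}$ restricted to $\ball_x$ is a bijection (cf.\ \cref{remark:inverse_exponential_map}) and radial geodesics are minimizing within the ball. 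Substituting this into the objective gives $\innp{v, \exponinv{x}(z)} = \alpha(\phi)\langle v; p(\phi)\rangle$, so minimizing over $z \in \ball$ is equivalent to minimizing $\alpha(\phi)\langle v; p(\phi)\rangle$ over $\phi \in [-\pi, \pi]$, which is the claimed one-dimensional problem.

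Finally I would address the degenerate case flagged in \cref{thm:solution_gamma}, where $\exponinv{x}(x_0)$ and $v$ are parallel, so that $\operatorname{span}\{\exponinv{x}(x_0), v\}$ is one-dimensional and $\Gamma$ degenerates to a single point (a pair of antipodal points on the sphere); there the one-dimensional problem is trivially solved since $p(\phi)$ only takes two antipodal values. I expect the main obstacle to be making the reduction to $\Gamma$ and the well-posedness of $\alpha(\phi)$ fully rigorous: one must confirm that $\alpha(\phi)$ is unique and that $\expon{x}(\alpha(\phi)p(\phi))$ genuinely realizes a point of $\Gamma$ whose logarithm at $x$ is $\alpha(\phi)p(\phi)$ — this uses that within the uniquely geodesically convex ball the radial geodesic from $x$ is distance-minimizing, together with monotonicity of $\alpha \mapsto \operatorname{Dist}(\expon{x}(\alpha p(\phi)), x_0)$ as the geodesic crosses the sphere. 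Once these facts are in place the remaining steps are just the substitution above, so the proposition follows as a direct corollary of \cref{thm:solution_gamma}.
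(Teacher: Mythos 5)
Your proposal is correct and follows essentially the same route as the paper, whose proof simply observes that the reduction is immediate from \cref{thm:solution_gamma} by writing the two-dimensional subspace $\operatorname{span}\{\exponinv{x}(x_0), v\}$ in polar coordinates, with $p(\phi)$ the unit direction and $\alpha(\phi)$ the radial length at which the geodesic meets the sphere $\{x: d_\mathcal{M}(x,x_0)=r\}$. Your version just spells out the substitution $\exponinv{x}(z)=\alpha(\phi)p(\phi)$ and the well-posedness of $\alpha(\phi)$ in more detail than the paper does.
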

\begin{proof}
    The proof is immediate. The subspace $\operatorname{span}\{\exponinv{x}(x_0), v\}$ is parametrized with polar coordinates: $p$ is a direction of the unitary norm, and $\alpha$ is the length. The non-linear equation \eqref{eq:alpha_phi} represents the smallest $\alpha$ such that $\alpha\cdot p$ intersects $\mathcal{B}(x_0,r)$.
\end{proof}

The solution $\phi$ can be computed using a one-dimensional solver, e.g., bisection or the Newton-Raphson method. It should be noted that finding $\alpha(\phi)$ is also a one-dimensional problem, and it can also be solved using similar techniques. 

In some cases, there are simple formulas for $\alpha(\phi)$, e.g., when the manifold is a sphere (\cref{prop:def_alpha}).

\begin{proposition} \label{prop:def_alpha}
    Let $\mathcal{M}=\mathbb{S}^{n-1}$ be the unit sphere. The formula of \cref{eq:alpha_phi} then reads as
    \[
        \alpha(\phi) = 2\text{tan}^{-1}\left( \frac{b(\phi)+\sqrt{a^2+b(\phi)^2-c^2}}{a+c} \right),\quad a = x_0^Tx,\quad b(\phi)=x_0^Tp(\phi),\quad c = 1-2\sin\left( \frac{r}{2} \right).
    \]
\end{proposition}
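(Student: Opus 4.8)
The plan is a direct computation resting on the explicit exponential map of the round sphere; no input from the Frank--Wolfe context is needed beyond the definition of $\alpha(\phi)$ in \eqref{eq:alpha_phi}. First, recall that for $\mathcal{M}=\mathbb{S}^{n-1}$ with the metric induced from $\mathbb{R}^n$ one has $T_x\mathbb{S}^{n-1}=\{w:\langle w;x\rangle=0\}$ and, for a \emph{unit} tangent vector $w$, $\operatorname{Exp}_x(\alpha w)=\cos(\alpha)\,x+\sin(\alpha)\,w$. Since $u_1,u_2$ is an orthonormal basis of $\operatorname{span}\{\operatorname{Exp}_x^{-1}(x_0),v\}\subseteq T_x\mathbb{S}^{n-1}$, the vector $p(\phi)=\cos(\phi)u_1+\sin(\phi)u_2$ is a unit tangent vector at $x$, so $\operatorname{Exp}_x(\alpha p(\phi))=\cos(\alpha)\,x+\sin(\alpha)\,p(\phi)$. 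This is the only geometric ingredient; the rest is trigonometry.

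Second, I would translate the constraint defining $\alpha(\phi)$ in \eqref{eq:alpha_phi}. On the sphere the geodesic distance to $x_0$ is a strictly decreasing function of $\langle\cdot\,;x_0\rangle$, so $\operatorname{Dist}(\operatorname{Exp}_x(\alpha p(\phi)),x_0)=r$ is equivalent to $\langle\operatorname{Exp}_x(\alpha p(\phi));x_0\rangle=c$, where $c$ is precisely the inner product realized by a point at distance $r$ from $x_0$ (the constant displayed in the statement, obtained from a half-angle identity). Expanding the left-hand side with $a=\langle x_0;x\rangle$ and $b(\phi)=\langle x_0;p(\phi)\rangle$ reduces the problem to the scalar equation $a\cos(\alpha)+b(\phi)\sin(\alpha)=c$.

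Third, I would solve this equation by the tangent half-angle substitution $t=\tan(\alpha/2)$, using $\cos\alpha=(1-t^2)/(1+t^2)$ and $\sin\alpha=2t/(1+t^2)$. Clearing denominators turns it into $(a+c)\,t^2-2b(\phi)\,t+(c-a)=0$, whose roots are $t=\big(b(\phi)\pm\sqrt{a^2+b(\phi)^2-c^2}\big)/(a+c)$; inverting $\alpha=2\arctan(t)$ then yields the asserted formula, provided the correct root is taken.

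The only genuine subtlety --- and the step I would treat most carefully --- is well-posedness and root selection. Nonnegativity of the discriminant $a^2+b(\phi)^2-c^2$ is exactly the condition that the great circle $\alpha\mapsto\operatorname{Exp}_x(\alpha p(\phi))$ actually meets $\{d_\mathcal{M}(\cdot;x_0)=r\}$, which is guaranteed for the directions $p(\phi)$ that occur in \cref{prop:one_dim_problem} (those in $\operatorname{span}\{\operatorname{Exp}_x^{-1}(x_0),v\}$, along which $\alpha(\phi)$ is defined in the first place); I would record this explicitly. Because $\alpha(\phi)$ is the $\operatorname{argmin}$ over $\alpha>0$, one must also check that the branch written in the statement is the one returning the first intersection of the forward geodesic ray from $x$ with $\partial\ball(x_0,r)$; using the monotonicity of $t\mapsto2\arctan(t)$ together with the sign of $a+c$ under $r<\pi/2$ (the uniquely-geodesic regime for the sphere), a short case analysis pins down the displayed root. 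Finally, the degenerate case in which $v$ is parallel to $\operatorname{Exp}_x^{-1}(x_0)$, where the span is one-dimensional, needs no separate formula and is dispatched exactly as in \cref{thm:solution_gamma}.
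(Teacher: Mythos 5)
Your proposal is correct and follows essentially the same route as the paper: use the explicit exponential map of the unit sphere to reduce the boundary condition $\operatorname{Dist}(\operatorname{Exp}_x(\alpha p(\phi)),x_0)=r$ to the scalar equation $a\cos(\alpha)+b(\phi)\sin(\alpha)=c$ and solve it via the tangent half-angle substitution. Your extra care actually fills in what the paper leaves implicit (root/branch selection for the forward intersection, nonnegativity of the discriminant) and implicitly corrects a typo in the statement, since the derivation forces $c=\cos r=1-2\sin^2\left(\frac{r}{2}\right)$ rather than the displayed $1-2\sin\left(\frac{r}{2}\right)$.
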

\begin{proof}
    The distance function and exponential map in the sphere read as
    \[
        \operatorname{Dist}(x,y) = 2\,\text{asin}\left(\frac{\|y-x\|}{2}\right),\qquad \operatorname{Exp}_x(tv) = x \cos(t\|v\|) + \frac{tv}{\|tv\|}\sin(t\|v\|),\qquad v\in\TxM,\;\; t\in\mathbb{R}.
    \]
    Hence, for all $x,\,y\in\mathcal{M}$,
    \begin{align*}
        & \operatorname{Dist}(x,y)=2\,\text{asin}\left(\frac{\|y-x\|}{2}\right) = r,\\
        \Rightarrow & \|y-x\| = 2\sin\left( \frac{r}{2} \right),\\
        \Rightarrow & \|y\|^2+\|x\|^2-2x^Ty = 4\sin^2\left( \frac{r}{2} \right),\\
        \Rightarrow & x^Ty = 1-2\sin^2\left( \frac{r}{2} \right).
    \end{align*}
   By replacing $x,\,y$ by $\operatorname{Exp}_x(\alpha p),\, x_0$ and using the expression of the exponential map, we obtain the equation
    \begin{align*}
        x_0^Tx\cos(\alpha)+x_0^Tp(\phi)\sin(\alpha) = 4\sin^2\left( \frac{r}{2} \right).
    \end{align*}
    The desired result follows after identifying the terms $a,\,b,\,c$ in
    \[
        a\cos(\alpha) + b\sin(\alpha) = c,
    \]
    the solution of which is 
    \[
        \alpha = 2\text{tan}^{-1}\left( \frac{b\pm\sqrt{a^2+b^2-c^2}}{a+c} \right)+2k\pi,\qquad k\in\mathbb{Z}.
    \]
\end{proof}

\section{Conclusion}\label{sec:conclusion}
We presented the first definitions for strong convexity of sets in Riemannian manifolds, studied their relationships, and have provided examples of these sets.
Our definitions seek to be well-suited for optimization and for establishing the strongly convex nature of the set.
The global linear convergence of the RFW algorithm serves as a tangible demonstration of the impact of developing a theory around the strong convexity structure of these sets.

However, most importantly, we expect the development of a strongly convex structure to be helpful when developing Riemannian algorithms in the contexts wherein the Euclidean algorithm counterpart was leveraging such a structure, e.g., in the generalized power method \citep{journee2010generalized}, online learning \citep{huang2017following,dekel2017online,bhaskara2020online,bhaskara2020onlineMany}, and even more broadly, in the case of the use of \textit{strongification} techniques, as in \citep{Molinaro20}.

\subsection*{Acknowledgments}

David Martínez-Rubio was partially funded by the DFG Cluster of Excellence MATH+ (EXC-2046/1, project id 390685689) funded by the Deutsche Forschungsgemeinschaft (DFG). AdA would like to acknowledge support from a Google focused award, as well as funding by the French government under management of Agence Nationale de la Recherche as part of the "Investissements d'avenir" program, reference ANR-19-P3IA-0001 (PRAIRIE 3IA Institute). We would like to thank Editage (www.editage.co.kr) for English language editing.

\clearpage

{\small
\ifdefined\isArxiv
\bibliographystyle{abbrvnat}
\fi
\bibsep 1ex
\bibliography{utils/bibliography}}

\clearpage

\appendix

\section{Technical Proposition and Lemmas}

\subsection{Double Geodesic Strong Convexity implies Riemannian Scaling Inequality}

\begin{restatable}[Double Geodesic Str. Cvx. implies Riemannian Scaling Inequality]{proposition}{doublegeodesicimplyscaling}\label{prop:implication_riemannian_scaling_inequality}
Let $\mathcal{C}\subset\mathcal{M}$ be a double geodesic $\alpha$-strongly convex set (\cref{def:double_geodesic_strong_convexity}) in a complete connected Riemannian manifold $\mathcal{M}$. Let us assume that the double exponential map operator (\cref{def:double_exponential_map}) is such that
\begin{equation}\label{eq:condition_exponential_operator_map}
h_x(u,v) = u + v,~~\forall(u,v)\in \TxM .
\end{equation}
The Riemannian scaling inequality in \cref{def:riemannian_scaling_inequality} is then satisfied.
\end{restatable}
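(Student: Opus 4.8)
The plan is to apply the double geodesic strong convexity of $\mathcal{C}$ at the midpoint of the minimizing geodesic from $x$ to the maximizer $v$, recognize the resulting feasible point as an image of the \eqref{eq:double_exponential_map}, linearize it using the hypothesis \eqref{eq:condition_exponential_operator_map}, and conclude from optimality of $v$. Concretely, fix $x\in\mathcal{C}$, a nonzero $w\in\TxM$, and $v\in\operatorname{argmax}_{z\in\mathcal{C}}\langle w;\exponinv{x}(z)\rangle_x$ (both sides of the \eqref{eq:scaling_inequality_riemannian} are positively homogeneous of degree one in $w$, so no normalization is lost). Let $\gamma:[0,1]\to\M$ be the geodesic with $\gamma(0)=x$, $\gamma(1)=v$, put $m:=\gamma(1/2)$ and $u:=\tfrac{1}{2}\exponinv{x}(v)\in\TxM$, so $\expon{x}(u)=m$. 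Since parallel transport is a linear isometry, $z^*:=\Gamma_x^m(w/\|w\|_x)$ is a unit vector of $T_m\M$, and $\tfrac{\alpha}{4}d^2(x,v)z^*$ has norm $\alpha\cdot\tfrac{1}{2}\cdot\tfrac{1}{2}\cdot d^2(x,v)$, exactly the radius allowed at $t=1/2$ by \cref{def:double_geodesic_strong_convexity}. Hence the point $p:=\expon{m}(\tfrac{\alpha}{4}d^2(x,v)z^*)$ exists and lies in $\mathcal{C}$.

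Next I rewrite $p$ through the double exponential map. Taking $\omega:=\tfrac{\alpha}{4}d^2(x,v)\,\tfrac{w}{\|w\|_x}\in\TxM$, one has $\Gamma_x^m\omega=\tfrac{\alpha}{4}d^2(x,v)z^*$, so $p=\expon{\expon{x}(u)}\big(\Gamma_x^{\expon{x}(u)}\omega\big)=\expon{x}(u,\omega)$ by \eqref{eq:double_exponential_map}, and the hypothesis \eqref{eq:condition_exponential_operator_map} turns this into $p=\expon{x}\big(h_x(u,\omega)\big)=\expon{x}(u+\omega)$. As $p\in\mathcal{C}$ and the restricted exponential of \cref{remark:inverse_exponential_map} is a bijection onto $\mathcal{C}$, this identifies
\[
    \exponinv{x}(p)=u+\omega=\tfrac{1}{2}\exponinv{x}(v)+\tfrac{\alpha}{4}d^2(x,v)\,\tfrac{w}{\|w\|_x}.
\]
Applying optimality of $v$ to the feasible point $p$ then gives $\langle w;\exponinv{x}(v)\rangle_x\ge\langle w;\exponinv{x}(p)\rangle_x=\tfrac{1}{2}\langle w;\exponinv{x}(v)\rangle_x+\tfrac{\alpha}{4}d^2(x,v)\|w\|_x$, i.e. $\langle w;\exponinv{x}(v)\rangle_x\ge\tfrac{\alpha}{2}d^2(x,v)\|w\|_x$. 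With $d=d_{\M}$, so that $d^2(x,v)=\|\exponinv{x}(v)\|_x^2$, this is the \eqref{eq:scaling_inequality_riemannian} with constant $\alpha/2$; for a general $d$ satisfying \cref{ass:equivalence_distance_exponential} one obtains instead the constant $\alpha\ell_{\M}^2/2$.

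\textbf{Main obstacle and remarks.} The only step that is not routine is the identification $\exponinv{x}(p)=u+\omega$: one must verify that $u+\omega$ lies in the domain $\mathcal{C}_x$ of \cref{remark:inverse_exponential_map}, i.e. that $t\mapsto\expon{x}(t(u+\omega))$ is the distance-minimizing geodesic inside $\mathcal{C}$ joining $x$ to $p$; this is exactly where unique geodesic convexity of $\mathcal{C}$ (with $x,p\in\mathcal{C}$) is used, while the norm bookkeeping, the isometry of $\Gamma$, and the closing algebra are immediate. I would also record that using an arbitrary $t\in[0,1)$ in place of $t=1/2$ yields $\langle w;\exponinv{x}(v)\rangle_x\ge\alpha\,t\,d^2(x,v)\|w\|_x$, so letting $t\to1$ sharpens the constant from $\alpha/2$ to $\alpha$; I keep $t=1/2$ because it is the choice that transfers verbatim to the residual bound of \cref{prop:implication_approximate_riemannian_scaling_inequality}.
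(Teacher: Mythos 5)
Your proof is correct and follows essentially the same route as the paper's: apply double geodesic strong convexity at the midpoint $t=1/2$, rewrite the resulting feasible point via the double exponential map, linearize with $h_x(u,\omega)=u+\omega$, and conclude from optimality of $v$ with the same constant $\alpha/2$ (the paper keeps a generic unit $z$ and maximizes at the end, which lands exactly on your choice $z^*=\Gamma_x^{\gamma_{x,v}(1/2)}(w/\|w\|_x)$). Your explicit remarks on the $d$ versus $d_{\mathcal{M}}$ bookkeeping and on checking that $u+\omega$ lies in the domain of the restricted logarithm are points the paper leaves implicit, but they do not change the argument.
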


\begin{proof}
Let us consider $x\in\mathcal{C}$, $w\in \TxM $, and $v\in\mathcal{C}$ s.t.
\[
v \in \underset{z\in\mathcal{C}}{\operatorname{argmax}} \langle w; \operatorname{Exp}^{-1}_x(z)\rangle_x.
\]
Then, by \eqref{eq:geodesic_strong_convexity_double_exponential}, which is equivalent to \cref{def:double_geodesic_strong_convexity} with $t=1/2$,  we have $\operatorname{Exp}_{\gamma_{x,v}(1/2)}\Big(\frac{\alpha}{4}d^2(x, v) z\Big)\in\mathcal{C}$, where $\gamma_{x,v}(\cdot)$ is the geodesic joining $x$, and $v$ and $z$ are a unit norm vector in $T_{\gamma_{x,v}(1/2)}\mathcal{M}$.
Then, by optimality of $v$, for all $z\in T_{\gamma_{x,v}(1/2)}\mathcal{M}$ with $\|z\|_{\gamma_{x,v}(1/2)}=1$, we have
\[
\Big\langle w; \operatorname{Exp}^{-1}_x(v)\Big\rangle_x \geq \Big\langle w; \operatorname{Exp}^{-1}_x\Big(\operatorname{Exp}_{\gamma_{x,v}(1/2)}\big(\frac{\alpha}{4}d^2(x, v) z\big)\Big)\Big\rangle_x.
\]
Let us first recall that, by definition of the exponential map, for the geodesic $\gamma_{x,v}$ and for all $t\in[0,1]$, we have $\operatorname{Exp}_x(t\gamma^\prime_{x,v}(0)) = \gamma_{x,v}(t)$ such that we can write 
\begin{equation}\label{eq:optimality_inequality}
\Big\langle w; \operatorname{Exp}^{-1}_x(v)\Big\rangle_x \geq \Big\langle w; \operatorname{Exp}^{-1}_x\Big(\operatorname{Exp}_{\operatorname{Exp}_x(\frac{1}{2}\gamma^\prime_{x,v}(0))}\big(\frac{\alpha}{4}d^2(x, v) z\big)\Big)\Big\rangle_x.
\end{equation}
We can hence write this in terms of the double exponential map and subsequently in terms of the exponential operator map. We use 
\[
\operatorname{Exp}_{\operatorname{Exp}_x(\frac{1}{2}\gamma^\prime_{x,v}(0))}\big(\frac{\alpha}{4}d^2(x, v) z\big) = \operatorname{Exp}_x\Big(\frac{1}{2}\gamma^\prime_{x,v}(0); \big(\Gamma_x^{\gamma_{x,v}(1/2)}\big)^{-1}(\frac{\alpha}{4}d^2(x,v)z) \Big).
\]
Hence, on using the assumption \eqref{eq:condition_exponential_operator_map} on the exponential operator, we have
\[
\operatorname{Exp}_{\operatorname{Exp}_x(\frac{1}{2}\gamma^\prime_{x,v}(0))}\big(\frac{\alpha}{4}d^2(x, v) z\big) = \operatorname{Exp}_x\Big(\frac{1}{2}\gamma^\prime_{x,v}(0) + \big(\Gamma_x^{\gamma_{x,v}(1/2)}\big)^{-1}(\frac{\alpha}{4}d^2(x,v)z) \Big).
\]
When plugging the last equality in \eqref{eq:optimality_inequality}, we obtain
\[
\Big\langle w; \operatorname{Exp}^{-1}_x(v)\Big\rangle_x \geq \frac{1}{2} \Big\langle w; \gamma^\prime_{x,v}(0)\Big\rangle_x + \Big\langle w; \big(\Gamma_x^{\gamma_{x,v}(1/2)}\big)^{-1}(\frac{\alpha}{4}d^2(x,v)z) \Big\rangle_x.
\]
It should be noted that $\gamma^\prime_{x,v}(0)= \operatorname{Exp}^{-1}_x(v)$, such that
\[
\Big\langle w; \operatorname{Exp}^{-1}_x(v)\Big\rangle_x \geq 2\Big\langle w; \big(\Gamma_x^{\gamma_{x,v}(1/2)}\big)^{-1}(\frac{\alpha}{4}d^2(x,v)z) \Big\rangle_x.
\]
Wth $\Big(\Gamma_x^y\Big)^{-1} = \Gamma_y^x$ and the isometry property of $\Gamma_x^{\gamma_{x,v}(1/2)}$, we have
\begin{eqnarray}
\Big\langle w; \Gamma_{\gamma_{x,v}(1/2)}^x \big(\frac{\alpha}{4}d^2(x,v)z\big) \Big\rangle_x & = & \Big\langle \Gamma_{\gamma_{x,v}(1/2)}^{x}\Gamma_{x}^{\gamma_{x,v}(1/2)} w ; \Gamma_{\gamma_{x,v}(1/2)}^x \big(\frac{\alpha}{4}d^2(x,v)z\big) \Big\rangle_x\\
& =& \Big\langle\Gamma_{x}^{\gamma_{x,v}(1/2)} w; \frac{\alpha}{4}d^2(x,v)z \Big\rangle_{\gamma_{x,v}(1/2)}.
\end{eqnarray}
Hence, for all $z$ of a unit norm in $T_{\gamma_{x,v}(1/2)}\mathcal{M}$, we obtain
\begin{eqnarray}
\Big\langle w; \operatorname{Exp}^{-1}_x(v)\Big\rangle_x &\geq& 2\Big\langle \Gamma_x^{\gamma_{x,v}(1/2)} w; \frac{\alpha}{4}d^2(x,v)z \Big\rangle_{\gamma_{x,v(1/2)}} \\
&=& \frac{\alpha}{2} d^2(x,v)  \Big\langle \Gamma_x^{\gamma_{x,v}(1/2)} w; z \Big\rangle_{\gamma_{x,v(1/2)}}.
\end{eqnarray}
Furthermore, by maximizing over $z$, for the best $z^*$, we obtain
\[
\Big\langle \Gamma_x^{\gamma_{x,v}(1/2)} w; z^* \Big\rangle_{\gamma_{x,v(1/2)}} = \|\Gamma_x^{\gamma_{x,v}(1/2)} w\|_{\gamma_{x,v}(1/2)}.
\] 
Then, because the parallel transport $\Gamma_x^{\gamma_{x,v}(1/2)}$ is an isometry, we finally have
\[
\Big\langle w; \operatorname{Exp}^{-1}_x(v)\Big\rangle_x \geq \frac{\alpha}{2} d^2(x,v) \|\Gamma_x^{\gamma_{x,v}(1/2)} \nabla f(x)\|_{\gamma_{x,v}(1/2)} = \frac{\alpha}{2} d^2(x,v) \|w\|_x.
\]
\end{proof}

\subsection{Proof of Smoothness Property Lemma} \label{ap:proofs_level_set}

First, we introduce the Fenchel conjugate of a function defined on a manifold.

\begin{definition} \citep{bergmann2021fenchel}
   Let us Suppose that $f:\mathcal{C}\rightarrow \mathbb{R}$, where $\mathcal{C}\subset \mathcal{M}$ is a strictly convex set, where $\mathcal{M}$ is a Cartan--Hadamard manifold. For $m\in\mathcal{M}$, the $m$-Fenchel conjugate of $f$ is defined as the function $f_m^*:T_m^*\mathcal{M}\rightarrow \mathbb{R}$ such that
    \begin{equation}\label{eq:Riemannian_conjugate}
        f_m^*(\xi_m) := \sup_{x\in T_m\mathcal{M}}\left\{ \langle \xi_m,\, x \rangle - f\left( \operatorname{Exp}_m x \right) \right\},
    \end{equation}
    where $T_m^*\mathcal{M}$ is the cotangent bundle of $T_m\mathcal{M}$
\end{definition}
In particular, we need the following property.
\begin{lemma}\cite[lem. 3.7]{bergmann2021fenchel} \label{lem:fenchel_dual_inequality}
   Let us suppose that $f,\tilde f:\mathcal{C}\rightarrow \mathbb{R}$ are proper functions, where $\mathcal{C}\subset \mathcal{M}$ is a strictly convex set, and $\mathcal{M}$ is a Cartan--Hadamard manifold, and let $m\in\mathcal{C}$. Then,
    \begin{equation}
        \text{if } f(p) \leq \tilde f(p) \;\;\forall \;p\in\mathcal{C}, \text{ then } f^*_m(\xi_m) \geq \tilde f^*_m(\xi_m) \;\; \forall \; \xi_m \in T_m^*\mathcal{M}.
    \end{equation}
\end{lemma}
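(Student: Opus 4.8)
The plan is to exploit the monotonicity of the supremum directly from the pointwise hypothesis, mirroring the classical Euclidean argument for the order-reversing property of the Fenchel conjugate; the only real care needed is bookkeeping about the common effective domain induced by the exponential map. Since the $m$-Fenchel conjugate in \eqref{eq:Riemannian_conjugate} is by definition a supremum of an affine-in-$\xi_m$ expression minus the function value pulled back through $\operatorname{Exp}_m$, an inequality between the functions should transfer, with reversed direction, to an inequality between their conjugates.

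First I would fix an arbitrary covector $\xi_m \in T_m^*\mathcal{M}$ and compare the two supremands appearing in \eqref{eq:Riemannian_conjugate}, namely $x \mapsto \langle \xi_m,\, x\rangle - f(\operatorname{Exp}_m x)$ and $x \mapsto \langle \xi_m,\, x\rangle - \tilde f(\operatorname{Exp}_m x)$. For each $x \in T_m\mathcal{M}$ with $\operatorname{Exp}_m x \in \mathcal{C}$, setting $p := \operatorname{Exp}_m x \in \mathcal{C}$ and invoking the hypothesis $f(p) \leq \tilde f(p)$ gives $-f(\operatorname{Exp}_m x) \geq -\tilde f(\operatorname{Exp}_m x)$. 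Adding the common term $\langle \xi_m,\, x\rangle$ to both sides preserves the inequality, so the supremand associated with $f$ dominates the one associated with $\tilde f$ at every admissible $x$.

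Next I would take the supremum over $x$ on both sides. Because $f$ and $\tilde f$ share the domain $\mathcal{C}$, both conjugates are computed as suprema over the same set of admissible tangent vectors (equivalently, one extends $f$ and $\tilde f$ by $+\infty$ outside $\mathcal{C}$ so the supremum ranges over all of $T_m\mathcal{M}$ without altering its value). Monotonicity of the supremum — if $g(x) \geq h(x)$ pointwise then $\sup_x g(x) \geq \sup_x h(x)$ — then yields $f_m^*(\xi_m) \geq \tilde f_m^*(\xi_m)$. As $\xi_m$ was arbitrary, the inequality holds for all $\xi_m \in T_m^*\mathcal{M}$, which is the claim.

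The step I expect to demand the most attention is not analytic but the domain matching: verifying that both conjugates are genuinely suprema over the same set of directions, so that the pointwise comparison of supremands legitimately passes to a comparison of suprema. This is guaranteed because both functions have domain $\mathcal{C}$ and because the Cartan--Hadamard assumption makes $\operatorname{Exp}_m$ a diffeomorphism onto $\mathcal{M}$, giving a clean bijection $x \leftrightarrow \operatorname{Exp}_m x$ between admissible tangent vectors and points of $\mathcal{C}$; the properness hypothesis rules out the degenerate situation in which a supremand is identically $-\infty$ and the comparison becomes vacuous.
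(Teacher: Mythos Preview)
Your argument is correct and is exactly the standard order-reversing property of the Fenchel conjugate transplanted to the Riemannian setting: pointwise comparison of supremands followed by monotonicity of the supremum. The domain-matching remark is the only place where the Riemannian wrapper could cause trouble, and you handle it appropriately by noting that both conjugates are suprema over the same set of tangent vectors (those with $\operatorname{Exp}_m x \in \mathcal{C}$, or equivalently all of $T_m\mathcal{M}$ after extending by $+\infty$).

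Note, however, that the paper does not supply its own proof of this lemma: it is quoted verbatim as \cite[Lemma~3.7]{bergmann2021fenchel} and used as a black box in the proof of \cref{lem:L_smoothness_bound}. So there is no ``paper's proof'' to compare against; your write-up simply fills in the elementary argument that the cited reference contains.
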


We are now ready to prove \cref{lem:L_smoothness_bound}.

\Lsmoothnessbound*
\begin{proof}
As $f$ is a geodesically smooth function, for all $p\in\mathcal{C}$, we have $f(p)\leq \tilde f(p)$, where
\[
    \tilde f(p) = f(x) + \langle \nabla f(x),\, \operatorname{Exp}_x^{-1}(p)\rangle + \frac{L}{2} d^2(x,p), \quad \forall p\in\mathcal{C}.
\]
Therefore, according to \cref{lem:fenchel_dual_inequality}, we have $f_m^*(\xi_m) \geq \tilde f_m^*(\xi_m)$, for all $m\in\mathcal{M}$. Using the definition \eqref{eq:Riemannian_conjugate},
\begin{align*}
    \tilde f_m^*(\xi_m) & = \sup_{Z\in T_m\mathcal{M}}  \langle \xi_m,\, Z \rangle_x - \tilde f(\operatorname{Exp}_m(Z))
\end{align*}
In the particular case wherein $m=x$, we obtain
\begin{align*}
    \tilde f_x^*(\xi_x) & = \sup_{Z\in T_m\mathcal{M}} \langle \xi_x,\, Z \rangle - f(x) - \langle \nabla f(x),\, Z\rangle_x - \frac{L}{2} \|Z\|_x^2 \\
    & = - f(x) + \frac{1}{2L}\|\xi_x-\nabla f(x)\|_x^2
\end{align*}
Therefore, the inequality $f^*_x(\xi_x) \geq  \tilde f_x^*(\xi_x)$ can be written as
\[
     \frac{1}{2L}\|\xi_x-\nabla f(x)\|_x^2 \leq f^*_x(\xi_x) + f(x).
\]
In particular, at $\xi_x = 0$, we have the desired result, as
\[
    f^*_x(0) = \sup_{Z\in T_m\mathcal{M}}  \langle 0,\, Z \rangle_x - f(\operatorname{Exp}_x(Z)) = \sup_{Z\in T_m\mathcal{M}}  - f(\operatorname{Exp}_x(Z)) = -\inf_{z\in \mathcal{C}}  f(z) = -f^*.
\]
\end{proof}

\clearpage
\section{MATLAB code for the experiments}

\subsection{Subroutine: Linear Max Oracle For a Ball In a Sphere}

\begin{lstlisting}
function v = linear_max_oracle(w, x, R, x0, manifold)

% Solve the problem
% max w^Tz : z \in exp_x^{-1}(C),
% where C := {x:dist(x,x_0)\leq R}
% Assume that the sphere has radius = 1!

% Define the two basis vector of the subspace u_1 and u_2
u1 = w;
u1 = u1/norm(u1);
u2 = manifold.log(x, x0);
u2 = u2-u1*(u1'*u2);

if norm(u2) > 0
    u2 = u2/norm(u2);
    p = @(phi) cos(phi)*u1 + sin(phi)*u2;

    alpha = @(p) solve_sincoseq(x0'*x,x0'*p,(1-2*sin(R/2)^2));

    fhandle = @(phi) -(alpha(p(phi))*cos(phi)); %same minimum since w*u2 = 0.
    options = optimset('TolX',eps);
    [best_phi] = fminbnd(fhandle, -pi, pi, options);
    v = manifold.exp(x, p(best_phi), alpha(p(best_phi)));
else
    alpha = solve_sincoseq(x0'*x,x0'*u1,(1-2*sin(R/2)^2)); % best direction: u1
    v = manifold.exp(x, u1, alpha);
end
\end{lstlisting}

\clearpage

\subsection{Sample Script: Random Quadratic Over a Sphere With Ball Constraint}
~\nocite{boumal2014manopt}
\begin{lstlisting}
% Generate the problem data.
d = 50;
n = 25; % Slower convergence when n < d -> non-strongly convex case
nIter = 500;
dualgap =  zeros(1, nIter);

% Manifold: this code uses manop, see https://www.manopt.org
manifold = spherefactory(d); 

% Define the problem cost function and its derivatives.
A = randn(n,d);
A = A'*A;
A = A/norm(A);
xstar = manifold.rand();
f = @(x) 0.5*(x-xstar)'*A*(x-xstar);
mgrad = @(x) manifold.egrad2rgrad(x, A*(x-xstar));
L = norm(A); % Worst case

% Create the problem set
x_center = manifold.rand();
radius_ratio = 0.9; % <1: xstar is oustide
radius_max = manifold.dist(x_center,xstar)*radius_ratio;
setFunction = @(x) manifold.dist(x_center, x)<=radius_max;

x = x_center;
% Main loop RFW
for i=1:(nIter-1)
    gradx = mgrad(x);
    v = linear_max_oracle(-gradx, x, radius_max, x_center, manifold);
    dualgap(i) = -manifold.inner(x, gradx, manifold.log(x, v));
    
    step_size = -manifold.inner(x, gradx, manifold.log(x, v)) / (L*manifold.dist(x, v)^2);
    step_size = min(step_size, 1);
    x = manifold.exp(x, manifold.log(x, v), step_size);
end
v = linear_max_oracle(-gradx, x, radius_max, x_center, manifold);
dualgap(end) = -manifold.inner(x, gradx, manifold.log(x, v));

% Max at eps, otherwise the result is numerically meaningless
semilogy(1:length(dualgap), max(dualgap,eps))
legend({'FW Dual Gap'})
\end{lstlisting}

\end{document}